\DeclareMathAlphabet{\mathpzc}{OT1}{pzc}{m}{it}
\numberwithin{equation}{section}
\theoremstyle{plain}
\newtheorem{thm}{Theorem}[section]
\newtheorem{lem}[thm]{Lemma}
\newtheorem{cor}[thm]{Corollary}
\newtheorem{prop}[thm]{Proposition}
\theoremstyle{definition}
\newtheorem{defn}{Definition}[section]
\newtheorem{exam}[thm]{Example}
\newtheorem{ntz}{Notation}[section]
\newtheorem{rmk}[thm]{Remark}
\newcommand\Lf{\mathbf{L}}
\newcommand\Ff{\mathbf{F}}
\newcommand\et{\mathfrak{e}}
\DeclareMathAlphabet{\mathpzc}{OT1}{pzc}{m}{it}
\DeclareMathOperator{\R}{{\mathbb{R}}}
\DeclareMathOperator{\Ub}{\mathbf{U}}
\DeclareMathOperator{\cC}{{\mathcal{C}}}
\DeclareMathOperator{\kt}{{\kappaup}}
\DeclareMathOperator{\Li}{\mathfrak{L}}
\DeclareMathOperator{\nil}{\mathrm{nil}}
\DeclareMathOperator{\rad}{\mathrm{rad}}
\DeclareMathOperator{\radn}{\mathrm{rad}_n}
\DeclareMathOperator{\Gf}{\mathbf{G}}
\DeclareMathOperator{\nr}{\mathfrak{n}_{{\kt}}}
\DeclareMathOperator{\ad}{\mathrm{ad}}
\DeclareMathOperator{\Ad}{\mathrm{Ad}}
\DeclareMathOperator{\Lie}{\mathrm{Lie}}
\DeclareMathOperator{\Ct}{\mathrm{C}}
\DeclareMathOperator{\zt}{\mathfrak{z}}
\DeclareMathOperator{\st}{\mathfrak{s}}
\DeclareMathOperator{\at}{\mathfrak{a}}
\DeclareMathOperator{\pt}{\mathfrak{p}}
\DeclareMathOperator{\ft}{\mathfrak{f}}
\DeclareMathOperator{\re}{{\mathrm{Re}}}
\DeclareMathOperator{\C}{\mathbb{C}}
\DeclareMathOperator{\dist}{{\mathrm{dist}}}
\newcommand\Kf{\mathbf{K}}
\newcommand\Qf{\mathbf{Q}}
\newcommand\GL{\mathbf{GL}}
\newcommand\wt{\mathfrak{w}}
\newcommand\qt{\mathfrak{q}}
\newcommand\Rad{\mathpzc{R}}
\newcommand\hg{\mathfrak{h}}
\newcommand\If{\mathbf{V}}
\newcommand\vt{\mathfrak{v}}
\newcommand\eg{\mathfrak{e}}
\newcommand\Vf{\mathbf{V}}
\newcommand\lt{\textswab{l}}
\newcommand\mt{\textswab{m}}
\newcommand\Zf{\mathpzc{Z}}
 \newcommand\op{\mathrm{p}_0}
\newcommand\Wf{\mathbf{W}}
\newcommand\SL{\mathbf{SL}}
\newcommand\SU{\mathbf{SU}}
\newcommand\CP{\mathbb{CP}}
\newcommand\Gr{\mathpzc{Gr}}
\newcommand\Fi{\mathpzc{F}}
\newcommand\Xg{\mathfrak{X}}
\newcommand\Sb{\mathbf{S}}
\newcommand\Id{\mathrm{I}}
\newcommand\Ci{\mathcal{C}^\infty}
\newcommand\frm{\mathbf{b}}
\newcommand\Nf{\mathbf{N}}
\newcommand\nt{\mathfrak{n}}
\newcommand\ut{\mathfrak{u}}
\newcommand\po{\mathfrak{p}_0}
\newcommand\Pf{\mathpzc{P}}
\newcommand\Pfo{\mathpzc{P}_0}
\newcommand\Mf{\mathpzc{M}}
\newcommand\trac{\mathrm{trace}}
\newcommand{\Ef}{\mathbf{E}}
\newcommand\slt{\mathfrak{sl}}
\newcommand\su{\mathfrak{su}}
\newcommand\Jf{\mathpzc{J}}
\newcommand\Pp{\mathfrak{P}}
\newcommand\td{\mathfrak{t}}
\newcommand\Cf{\mathbf{C}}
\newcommand\Ml{\mathpzc{M}}
\newcommand\cg{\mathfrak{c}}
\newcommand\Af{\mathbf{A}}
\newcommand\HNR{$\mathrm{HNR}$}
\newcommand\kll{\mathpzc{k}}
\newcommand\Tb{\mathbf{T}}
\newcommand\trasp{{{^T}\!}}
   \def\DHLhksqrt#1#2{\setbox0=\hbox{$#1\sqrt{#2\,}$}\dimen0=\ht0
     \advance\dimen0-0.2\ht0
     \setbox2=\hbox{\vrule height\ht0 depth -\dimen0}%
     {\box0\lower0.4pt\box2}}
\title[Mostow's fibration and $CR$ manifolds]{Mostow's Fibration for 
canonical embeddings of compact homogeneous
$CR$ manifolds}
\author[S.Marini]{Stefano Marini}
\address{S.~Marini: Dipartimento di Matematica e Fisica, 
III Universit\`a di Roma, 
Largo San Leonardo Murialdo, 1
00146 Roma (Italy) }
\email{marinistefano86@gmail.com}
\author[M.~Nacinovich]{Mauro Nacinovich}
\address{M.Nacinovich:
Dipartimento di Matematica\\ II Universit\`a di Roma
``Tor Ver\-ga\-ta''\\ Via della Ricerca Scientifica\\ 00133 Roma
(Italy)}
\email{nacinovi@mat.uniroma2.it}
\date\today
\subjclass[2000]{Primary: 32V30
Secondary: 32V25, 32V35,  
 53C30}
\keywords{homogeneous $CR$-manifold, $CR$-embedding, 
Mostow fibration, Matsuki duality, 
tangential Cauchy-Riemann complex, Dolbeault cohomology}
\begin{document}
\begin{abstract}
We define a class of compact homogeneous $CR$ manifolds 
which are bases of Mostow fibrations having total spaces equal to their canonical
complex realizations and Hermitian fibers. This is used to establish isomorphisms
between their tangential Cauchy-Riemann cohomology groups and the corresponding
Dolbeault cohomology groups of the embeddings.
\end{abstract}
\maketitle

\tableofcontents
\section{Introduction and preliminaries}
The aim of this paper
is to investigate relations between the cohomology groups of the
tangential Cauchy Riemann complexes  
of $\nt$-reductive compact homogeneous $CR$ manifolds and the corresponding Dolbeault
cohomology groups  of their canonical embeddings. The class of 
$\nt$-reductive compact homogeneous $CR$
manifolds
was introduced  
in \cite{AMN2013}: its objects are the minimal orbits, in homogeneous spaces of 
reductive complex groups, of their compact forms. 
\par 
Results on the cohomology of the tangential $CR$ complexes on 
general compact $CR$ manifolds 
of arbitrary codimension were
obtained in \cite{40} (see also \cite{BHN2016}), under suitable $r$-pseu\-do\-con\-cavity conditions,
involving their \textit{scalar Levi forms}, 
that were first introduced in \cite{14, 23}. In this paper we will restrain to the homogeneous case.
\par 
The $CR$ structure of a homogeneous $CR$ manifold $M_0$ is efficiently described by 
considering  its 
\textit{$CR$ algebra} at any point
$p_0\in{M}_0$: it is the pair $(\kt_0,\vt)$ consisting 
of the real Lie algebra $\kt_0$ 
of its transitive group $\Kf_0$ of $CR$-automorphisms and of the subspace
$\vt=d\pi^{-1}(T^{0,1}_{p_0}M_0)$ of the complexification $\kt$ of $\kt_0$ (see \cite{MN05}).
The \textit{formal integrability} of the partial complex structure $T^{0,1}M_0$ 
of $M_0$ is  equivalent to
the fact that $\vt$ is a complex Lie subalgebra of~$\kt.$ The intersection $\vt\cap\bar{\vt}$
(conjugation is taken with respect to the real form $\kt_0$) is the complexification of the Lie algebra
of the stabilizer of $p_0$ in $\Kf_0$ and the quotient $\vt/(\vt\cap\bar{\vt})$ represents the
space $T^{0,1}_{p_0}M_0$ of \textit{anti-holomorphic}
complex tangent vectors at $p_0.$ \par 
We call $\nt$-reductive a homogeneous $CR$ manifolds for which 
\mbox{$\vt=(\vt\cap\bar{\vt})\oplus\nt(\vt),$} i.e. for which $T^{0,1}_{p_0}M_0$ can be identified to
the nilradical of~$\vt.$  It was shown in \cite{AMN2013} that the intersection
of any pair of
Matsuki-dual orbits in a complex flag manifold
$M,$ with the $CR$ structure inherited from $M,$ 
is an $\nt$-reductive
compact homogeneous $CR$ manifold. 
Moreover, when $M_0$ is $\nt$-reductive,
$\vt$ is the Lie algebra of a closed complex Lie subgroup $\Vf$ of $\Kf$ that contains the
stabilizer of $p_0$ as  its maximal compact subgroup, so that $M_0=\Kf_0/\Vf_0 \hookrightarrow
M_-=\Kf/\Vf$ is a generic $CR$-embedding. 
Vice versa, if $M_-$ is a $\Kf$-homogeneous complex
algebraic manifold, then
 a minimal $\Kf_0$-orbit $M_0$ in $M_-$ is an 
$\nt$-reductive compact homogeneous $CR$ manifold.
\par
Since $\Kf_0$ is a maximal compact subgroup of  a
linear algebraic complex group $\Kf,$ the quasi-projective 
manifold 
$M_-$ can be viewed as a $\Kf_0$-equivariant fiber bundle on the basis
$M_0$ (see \cite{Most62}).
We use this Mostow fibration of $M_-$ onto $M_0$ to construct a nonnegative 
\textit{smooth}
exhaustion
$\phiup$ of $M_-$, with $\phiup^{-1}(0)=M_0,$ to relate the Dolbeault cohomology of $M_-$ to
the cohomology of the tangential $CR$-complex on $M_0.$
 This requires some precision on the structure of the fibers and forces us to introduce a further
 requirement on the $CR$ algebra $(\kt_0,\vt),$ namely to ask that, if
 $\wt$ is the largest complex subalgebra of $\kt$ with \mbox{$\vt\subset\wt\subset(\vt+\bar{\vt}),$}
 (see \cite[Theorem 5.4]{MN05}), then 
 $\nt(\wt)$ is the nilradical of a parabolic subalgebra of $\kt.$ 
 This condition is satisfied 
 in many examples coming from Matsuki duality (cf. \cite{Mats88})
 and can always be satisfied by 
 \textit{strengthening} the $CR$ structure of 
 an $\nt$-reductive $M_0.$ \par
 When we drop this extra assumption, we are still able to construct a 
 \textit{continuous} exhaustion, which, when $M_0$ is $r$-pseudoconcave, 
is still strictly $r$-pseudoconcave, 
 allowing us to obtain results on the first $(r-1)$ 
 tangential Cauchy Riemann and Dolbeault
 cohomology groups of $M_0$ and $M_-$ 
 (or up to $(r-\mathrm{hd}(\mathcal{F})-1)$ 
 if we discuss cohomology with coefficients in a coherent
 sheaf $\mathcal{F}$).\par 
 Earlier versions of some results proved here  
were discussed in \cite{Marini,
MaNa1}.
 \par \vspace{3pt}
 The paper is organized as follows. \par 
 In \S\ref{sc4} we discuss some basic facts on 
 $\nt$-reductive $CR$ manifolds.
 We skip from basic stuff  on $CR$ manifolds and $CR$ algebras, for which we 
 refer, e.g.,  to \cite{40, MN05}, and only explain those special features which are necessary 
 for the developments of the next sections.
 \par 
Cartan and Mostow fibrations are related to the structure of  
negatively curved 
Riemannian symmetric
space 
of the set of Hermitian  symmetric matrices
with determinant one. Hence we 
found convenient to
discuss  in
\S\ref{sec3}, as a preliminary, some topics of the geometry of 
$\SL_n(\C)/\SU(n).$ 
\par 
In   \S\ref{hermitianfiber} we study decompositions of $\Kf$ 
with Hermitian fibers.  \par
Example~\ref{esampduesette}
shows that a $\Kf_0$-equivariant fibration of $M_-$ 
with Hermitian fibers,
as in \cite{Most55}, is not always possible.
In \S\ref{sec5} we  
describe the general structure of the fibers. To this aim,  
we consider a class of parabolic subalgebras associated to the pair
$(\kt_0,\vt)$ 
 and find a condition, 
that we call {\HNR} from \textit{horocyclic
nilradical}, under which we get
a Mostow fibration of $M_-$ with Hermitian fibers.
 \par
In the final section \S\ref{cohomo} we apply these results  to construct
an exhaustion function which permits to relate some cohomology groups of
the tangential 
$CR$ complexes on $M_0$ 
to the corresponding cohomology groups of the
Dolbeault complexes on $M_-$ and analogous results for \v{C}ech cohomology with
coefficients in a coherent sheaf.
We conclude with 
the study of an example of a family of intersections of Matsuki-dual orbits
and an application of \S\ref{hermitianfiber} to obtain a pseudoconcavity result for 
which we do not require the validity of the {\HNR} assumption.
\par

\section{Compact homogeneous \texorpdfstring{$CR$}{TEXT} 
manifolds and 
\texorpdfstring{$\mathfrak{n}$}{TEXT}-reductiveness} \label{sc4}
In this section we introduce the class of homogeneous $CR$ manifold which is
the object of this investigation. We found convenient to recall, in an
initial  short subsection,
the definition of reductive Lie group, as it is not completely standard  in the literature.
\subsection{Reductive Lie groups}
We call \emph{reductive} a Lie algebra $\kt$ whose radical is abelian:
its commutator subalgebra 
$[\kt,\kt]$ is its semisimple ideal and its radical $\at$ equals its center (see \cite{bour1}).\par
Reductive $\kt$'s are characterized by having faithful semisimple representations.\par
An involution $\thetaup$ on a Lie algebra $\kt$ yields a direct sum decomposition
\begin{equation*} \kt=\kt_0\oplus\po,\;\;\text{with}\;\;
 \kt_0=\{X\in\kt\mid \thetaup(X)=X\},\;\; 
  \po=\{X\in\kt\mid \thetaup(X)=-X\}.
\end{equation*}
\par
A  Lie group $\Kf$ is \emph{reductive} (see \cite{Kn:2002})
if its Lie algebra $\kt$ is reductive and, moreover,
there are  an involution $\thetaup$ and an invariant bilinear form $\frm$ on 
$\kt$ such that 
\begin{enumerate}
  \item[$(i)$] $\kt_0\perp\po$ for $\frm$;
\item[$(ii)$] $\frm<0$ on $\kt_0$ and $\frm>0$ on $\po$;
 \item[$(iii)$] $\kt_0$ is the Lie algebra of a compact subgroup $\Kf_0$ of $\Kf$ and
 \begin{equation}\label{r3.1}
 \Kf_0\times\po\ni (x,X)\longrightarrow x\cdot\exp(X)\in\Kf\end{equation}
 is a diffeomorphism onto;
 \item[$(iv)$] every automorphism $\Ad(x)$ of the complexification $\kt^{\C}$ of $\kt$, with $x\in\Kf$,
 is inner, i.e. belongs to the analytic subgroup of the automorphis group of $\kt^{\C}$ having
 Lie algebra $\ad(\kt)$.
\end{enumerate}
Then: $\thetaup$ is a \emph{Cartan involution}, 
$\kt=\kt_0\oplus\po$ and \eqref{r3.1} are
\emph{Cartan decompositions}, $\Kf_0$ is the associated \emph{maximal compact subgroup},
$\frm$ is the \emph{invariant bilinear form}.
The maximal compact subgroup $\Kf_0$ of $\Kf$ 
intersects all connected component of $\Kf$ (see \cite[Proposition 7.19]{Kn:2002}).
In particular, $\Kf$ has finitely many connected components.
\subsection{Splittable Lie subalgebras 
}

Let ${\kt}$ be a reductive complex Lie algebra, and 
\begin{equation*}
  {\kt}=\mathfrak{z}\oplus\mathfrak{s},\quad\text{with}\quad
\mathfrak{z}=\{X\in\kt\mid [X,{\kt}]=\{0\}\},\;\;
\mathfrak{s}=[{\kt},{\kt}]
\end{equation*}
its decomposition into the direct sum of its center and its semisimple
ideal. 
An element $X$ of $\kt$ is 
\emph{semisimple} if
$\mathrm{ad}(X)$ is a semisimple derivation of ${\kt}$, 
and \emph{nilpotent} if $X\in\mathfrak{s}$ and $\mathrm{ad}(X)$ is
nilpotent.\par 
An equivalent formulation is obtained by
considering a faithful matrix representation of $\kt$ in
which the elements of $\mathfrak{z}$ are diagonal: then semisimple and nilpotent
elements correspond to semisimple and nilpotent matrices, respectively.
\par
Each $X\in{\kt}$ admits a unique Jordan-Chevalley decomposition
\begin{equation*}
  X=X_s+X_n,\quad \text{with $X_s$ semisimple, $X_n$ nilpotent, and
$[X_s,X_n]=0$.}
\end{equation*}
A Lie subalgebra $\mathfrak{v}$ of $\kt$ is \emph{splittable} if, 
for each $X\in\mathfrak{v}$,
both $X_s$ and $X_n$ belong to~$\mathfrak{v}$. \par
If $\mathfrak{v}$ is a Lie subalgebra of ${\kt}$, 
the set
\begin{equation*}
\nr(\mathfrak{v})=\{ X\in\rad(\mathfrak{v})\mid X\; \mathrm{is\; nilpotent} \}
\end{equation*}
is 
a nilpotent ideal  of $\mathfrak{v}$, 
with
\begin{equation*}
  \radn(\mathfrak{v})=\rad(\mathfrak{v})\cap [\mathfrak{v},\mathfrak{v}]
\subset \nr(\mathfrak{v})\subset\nil(\mathfrak{v}),
\end{equation*}
where $\nil(\mathfrak{v})$ is the nilradical, i.e. the maximal nilpotent ideal
of $\mathfrak{v}$, and $\radn(\mathfrak{v})$ its nilpotent radical, i.e. the 
intersection of the kernels of all irreducible finite dimensional
linear representations of
$\mathfrak{v}$. Note that the nilpotent ideal $\nr(\mathfrak{v})$, unlike
$\nil(\mathfrak{v})$ and $\radn(\mathfrak{v})$, depends on the inclusion
$\mathfrak{v}\subset{\kt}$ (cf. \cite[\S 5.3]{Bou75}). 
We recall 
\begin{prop}[{see \cite[\S 5.4]{Bou75}}]
 Every splittable Lie subalgebra $\mathfrak{v}$ 
admits a Levi-Chevalley decomposition 
\begin{equation}\label{eq:42}
 \mathfrak{v}=\nr(\mathfrak{v})\oplus\vt_r,
\end{equation}
with $\vt_r$ reductive  and uniquely determined modulo
conjugation by elementary automorphisms of $\mathfrak{v}$, i.e. finite products of
automorphisms of the form $\exp(\ad(X))$, with $X\in\mathfrak{v}$ and nilpotent.\qed
\end{prop}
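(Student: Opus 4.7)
The plan is to reduce the statement to two familiar structural results for finite-dimensional Lie algebras: the Levi--Malcev theorem for the radical, and the conjugacy of maximal toral (i.e.\ semisimple abelian) subalgebras of a solvable splittable Lie algebra. The overall strategy is first to peel off a semisimple Levi complement of $\rad(\vt)$ in $\vt$, then to analyse $\rad(\vt)$ itself using the splittability hypothesis, and finally to arrange the two pieces so that they centralize each other, yielding a reductive summand complementary to $\nr(\vt)$.

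First I would apply Levi--Malcev inside $\vt$ to write $\vt=\rad(\vt)\oplus\ig$ with $\ig$ semisimple, unique up to conjugation by elementary automorphisms of $\vt$. Next I would use splittability to prove that $\rad(\vt)$ itself admits a decomposition $\rad(\vt)=\nr(\vt)\oplus\td$, where $\td$ is an abelian subalgebra of semisimple elements: indeed, for any $X\in\rad(\vt)$ the Jordan--Chevalley decomposition $X=X_s+X_n$ satisfies $X_s,X_n\in\vt$ by assumption, and since $\rad(\vt)$ is a characteristic ideal (its elements being recognized by solvability of the ideal they generate) both $X_s$ and $X_n$ remain in $\rad(\vt)$; the span of the $X_n$'s gives $\nr(\vt)$, and a maximal subalgebra of commuting semisimple elements furnishes the complementary torus $\td$. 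The Fitting/weight decomposition of $\rad(\vt)$ under $\ad(\td)$, which has only weight $0$ eigenvectors in the semisimple part, shows the sum is direct and $\nr(\vt)$ is an ideal.

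The next step, and the main technical point, is to promote $\td$ to a torus that centralizes the chosen Levi $\ig$: since $\ig$ acts on $\rad(\vt)$ by derivations preserving the Jordan decomposition, it stabilises $\nr(\vt)$, and by a cohomology-vanishing argument (Whitehead's lemma for the semisimple $\ig$ acting on the nilpotent $\nr(\vt)$) one can modify $\td$ by conjugation with an element of $\exp(\ad\nr(\vt))$ so that $[\td,\ig]=0$. Setting $\vt_r:=\td\oplus\ig$ then yields a reductive subalgebra of $\vt$ with $\vt=\nr(\vt)\oplus\vt_r$, establishing existence.

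For uniqueness modulo elementary automorphisms, I would combine two conjugacy theorems: any two Levi complements of $\rad(\vt)$ in $\vt$ are conjugate under $\exp(\ad\,\nr(\vt))$ (Malcev), and any two maximal tori of semisimple elements in the splittable solvable algebra $\rad(\vt)$ are conjugate under $\exp(\ad\,\nr(\vt))$ as well (Mostow's theorem on solvable splittable Lie algebras). Composing these two conjugations, and noting that $\exp(\ad X)$ for $X\in\nr(\vt)$ is an elementary automorphism of $\vt$, one transports any second decomposition $\vt=\nr(\vt)\oplus\vt_r'$ onto $\nr(\vt)\oplus\vt_r$. The hardest step I foresee is the centralising adjustment of $\td$ against $\ig$: it is the place where one really needs both the splittability of $\vt$ inside the ambient reductive $\kt$ and the semisimplicity of $\ig$, and where a direct computational attack would obscure what is ultimately a clean Whitehead-lemma argument.
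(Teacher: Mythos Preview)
The paper does not give a proof of this proposition at all: it is stated with a reference to Bourbaki and closed with a \qed{} immediately after the statement, so there is nothing in the paper to compare your argument against.

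Your outline is the standard strategy one finds in Bourbaki's treatment (Levi--Malcev for the semisimple piece, existence and conjugacy of maximal abelian subalgebras of semisimple elements inside the splittable solvable radical, then a compatibility step). One point deserves more care than your sketch suggests. In the centralising step you want to replace the torus $\td\subset\rad(\vt)$ by one commuting with the Levi factor $\ig$. Taking an $\ig$-invariant linear complement $\td'$ of $\nr(\vt)$ in $\rad(\vt)$ (complete reducibility) does give $[\ig,\td']=0$, since the $\ig$-action on $\rad(\vt)/\nr(\vt)$ is trivial; but it is not automatic that this $\td'$ is a \emph{subalgebra}, and your Whitehead-type argument as phrased does not address this. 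What one actually needs is that $\td'$ can be chosen to consist of semisimple elements and to be abelian; equivalently, that the $\ig$-invariant complement can be taken to be a maximal torus of $\rad(\vt)$. This is precisely where Bourbaki's more detailed argument (conjugacy of maximal tori in splittable algebras, applied inside the centraliser of a suitable element) is invoked, and it is not a direct consequence of $H^1(\ig,\nr(\vt))=0$ alone. So your identification of this as ``the hardest step'' is accurate, but the mechanism you propose for it is not quite the right one.
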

\subsection{Definition of $\mathfrak{n}$-reductive}\label{subsreductive}
Let ${\kt}$ be the complexification of 
a compact Lie algebra ${\kt}_0$. 
Conjugation in ${\kt}$ 
will be 
understood
with respect to 
its compact
real form ${\kt}_0$. Note that all 
Lie 
subalgebras of
a compact Lie algebra are compact and hence reductive.
\begin{prop} \label{prop:31}
For any complex Lie subalgebra
$\mathfrak{v}$ of ${\kt}$, the intersection  
$\mathfrak{v}\cap\bar{\mathfrak{v}}$ is reductive and splittable. 
In particular,
$  \mathfrak{v}\cap\bar{\mathfrak{v}}\cap\nr(\mathfrak{v})=\{0\}$. A
splittable $\mathfrak{v}$ 
admits a Levi-Chevalley decomposition with a reductive Levi factor containing
  $\mathfrak{v}\cap\bar{\mathfrak{v}}$.
\end{prop}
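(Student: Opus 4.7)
The plan is to reduce everything to the observation that $\mathfrak{v}\cap\bar{\mathfrak{v}}$ is the complexification of a compact real Lie subalgebra of $\kt_0$, and then to invoke the structural results on reductive and splittable Lie algebras from \cite{Bou75}.

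First I would set $\mathfrak{h}:=\mathfrak{v}\cap\bar{\mathfrak{v}}$ and note that $\mathfrak{h}$ is invariant under conjugation with respect to $\kt_0$, so the standard real/imaginary-part decomposition identifies $\mathfrak{h}$ with the complexification $\mathfrak{u}_0^{\C}$ of $\mathfrak{u}_0:=\mathfrak{h}\cap\kt_0$. Since $\mathfrak{u}_0$ is a Lie subalgebra of the compact $\kt_0$ it is compact, and so $\mathfrak{h}=\mathfrak{u}_0^{\C}$ is reductive. For splittability I would further decompose $\mathfrak{h}=\mathfrak{z}(\mathfrak{h})\oplus[\mathfrak{h},\mathfrak{h}]$: the center $\mathfrak{z}(\mathfrak{h})=\mathfrak{z}(\mathfrak{u}_0)^{\C}$ consists of pairwise commuting $\C$-linear combinations of elements of $\kt_0$, each of which acts $\ad$-diagonalizably on $\kt$ (being anti-Hermitian for any $\Ad(\Kf_0)$-invariant Hermitian form), so every element of $\mathfrak{z}(\mathfrak{h})$ is semisimple in $\kt$; the semisimple ideal $[\mathfrak{h},\mathfrak{h}]$ is splittable because, by the classical coincidence theorem, abstract and representation-theoretic Jordan decompositions agree inside any semisimple Lie algebra. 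For $X=Z+S\in\mathfrak{h}$ with $Z$ central and $S=S_s+S_n$ the Jordan splitting of $S\in[\mathfrak{h},\mathfrak{h}]$, the pair $(Z+S_s,\,S_n)$ then provides the Jordan decomposition of $X$ in $\kt$, and both summands lie in $\mathfrak{h}$.

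The identity $\mathfrak{h}\cap\nr(\mathfrak{v})=\{0\}$ follows quickly: $\mathfrak{h}\cap\rad(\mathfrak{v})$ is a solvable ideal of the reductive $\mathfrak{h}$, hence contained in $\mathfrak{z}(\mathfrak{h})$, whose elements have just been shown to be $\ad$-semisimple on $\kt$; elements of $\nr(\mathfrak{v})$ are nilpotent, and $0$ is the only element that is simultaneously semisimple and nilpotent. For the last statement, assuming $\mathfrak{v}$ splittable, I would take any Levi-Chevalley decomposition $\mathfrak{v}=\nr(\mathfrak{v})\oplus\mathfrak{v}_r'$ as in \eqref{eq:42}, extend $\mathfrak{h}$ to a maximal reductive subalgebra $\mathfrak{m}$ of $\mathfrak{v}$, and use the Malcev-type conjugacy theorem \cite[\S 5.4]{Bou75} (two maximal reductive subalgebras are conjugate by a product of elementary automorphisms $\exp(\ad X)$ with $X\in\nr(\mathfrak{v})$ nilpotent) to transport $\mathfrak{v}_r'$ onto a Levi factor $\mathfrak{v}_r\supseteq\mathfrak{m}\supseteq\mathfrak{h}$.

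The main obstacle is the splittability step: one has to verify that the Jordan decomposition performed inside the semisimple component $[\mathfrak{h},\mathfrak{h}]$ really produces the Jordan decomposition in the ambient $\kt$. This rests on both the classical Jordan-coincidence theorem for semisimple Lie subalgebras and on the algebraic nature of $\mathfrak{h}$, which is inherited from the fact that $\mathfrak{u}_0^{\C}$ arises as the Lie algebra of a Zariski-closed subgroup of the complexification of the compact group $\Kf_0$.
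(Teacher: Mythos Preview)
Your argument is correct and follows essentially the same route as the paper: both rest on the observation that $\mathfrak{v}\cap\bar{\mathfrak{v}}$ is the complexification of the compact subalgebra $\mathfrak{v}\cap\kt_0$ (hence reductive), and both invoke the conjugacy of maximal reductive subalgebras to produce a Levi factor containing it (the paper cites \cite{OV90}, you cite \cite[\S 5.4]{Bou75}). You supply more detail than the paper does---an explicit splittability argument via the center/semisimple-ideal decomposition, and a direct verification that $\mathfrak{h}\cap\nr(\mathfrak{v})=\{0\}$---but these elaborations are in the same spirit and do not constitute a different approach.
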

\begin{proof} We recall that $\mathfrak{v}$ is splittable if and only if 
its radical 
is splittable (\cite[Ch.VII, \S{5}, Th\'eor\`eme 2]{Bou75}). In this case, 
$\mathfrak{v}$ admits a Levi-Chevalley decomposition and all maximal reductive
Lie subalgebras of $\mathfrak{v}$ can be taken as 
reductive Levi factors.
The intersection $\mathfrak{v}\cap\bar{\mathfrak{v}}$ is reductive, being
 the complexification of the compact Lie algebra
$\mathfrak{v}\cap{{\kt}}_0$. Then the reductive Levi factor in the
Levi-Chevalley decomposition of $\mathfrak{v}$ can be taken to contain
$\mathfrak{v}\cap\bar{\mathfrak{v}}$ (see e.g. \cite{OV90}).\end{proof}

\begin{ntz} In the following, for a complex 
Lie 
subalgebra $\mathfrak{v}$ of $\kt$,
we shall use the notation
\begin{equation*}
  \Li_0(\mathfrak{v})=\mathfrak{v}\cap\kt_0,\quad \Li(\mathfrak{v})=
\mathfrak{v}\cap\bar{\mathfrak{v}}.
\end{equation*}
\end{ntz}

\begin{defn}\label{df33}
Let $\mathbf{K}_0$ be a compact Lie group with Lie algebra
$\kt_0$ and $M_0$ a $\mathbf{K}_0$-homogeneous $CR$ manifold,
with isotropy $\If_0$ and $CR$ algebra
$({\kt}_0,\mathfrak{v})$ at a point
$p_0\in{M}_0$.
We say that $M_0,$ and 
its 
$CR$ algebra $({\kt}_0,\mathfrak{v})$, 
are 
\emph{$\mathfrak{n}$-reductive} if 
\begin{displaymath}
\mathfrak{v}=\nr(\mathfrak{v})\oplus\Li(\mathfrak{v}),
\end{displaymath}
i.e. if 
$\Li(\mathfrak{v})=\mathfrak{v}\cap\bar{\mathfrak{v}}$
is a reductive complement of
$\nr(\mathfrak{v})$ in $\mathfrak{v}$.  
\end{defn}
\begin{rmk}
If $(\kt_0,\mathfrak{v})$ is $\mathfrak{n}$-reductive, then
$\mathfrak{v}$ is splittable. Indeed all elements of 
$\nr(\mathfrak{v})$ are nilpotent and all elements of 
$\Li(\mathfrak{v})$ are splittable, because $\Li(\mathfrak{v})$
is the complexification of $\Li_0(\mathfrak{v})$, 
which is
splittable because consists of semisimple elements.  
Then $\vt$ is splittable by \cite[Ch,VII, \S{5}, Th\'eor\`eme 1]{Bou75}.
\end{rmk} 
All submanifolds which are intersections of dual submanifold in the Matsuki duality,
with the $CR$ structure inherited by the embedding in the ambient flag manifold,
are $\nt$-reductive (see \cite[\S{1}]{AMN2013}).
We exhibit here
an example of a compact homogeneous $CR$ manifold  $M_0$ which is not 
$\mathfrak{n}$-reductive.
\begin{exam}
Let $\mathbf{K}_0=\mathbf{SU}(n)$, $n\geq 3$. Fix a complex symmetric 
nondegenerate $n\times{n}$ matrix $S$ 
and consider the subgroup $\mathbf{V}=\{a\in\mathbf{SL}(n,\mathbb{C})\mid
a^tSa=S\}$ of $\mathbf{SL}(n,\mathbb{C})$, with Lie algebra
$\mathfrak{v}=\{X\in\mathfrak{sl}(n,\mathbb{C})\mid
X^tS+SX=0\}$. 
Set $\If_0=\mathbf{V}\cap\mathbf{K}_0$ and
$M_0=\mathbf{K}_0/\If_0$. This is a $\mathbf{K}_0$-homogeneous 
$CR$ manifold with $CR$ algebra 
$(\kt_0,\mathfrak{v})$, where
$\kt_0\simeq\mathfrak{su}(n)$, $\mathfrak{v}\simeq\mathfrak{so}(n,\mathbb{C})$.
If $S$ and $S^*$ 
are linearly independent, then $\mathfrak{v}$
is a semisimple Lie subalgebra of $\kt$ distinct from
$\mathfrak{v}\cap\bar{\mathfrak{v}}$.
\end{exam}
The $CR$ manifolds of Definition~\ref{df33} have canonical complex realizations:
\begin{thm}[{\cite[Theorem {4.3}]{AMN2013}}]
 Let $M_0$ be an $\nt$-reductive $\Kf_0$-homogeneous $CR$ manifold, with 
 $CR$ algebra $(\kt_0,\vt)$ and isotropy $\If_0$ at some point $p_0\in{M}_0.$
 Then there is a closed complex Lie subgroup $\Vf$ of the complexification $\Kf$ of
 $\Kf_0$ with $\Kf_0\cap\Vf=\If_0$ and $\Lie(\Vf)=\vt$ such that
 the canonical map 
\begin{equation}
 M_0\simeq\Kf_0/\If_0\longrightarrow M_-=\Kf/\Vf
\end{equation}
is a generic $CR$ embedding. \qed
\end{thm}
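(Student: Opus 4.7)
The strategy is to integrate the Levi--Chevalley splitting $\vt=\nr(\vt)\oplus\Li(\vt)$ provided by the $\nt$-reductive hypothesis into a semidirect-product decomposition of a closed subgroup of $\Kf$. Since $\Li(\vt)$ is the complexification of $\Li_0(\vt)=\Lie(\If_0)$, and $\If_0$ is compact in the reductive complex group $\Kf$, there is a unique closed complex reductive subgroup $\If$ of $\Kf$ with $\Lie(\If)=\Li(\vt)$ admitting $\If_0$ as maximal compact subgroup; by the Cartan decomposition $\If=\If_0\cdot\exp(i\Li_0(\vt))$ one already has $\Kf_0\cap\If=\If_0$. On the other hand, $\nr(\vt)$ is a nilpotent ideal of $\vt$ consisting of nilpotent elements of $\kt$, so it integrates via the exponential map of $\Kf$ to a closed unipotent complex subgroup $\Nf\subset\Kf$. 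Since $\Li(\vt)$ normalizes $\nr(\vt)$ inside $\vt$, the group $\If$ normalizes $\Nf$, and therefore $\Vf:=\Nf\cdot\If=\Nf\rtimes\If$ is a closed complex Lie subgroup of $\Kf$ with $\Lie(\Vf)=\vt$ containing $\If_0$.

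\textbf{The main technical step} is the identity $\Kf_0\cap\Vf=\If_0$. The inclusion ``$\supseteq$'' is immediate. For ``$\subseteq$'', observe first that $\kt_0\cap\vt=\Li_0(\vt)$: writing $X=X_n+X_s$ with $X_n\in\nr(\vt)$ nilpotent and $X_s\in\Li(\vt)$, and using that every element of the compact form $\kt_0$ is semisimple, the uniqueness of the Jordan--Chevalley decomposition forces $X_n=0$. Hence $\Kf_0\cap\Vf$ is a compact Lie subgroup of $\Vf$ whose Lie algebra is $\Li_0(\vt)$. To upgrade this to an equality of groups, factor $g\in\Kf_0\cap\Vf$ uniquely as $g=n\cdot h$ with $n\in\Nf$, $h\in\If$. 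The unipotent subgroup $\Nf$ contains no non-trivial compact subgroup; combined with the global Cartan decomposition $\Kf=\Kf_0\cdot\exp(i\kt_0)$ and the compactness of $g$, this forces $n=e$, whence $g=h\in\Kf_0\cap\If=\If_0$. This is the principal obstacle, since it is precisely where compactness of $\Kf_0$ and unipotence of $\Nf$ must combine to rule out a spurious unipotent contribution and reduce to the reductive factor.

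\textbf{Once} $\Kf_0\cap\Vf=\If_0$ is established, the canonical map $\Kf_0/\If_0\to\Kf/\Vf$ is a well-defined, $\Kf_0$-equivariant injection; its differential at $p_0$ is the inclusion $\kt_0/\Li_0(\vt)\hookrightarrow\kt/\vt$, injective because $\kt_0\cap\vt=\Li_0(\vt)$, so by $\Kf_0$-equivariance it is an immersion everywhere, and compactness of $\Kf_0/\If_0$ promotes it to a smooth embedding onto its image. The map is a $CR$ embedding by the very definition of the $CR$ algebra $(\kt_0,\vt)$: $\vt$ is the preimage of $T^{0,1}_{p_0}M_-$ under the differential of the inclusion, so the $CR$ structure pulled back from $M_-$ coincides with the one given on $M_0$. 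Finally, a direct dimension count using $\dim_\C\vt=\dim_\C\nr(\vt)+\dim_\C\Li(\vt)$ and $\dim_\C\Li(\vt)=\dim_\R\Li_0(\vt)$ shows that the real codimension of $M_0$ in $M_-$ and the $CR$ codimension of $M_0$ both equal $\dim_\C\kt-2\dim_\C\nr(\vt)-\dim_\C\Li(\vt)$, which is precisely the defining property of a generic $CR$ embedding.
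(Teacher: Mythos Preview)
The paper does not prove this theorem: it is quoted from \cite{AMN2013} and closed with a bare \qed, so there is no in-paper argument to compare against. Your overall strategy---integrate the Levi--Chevalley splitting to a semidirect product $\Vf=\Nf\rtimes\If$, verify $\Kf_0\cap\Vf=\If_0$, and then check that the induced map is a generic $CR$ embedding via a dimension count---is the natural one and is essentially what the cited reference does.

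There is, however, a genuine gap at the step you yourself flag as the main technical point. Having written $g=n\cdot h$ with $n\in\Nf$ and $h\in\If$, you assert that ``$\Nf$ contains no non-trivial compact subgroup; combined with the global Cartan decomposition $\Kf=\Kf_0\cdot\exp(i\kt_0)$ and the compactness of $g$, this forces $n=e$.'' This inference is not justified as stated: the absence of compact \emph{subgroups} in $\Nf$ says nothing about an individual unipotent factor $n$ of a compact element, and the Cartan decomposition of $\Kf$ does not interact with the factorization $g=nh$ in any direct way (in general $n\notin\Kf_0\cup\exp(i\kt_0)$, and $n\cdot h$ is not in Cartan form). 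A correct argument uses instead that $\Nf$ is \emph{normal} in $\Vf$: the quotient map $\pi\colon\Vf\to\Vf/\Nf\cong\If$ has kernel $\Nf$, so its restriction to the compact subgroup $\Kf_0\cap\Vf$ is injective (the kernel $\Kf_0\cap\Nf$ is compact and unipotent, hence trivial); its image is then a compact subgroup of $\If$ containing $\pi(\If_0)=\If_0$, hence equal to $\If_0$ by maximality of $\If_0$ in $\If$. Since $\pi$ is the identity on $\If_0\subset\Kf_0\cap\Vf$ and injective on all of $\Kf_0\cap\Vf$, this forces $\Kf_0\cap\Vf=\If_0$. With this repair in place, the remaining parts of your argument (construction and closedness of $\Vf$, the embedding, the matching of $CR$ structures, and the genericity dimension count) are correct.
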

\begin{rmk} Vice versa, if $M_-=\Kf/\Vf$ is the homogeneous complex manifold of 
the complexification $\Kf$ of $\Kf_0$, it is shown in  \cite[Prop.2.9]{AMN2013} that
any $\Kf_0$-orbit $M_0$ 
of minimal dimension in $M_-$, with the $CR$ structure induced by 
the ambient space, is  $\nt$-reductive. 
\end{rmk}
 \section{Some remarks on \texorpdfstring{$\SL_n(\C)/\SU(n)$}{TEXT}} \label{sec3}
 Keep the notation of \S\ref{sc4}. As we explained in the introduction, 
 we need to precise the structure of the fibers of the 
 $\Kf_0$-equivariant 
 Mostow fibration $M_-\to{M}_0.$  
 \par 
 Mostow fibration (\cite{Most55, Most62}) extends to homogeneous spaces the 
Cartan decomposition of reductive Lie groups. Both are related
to the fact that the positive definite $n\times{n}$ Hermitian symmetric matrices 
with determinant one are the points of a
Riemannian symmetric
space $\Mf_n$ with negative sectional curvature. We will discuss some topics
on the geometry of $\Mf_n$ (see e.g. \cite{Eb}). 
\par \smallskip
Any compact Lie group $\Kf_0$ has,
for some integer $n>1$,  a faithful linear representation in $\SU(n),$
which extends to a linear representation $\Kf\hookrightarrow\SL_n(\C)$.
Thus decompositions in $\SL_n(\C)$ are preliminary to the general case.
\par\smallskip

The linear group $\SL_n(\C)$ has the Cartan decomposition 
\begin{equation*}
 \SU(n)\times\po(n)\ni (x,X) \longrightarrow x\cdot\exp(X)\in\SL_n(\C),
\end{equation*}
where $\SU(n)=\{x\in\SL_n(\C)\mid x^*x=\Id_n\}$ 
is its maximal compact subgroup consisting
of $n\times{n}$ unitary matrices with determinant one, 
and $\po(n)$ the subspace  of the
traceless Hermitian symmetric $n\times{n}$
matrices in $\slt_n(\C)$.\par 
 The quotient $\Mf_n=\SL_n(\C)/\SU(n)$ 
is a symmetric space of
the noncompact type
and rank $(n\! - \! 1)$, endowed with a Riemannianmetric with negative curvature. 
We can identify $\Mf_n$ with the set $\Pfo(n)$
of positive definite Hermitian symmetric matrices 
in $\SL_n(\C),$
which in turn
is diffeomorphic to 
$\po(n)$ via the exponential map. In this way $\Mf_n$ can be considered as 
an open subset of
$\po(n)$ and 
its tangent bundle 
${T}\Mf_n$ is naturally diffeomorphic to the subbundle 
\begin{equation*}
 T\Mf_n=\{(p,X)\in\Mf_n\times\pt(n)\mid p^{-1}X\in\po(n)\}
\end{equation*}
of the trivial bundle
$\Mf_n\times\pt(n),$ where we set $\pt(n)=\{X\in\C^{n\times{n}}\mid X^*=X\}.$
\par
The special linear group 
$\SL_n(\C)$ acts on $\Mf_n$ as a group of isometries,  
by 
\begin{equation*}
 \SL_n(\C)\times\Mf_n\ni (z,p)\longrightarrow z p z^* \in \Mf_n,
\end{equation*}
and $\SU(n)$ is the stabilizer of the identity
$e=\Id_n$, that we choose as the base point.\par 
The metric tensor on $\Mf_n$ is 
\begin{equation*} 
(X,Y)_p=
 g_p(X,Y)=\trac(p^{-1}Xp^{-1}Y),\;\;\forall p\in\Mf_n,\;\;\forall X,Y\in{T}_p\Mf_n.
\end{equation*}
\par 
The curves 
\begin{equation*}
 \R\ni{t}\to z\exp(tX) z^*\in\Ml_n,\quad\textit{ for }X\in\po(n),\,z\in\SL_n(\C)
\end{equation*}
are the complete geodesics in $\Mf_n$ issued from $p=z z^*$ and
 \begin{equation*}
 \dist(p_1,p_2)\! =\left({{\sum}_{i=1}^n|\log(\lambdaup_i(p_1^{-1}p_2)|^2}\right)^{1/2},
\end{equation*} 
 where $\lambdaup_i(p_1^{-1}p_2)$ are the eigenvalues of the matrix $p^{-1}p_2,$ which are
 real and positive, the Riemanniandistance on $\Mf_n.$
 

\subsection{Killing and Jacobi vector fields}\label{kiljac} 
Since $\Mf_n$ is a 
Riemanniansymmetric space of
$\SL_n(\C)$, the Lie algebra of its Killing vector fields is isomorphic to
$\slt_n(\C).$ The correspondence is
\begin{equation*}
 \slt_n(\C)\ni{Z}\longrightarrow \zetaup_Z=\{p\to 
 Z p+p Z^*\}\in\Xg(\Mf_n).
\end{equation*}
\par
\smallskip
For $H$ in $\po(n),$ the restriction to $[0,1]$ of the geodesic
 $t\to\gammaup_H(t)=\exp(t H)$ is the shortest path
 from $e=\gammaup_H(0)$
to $h=\exp(H)=\gammaup_H(1).$ We will denote by $\Jf(H)$ the space of Jacobi vector fields
on $\gammaup_H$ and by $\Jf_0(H)$ its subspace consisting of those vanishing at $t=0.$
For each $Z\in\slt_n(\C)$, 
the restriction  of $\zetaup_{Z^*}$ to $\gammaup_H$ is a Jacobi vector field,
that we  denote by $\thetaup_Z$:
\begin{equation*}
\{\R\ni{t}\longrightarrow \thetaup_Z(t)=Z^*\exp(tH)+\exp(tH)Z\}\in\Jf(H).
\end{equation*}
To describe  
$\Jf(H)$ 
it is convenient
to consider the commutator of $H:$ 
\begin{equation*} \left\{
\begin{aligned}&\Ct(H)=\{Z\in\slt_n(\C)\mid [Z,H]=0\}=\Ct_u(H)\oplus\Ct_0(H),\;\;\text{with} \\
&\Ct_u(H)=\Ct(H)\cap\su(n),\;\; \Ct_0(H)=\Ct(H)\cap\po(n).\end{aligned}\right.
\end{equation*}
\begin{prop} \label{Prop2.1}
The  
correspondence $\thetaup:\slt_n(\C)\ni{Z}\to\thetaup_Z\in\Jf(H)$ 
is a linear map with 
kernel $\Ct_u(H).$ For each $T\in\Ct_0(H)$, $J(t)=t\cdot\thetaup_T(t)$ is a Jacobi vector field
and 
\begin{align}\label{j3.24an}
 \Jf(H)&=\{\thetaup_Z+t\!\cdot\!\thetaup_T\mid Z\in\slt_n(\C),\;\; T\in\Ct_0(H)\},\\
 \label{j3.25n}
 \Jf_0(H)&=\{\thetaup_Y+t\!\cdot\!\thetaup_{T}\!\mid Y\in\su(n),\;\; T\in\Ct_0(H)\}.
\end{align}
Fix $Z\in\slt_n(\C)$ and $T\in\Ct_0(H).$ Then 
\begin{equation}\label{defjei}
J(t)=\thetaup_Z(t)+t\!\cdot\!\thetaup_T(t)=Z^*\exp(tH)+\exp(tH)Z+2t\cdot{T}\cdot\exp(tH)
 \end{equation}
is the Jacobi vector field on $\gammaup_H$ satisfying the initial conditions:
\begin{equation} \label{j3.27n}
\begin{cases}
 J(0)=Z+Z^*,\\[4pt]
 \dot{J}(0)=\tfrac{1}{2}[H,Z-Z^*]+2T,
\end{cases} \end{equation}
and we have 
\begin{equation} \label{j3.27na}
\begin{cases}
 \dot{J}(t)=\tfrac{1}{2}\thetaup_{[H,Z]+2T}(t),\\[4pt]
 \dfrac{D^k J(t)}{dt^k}=2^{-k}\thetaup_{\ad_H^k(Z)}(t), &\text{for $k\geq{2}.$}
\end{cases}
\end{equation}
 \end{prop}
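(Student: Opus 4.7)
My plan is to prove Proposition~\ref{Prop2.1} in four steps. The first identifies $\thetaup_Z$ as a Killing-field restriction: for $Z\in\slt_n(\C)$ the matrix-valued field $\zetaup_{Z^*}\colon p\mapsto Z^*p+pZ$ is the Killing vector field on $\Mf_n$ generated by $Z^*$, so its restriction to $\gammaup_H$, which is exactly $\thetaup_Z$, is a Jacobi field. Linearity of $\thetaup$ is obvious; for its kernel, $\thetaup_Z\equiv 0$ at $t=0$ forces $Z+Z^*=0$, so $Z\in\su(n)$, and then $\thetaup_Z(t)=[\exp(tH),Z]$ vanishes for all $t$ iff $Z$ commutes with $H$, yielding $\ker\thetaup=\Ct_u(H)$.

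Next, for $T\in\Ct_0(H)$ the commutation $[T,H]=0$ ensures that $H+sT\in\po(n)$ for every $s\in\R$, so
\begin{equation*}
\gammaup_s(t)=\exp\bigl(t(H+sT)\bigr)=\exp(tH)\exp(stT)
\end{equation*}
is a smooth family of geodesics issued from $e$ passing through $\gammaup_H=\gammaup_0$. Its variational vector field is therefore Jacobi, and a short computation gives $\partial_s|_{s=0}\gammaup_s(t)=tT\exp(tH)=\tfrac{1}{2}\,t\!\cdot\!\thetaup_T(t)$, since $\thetaup_T(t)=2T\exp(tH)$ from the same commutation. Thus $t\!\cdot\!\thetaup_T\in\Jf(H)$.

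The third step establishes the decomposition \eqref{j3.24an}--\eqref{j3.25n} by a dimension count. Consider the $\R$-linear map $\slt_n(\C)\oplus\Ct_0(H)\to\Jf(H)$, $(Z,T)\mapsto\thetaup_Z+t\!\cdot\!\thetaup_T$. Its kernel equals $\Ct_u(H)\oplus\{0\}$: setting $\thetaup_Z+t\!\cdot\!\thetaup_T=0$, evaluation at $t=0$ gives $Z\in\su(n)$, and expanding $\exp(tH)Z\exp(-tH)-Z=\sum_{k\geq 1}\tfrac{t^k}{k!}\ad_H^k(Z)=-2tT$ then yields $[H,Z]=-2T$ and $\ad_H^k(Z)=0$ for $k\geq 2$; since $H$ is Hermitian, $\ad_H$ is semisimple, forcing $\ad_H(Z)=0$, hence $Z\in\Ct_u(H)$ and $T=0$. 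The image therefore has real dimension $\dim\slt_n(\C)+\dim\Ct_0(H)-\dim\Ct_u(H)=2(n^2-1)=\dim\Jf(H)$, where I use $\Ct_0(H)=i\cdot\Ct_u(H)$. Surjectivity gives \eqref{j3.24an}, and imposing $J(0)=Z+Z^*=0$, i.e.\ $Z\in\su(n)$, yields \eqref{j3.25n}.

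Finally, \eqref{j3.27n}--\eqref{j3.27na} reduce to the single identity
\begin{equation*}
\nabla_t\thetaup_Z=\tfrac{1}{2}\thetaup_{[H,Z]},
\end{equation*}
which I would verify by writing the Levi-Civita connection of $\Mf_n$ along $\gammaup_H$ in the explicit form $\nabla_tX=\dot X-\tfrac{1}{2}(XH+HX)$ and expanding both sides directly. Iteration yields $\nabla_t^k\thetaup_Z=2^{-k}\thetaup_{\ad_H^k(Z)}$, while $[H,T]=0$ gives $\nabla_t(t\!\cdot\!\thetaup_T)=\thetaup_T$ and $\nabla_t^k(t\!\cdot\!\thetaup_T)=0$ for $k\geq 2$. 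Evaluation at $t=0$, together with the identity $[H,Z]^*=-[H,Z^*]$ coming from $H^*=H$, produces $J(0)=Z+Z^*$ and $\dot J(0)=\tfrac{1}{2}[H,Z-Z^*]+2T$. The main technical obstacle is this connection identity; everything else is dimension bookkeeping and routine manipulation within the symmetric-space structure.
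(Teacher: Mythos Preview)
Your proof is correct and follows essentially the same route as the paper: the paper also reduces everything to the identity $\nabla_t\thetaup_Z=\tfrac{1}{2}\thetaup_{[H,Z]}$, which it obtains via the parallel transport $X\mapsto\exp(sH/2)X\exp(sH/2)$ along $\gammaup_H$ (exactly your formula $\nabla_tX=\dot X-\tfrac{1}{2}(XH+HX)$), and then derives \eqref{j3.24an}--\eqref{j3.25n} from the semisimplicity of $\ad_H$. Your packaging differs only cosmetically---you use a geodesic variation for $t\!\cdot\!\thetaup_T$ where the paper notes that $\thetaup_T$ is parallel, and you do a dimension count where the paper writes the decomposition $\po(n)=[H,\su(n)]\oplus\Ct_0(H)$ explicitly---but the substance is identical.
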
 
\begin{proof} 
If $T\in\Ct_0(H),$ then $\thetaup_T$ is  parallel  and therefore
also $t\!\cdot\!\thetaup_T$ is   Jacobi on $\gammaup_H.$
 To compute 
 the covariant derivatives of the Jacobi vector field
 $J(t)$ defined in \eqref{defjei},
we use the parallel transport $T_{\gammaup_H(t)}\Ml_n\ni{X}\to \exp(sH/2)X\exp(sH/2)
 \in{T}_{\gammaup_H(t+s)}\Ml_n$ along $\gammaup_H$. Then 
\begin{align*}
 \dot{\thetaup}_Z(t)&=\left(\frac{d}{ds}\right)_{s=0} \left[\exp(-sH/2)\left\{Z^*\exp([t+s]H)+
 \exp([t+s]H)Z\right\} \exp(-sH/2)\right]\\
 &=\tfrac{1}{2}[Z^*,H]\exp(tH)+\tfrac{1}{2}\exp(tH)\,[H,Z]=\tfrac{1}{2}\thetaup_{[H,Z]}(t).
 \end{align*}
By iteration we obtain 
 \eqref{j3.27na} and, in particular, \eqref{j3.27n}. \par
 Finally, 
 we need to show that all $J$ in $\Jf(H)$ have the form \eqref{defjei}.
 Since $\ad_H$ is semisimple,  $\slt_n(\C)$
 decomposes into the direct sum of its image and its kernel. Hence
 $\po(n)=[H,\su(n)]\oplus\Ct_0(H),$ and this yields 
 \eqref{j3.24an} and 
 \eqref{j3.25n}.
\end{proof}
For $X\in\po(n),$ we will denote by $J_X$ the geodesic on $\gammaup_H$ with 
\begin{equation} \label{jeix}
\begin{cases}
 J_X(0)=0,\\
 \dot{J}_X(0)=X,
\end{cases}
 \end{equation}
 while $\thetaup_X\in\Jf(H)$ satisfies $\thetaup_X(0)=2X,$ $\dot{\thetaup}_X(0)=0.$
\par
The nonconstant geodesics of a manifold with negative curvature have no conjugate points.
Hence the map
$\Jf_0(H)\ni{J}\to{J}(t)\in{T}_{\gammaup_H(t)}\Mf_n$ is a linear isomorphism
for all $t\neq{0}$. Moreover, for every $J\in\Jf(H)$, the real
map\footnote{
Here and in the following we drop the subscript  indicating where norms and scalar products are
computed, when we feel that this simplified notation does not lead to ambiguity.}  
$t\to\|J(t)\|$ is 
nonnegative and
convex  
and
therefore a nonzero $J(t)\in\Jf(H)$ vanishes for at most one value of $t\in\R,$
corresponding to a minimum of $\|J(t)\|^2$ and thus to a solution of
$(J(t)|\dot{J}(t))=0.$ 
\begin{lem} \label{lem:3.13.3} If $J\in\Jf(H)$ is not parallel along $\gammaup_H$ and
 $(J(0)|\dot{J}(0))=0$, then 
 \begin{equation*}\vspace{-22pt}
 \|J(0)\|<\|J(t)\|\,\,\text{for all $t\neq{0}$.} 
\end{equation*}
\qed

\end{lem}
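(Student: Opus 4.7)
The plan is to analyze the smooth nonnegative function $f(t) := \|J(t)\|^2$, building on the convexity of $t \mapsto \|J(t)\|$ already recorded in the excerpt. First, $f'(0) = 2(J(0),\dot J(0))_{e} = 0$ by hypothesis, so $t = 0$ is a critical point of $f$. Since $\|J(t)\|$ is convex and nonnegative, so is $f = \|J\|^2$, and a critical point of a convex function is a global minimum. This immediately yields the weak inequality $\|J(0)\|^2 \leq \|J(t)\|^2$ for all $t \in \R$.

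To upgrade this to the strict inequality claimed in the lemma I argue by contradiction: suppose $\|J(t_1)\| = \|J(0)\|$ for some $t_1 \neq 0$. Convexity then forces $f$ to be constant on the interval $I$ with endpoints $0$ and $t_1$, so in particular $f'' \equiv 0$ on $I$. Differentiating $f'(t) = 2(J,\dot J)_{\gammaup_H(t)}$ and invoking the Jacobi equation $\ddot J = -R(J,\dot\gammaup_H)\dot\gammaup_H$ gives
$$
f''(t) \;=\; 2\|\dot J(t)\|^2 \,-\, 2\bigl(R(J,\dot\gammaup_H)\dot\gammaup_H,\,J\bigr)_{\gammaup_H(t)},
$$
and on $\Mf_n$, which is of nonpositive sectional curvature, both summands are nonnegative. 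Hence $f'' \equiv 0$ on $I$ forces $\dot J \equiv 0$ on $I$.

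The final step is to promote $\dot J \equiv 0$ on the interval $I$ to $\dot J \equiv 0$ on all of $\R$. Here I would invoke Proposition~\ref{Prop2.1}, which writes
$\dot J(t) = \tfrac12\,\thetaup_{[H,Z]+2T}(t) = \tfrac12\bigl(W^{*}\exp(tH) + \exp(tH)\,W\bigr)$
with $W = [H,Z] + 2T$. This matrix-valued function is real-analytic in $t$, so vanishing on the nontrivial interval $I$ forces $\dot J \equiv 0$ on $\R$, i.e.\ $J$ is parallel along $\gammaup_H$ on the whole geodesic. This contradicts the hypothesis and completes the proof. The main obstacle is really only this last promotion: nonpositive curvature delivers convexity at once, but ruling out a Jacobi field that is parallel on part of the geodesic and nontrivial elsewhere requires the real-analytic, symmetric-space form of $\dot J$ provided by Proposition~\ref{Prop2.1}.
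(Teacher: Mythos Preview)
Your proof is correct. The paper actually gives no proof at all for this lemma (the \qed\ follows the statement directly), treating it as immediate from the convexity of $t\mapsto\|J(t)\|$ recorded just before; your argument makes that convexity argument explicit and, in particular, supplies the strictness step that the paper elides.

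One small remark on the last step: the real-analyticity argument via the explicit matrix formula $\dot J(t)=\tfrac12\thetaup_{[H,Z]+2T}(t)$ from Proposition~\ref{Prop2.1} is valid, but you can also finish without it. Since $\Mf_n$ is locally symmetric ($\nabla R=0$), differentiating the Jacobi equation shows that $\dot J$ is itself a Jacobi field along $\gammaup_H$; hence $\dot J\equiv 0$ on a nontrivial interval $I$ gives $\dot J(t_0)=0$ and $\tfrac{D}{dt}\dot J(t_0)=\ddot J(t_0)=0$ at any interior $t_0\in I$, forcing $\dot J\equiv 0$ on all of $\R$ by uniqueness for the Jacobi ODE. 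Either route yields the contradiction with ``$J$ not parallel''.
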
 
\begin{lem}
The quadratic form
\begin{equation}\label{J18n} 
 \| J\|_H^2 = \int_0^1\!(1\! -\!{t})\left( \|\dot{J}(t)\|^2
  +
  (J(t),\ddot{J}(t))
  \right) dt 
\end{equation}
 is positive semidefinite on $\Jf(H)$ and 
\begin{equation*}
 \| J\|^2_H=0\Leftrightarrow J=
 \thetaup_{T}\;\text{   for a }\; T\in\Ct_0(H).\end{equation*}
\end{lem}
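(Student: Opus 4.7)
My plan is to recognize the integrand as $\tfrac12\tfrac{d^2}{dt^2}\|J(t)\|^2$, reduce $\|J\|_H^2$ to a boundary expression by integration by parts, and then exploit the convexity of $t\mapsto\|J(t)\|^2$ already noted in the text.

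Setting $f(t)=\|J(t)\|^2$, a direct computation gives $\tfrac12 f''(t)=\|\dot J(t)\|^2+(J(t),\ddot J(t))$, so that $\|J\|_H^2=\tfrac12\int_0^1(1-t)f''(t)\,dt$. A single integration by parts yields
\begin{equation*}
\|J\|_H^2=\tfrac12\|J(1)\|^2-\tfrac12\|J(0)\|^2-(J(0),\dot J(0)).
\end{equation*}
Since $\Mf_n$ has nonpositive sectional curvature, the identity $f''(t)=2\|\dot J(t)\|^2-2\langle R(J,\dot\gammaup)\dot\gammaup,J\rangle$ exhibits $f''$ as a sum of two nonnegative quantities, so $f$ is convex. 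The tangent inequality $f(1)\ge f(0)+f'(0)$ is then exactly $\|J\|_H^2\ge 0$.

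For the characterization of equality, $\|J\|_H^2=0$ forces $f$ to be affine on $[0,1]$, hence $f''\equiv 0$ there. But $f''$ is a sum of two nonnegative terms, the first being $2\|\dot J\|^2$, so each must vanish; in particular $\dot J\equiv 0$ on $[0,1]$, and $J$ is parallel along $\gammaup_H$. The reverse implication is contained in the proof of Proposition~\ref{Prop2.1}: if $T\in\Ct_0(H)$ then $\thetaup_T$ is already parallel, so the integrand vanishes pointwise and $\|\thetaup_T\|_H^2=0$.

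It remains to show that every parallel Jacobi field on $\gammaup_H$ has the form $\thetaup_T$ with $T\in\Ct_0(H)$. Using the parallel transport formula of the proof of Proposition~\ref{Prop2.1}, any such $J$ satisfies $J(t)=\exp(tH/2)\,J(0)\,\exp(tH/2)$ with $J(0)\in\po(n)$; the Jacobi equation $\ddot J+R(J,\dot\gammaup)\dot\gammaup=0$ combined with $\ddot J\equiv 0$ gives $R(J(0),H)H=0$, which in the symmetric space $\Mf_n=\SL_n(\C)/\SU(n)$ is $\ad_H^2(J(0))=0$. Semisimplicity of $\ad_H$ yields $\ker\ad_H^2=\ker\ad_H$, hence $[J(0),H]=0$, so $T:=J(0)/2\in\Ct_0(H)$ and $J=\thetaup_T$. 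The delicate point is precisely this algebraic identification of parallel Jacobi fields with the subspace $\{\thetaup_T:T\in\Ct_0(H)\}$; the inequality itself and the reduction of the equality case are immediate once the integrand is recognized as a second derivative.
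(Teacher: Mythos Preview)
Your proof is correct, and the underlying idea---nonnegativity of $f''(t)=2\|\dot J\|^2-2\langle R(J,\dot\gammaup)\dot\gammaup,J\rangle$ from nonpositive curvature---is the same as the paper's. The paper, however, argues more directly: since both $\|\dot J\|^2$ and $(J,\ddot J)=-\langle R(J,\dot\gammaup)\dot\gammaup,J\rangle$ are nonnegative, the integrand $(1-t)\big(\|\dot J\|^2+(J,\ddot J)\big)$ is nonnegative on $[0,1]$, so $\|J\|_H^2\ge 0$ immediately, and equality forces $\dot J\equiv 0$. Your detour through integration by parts and the tangent-line inequality is a valid repackaging of the same positivity, and in fact your boundary formula $\|J\|_H^2=\tfrac12\|J(1)\|^2-\tfrac12\|J(0)\|^2-(J(0),\dot J(0))$ is exactly the content of the paper's \emph{next} lemma (Lemma~\ref{lemJ6}); so you have effectively merged the two. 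Your explicit identification of parallel Jacobi fields via $\ad_H^2(J(0))=0$ and semisimplicity of $\ad_H$ is more detailed than the paper, which simply cites the description of $\Jf(H)$ in Proposition~\ref{Prop2.1}.
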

\begin{proof}
Let $J\in\Jf(H).$  Then 
$
 (\ddot{J},J)=-(R(J,\dot{\gammaup}_H)\dot{\gammaup}_H|J)\geq{0}
$ for all $t$ by the Jacobi equation, 
because $\Ml_n$ has negative sectional curvature. 
Hence $\|J\|^2_H=0$ if and only if 
$\dot{J}(t)=0$ for all $t$.
The statement follows because $\{\thetaup_T\mid T\in\Ct_0(H)\}$ is the space of
the Jacobi vector fields 
that are parallel along $\gammaup_H$.
\end{proof}
\begin{lem}\label{lemJ6}
We have 
\begin{align}\label{intparti}
  \|J(1)\|^2
  =\|J(0)\|^2\! +\!  2
  (J(0)|\dot{J}(0))
 \! +2\, \|J\|_H^2,\qquad 
  \forall J\in\Jf(H). 
\end{align}
\end{lem}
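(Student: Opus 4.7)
The plan is to recognize \eqref{intparti} as an instance of Taylor's formula with integral remainder applied to the smooth real function
\[
 f(t) = \|J(t)\|^2 = (J(t)\,|\,J(t)),\qquad t\in[0,1],
\]
and then translate that back to the quadratic form $\|J\|_H^2$.

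First I would differentiate: since covariant differentiation is compatible with the Riemannian metric along the geodesic $\gammaup_H$,
\[
 f'(t) = 2\,(J(t)\,|\,\dot{J}(t)),\qquad
 f''(t) = 2\,\|\dot{J}(t)\|^2 + 2\,(J(t)\,|\,\ddot{J}(t)).
\]
Comparing with the definition \eqref{J18n}, this gives the identity
\[
 2\,\|J\|_H^2 = \int_0^1 (1-t)\, f''(t)\,dt.
\]

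Next I would perform a single integration by parts on the right-hand side. With $u = (1-t)$, $dv = f''(t)\,dt$, one has $du = -dt$ and $v = f'(t)$, so
\[
 \int_0^1 (1-t)\,f''(t)\,dt = \bigl[(1-t)\,f'(t)\bigr]_0^1 + \int_0^1 f'(t)\,dt = -f'(0) + f(1) - f(0).
\]
Substituting $f(0) = \|J(0)\|^2$, $f(1) = \|J(1)\|^2$, and $f'(0) = 2(J(0)\,|\,\dot{J}(0))$, and solving for $\|J(1)\|^2$ yields exactly \eqref{intparti}.

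There is no real obstacle: the only point that needs care is that the integrand in \eqref{J18n} is indeed $\tfrac12 f''(t)$, which uses the metric compatibility of the Levi-Civita connection along $\gammaup_H$ — this is automatic in a Riemannian setting and does not invoke the Jacobi equation or the sign of the curvature. In particular, the identity \eqref{intparti} is purely kinematic and holds for any smooth vector field along $\gammaup_H$; the Jacobi condition is only needed to ensure that $\|J\|_H^2 \geq 0$, as used in the previous lemma.
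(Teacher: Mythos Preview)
Your proof is correct and is exactly the approach the paper takes: the paper's own proof consists of a single sentence saying to apply the integral form of the remainder in the first-order Taylor expansion to $f(t)=\|J(t)\|^2$. You have simply spelled out the details of that computation.
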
 
\begin{proof} We 
apply the integral form of the reminder in the first order Taylor's expansion to 
 $f(t)=\|J(t)\|^2.$
\end{proof}
For further reference, we state an easy consequence of Lemma~\ref{lemJ6}.
\begin{lem}\label{lemma3.2.13}
Let $Z\in\slt_n(\C)$, $X\in\pt_0(n)$, and $\trac(X\cdot{Z})=0.$
Then 
\begin{equation}
\label{eq:3.35}
 \|\thetaup_Z(1)-J_X(1)\|^2=\|Z+Z^*\|^2+ 2(H|[Z,Z^*])
 +
 \|\thetaup_Z-J_X\|_H^2.
\end{equation}
\end{lem}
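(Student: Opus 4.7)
The statement has exactly the shape of the Taylor-with-remainder identity of Lemma~\ref{lemJ6}, so the plan is to apply that lemma to the Jacobi vector field
\begin{equation*}
J(t) \;=\; \thetaup_Z(t) - J_X(t),
\end{equation*}
which is Jacobi along $\gammaup_H$ as the difference of two Jacobi fields (by Proposition~\ref{Prop2.1} and the definition \eqref{jeix}), and then to identify the three terms on the right.

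First I would read off the initial data. From Proposition~\ref{Prop2.1} applied with $T=0$, one has $\thetaup_Z(0)=Z+Z^*$ and $\dot\thetaup_Z(0)=\tfrac12[H,Z-Z^*]$; combined with $J_X(0)=0$, $\dot J_X(0)=X$ from \eqref{jeix}, this gives
\begin{equation*}
J(0)=Z+Z^*,\qquad \dot J(0)=\tfrac12[H,Z-Z^*]-X.
\end{equation*}
Since the base point is $e=\Id_n$, the metric at $J(0)$ is simply the trace form, so $\|J(0)\|^2=\trac((Z+Z^*)^2)=\|Z+Z^*\|^2$, which matches the first term on the right.

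The heart of the argument is the computation of $2(J(0)\mid\dot J(0))$. I would split it as
\begin{equation*}
2\bigl(Z+Z^*\,\bigm|\,\tfrac12[H,Z-Z^*]\bigr)\;-\;2(Z+Z^*\mid X).
\end{equation*}
For the second piece, the hypothesis $\trac(XZ)=0$ and the fact that $X$ is Hermitian force $\trac(XZ^*)=\overline{\trac(XZ)}=0$, so this summand vanishes; this is the one and only place where the hypothesis $\trac(XZ)=0$ is used, and it is the step one must spot. For the first piece, one expands $[H,Z-Z^*]$ and uses the cyclic property of the trace: the terms containing $Z^2$ and $Z^{*2}$ cancel in pairs, and the remaining four terms collapse to $2\,\trac\!\bigl(H[Z,Z^*]\bigr)=2\,(H\mid[Z,Z^*])$. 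This reproduces the middle term of the claim.

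Finally, by definition the remainder term in Lemma~\ref{lemJ6} applied to $J=\thetaup_Z-J_X$ is exactly $\|\thetaup_Z-J_X\|_H^2$ (up to the factor in the convention of \eqref{J18n}), which matches the last term. Combining the three contributions via \eqref{intparti} yields \eqref{eq:3.35}. There is no real obstacle: the only non-mechanical step is recognising that $\trac(XZ)=0$ together with $X=X^*$ kills \emph{both} $\trac(XZ)$ and $\trac(XZ^*)$, which is what lets the $X$-contribution disappear cleanly.
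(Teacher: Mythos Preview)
Your proof is correct and follows exactly the same route as the paper: apply Lemma~\ref{lemJ6} to $J=\thetaup_Z-J_X$, read off $J(0)=Z+Z^*$ and $\dot J(0)=\tfrac12[H,Z-Z^*]-X$, use $\trac(XZ)=0$ with $X=X^*$ to kill the $X$-contribution, and reduce $([H,Z-Z^*]\mid Z+Z^*)$ to $2(H\mid[Z,Z^*])$ by cyclicity. Your parenthetical remark about the factor in the convention of \eqref{J18n} is well placed: comparing \eqref{intparti} with \eqref{eq:3.35} there is indeed a stray factor of $2$ in front of the remainder term, an internal inconsistency in the paper rather than in your argument.
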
 
\begin{proof} We apply \eqref{intparti} 
to
$J=\thetaup_Z-J_X.$ 
\par 
Then 
\begin{equation*}
 J(0)=Z+Z^*,\quad \dot{J}(0)=\tfrac{1}{2}[H,Z-Z^*] - X.
\end{equation*}
yields 
\begin{align*}
  \|\thetaup_Z(1)-J_X(1)\|^2&=\| J(1)\|^2  = \|Z+Z^*\|^2+2(Z+Z^* |X
 +\tfrac{1}{2}[H,Z-Z^*])+(J|J)_H\\
 &=\|Z+Z^*\|^2+ ([H,Z-Z^*]|Z+Z^*)+(J|J)_H\\ 
 &= \|Z+Z^*\|^2+2  (H |[Z,Z^*])+ (J|J)_H.
\end{align*}
\end{proof}
\par\smallskip
Let $J(t)=\thetaup_Z(t)+t\thetaup_T(t),$ 
with $Z\in\slt_n(\C)$ and $T\in\Ct_0(H).$ The two commuting Hermitian symmetric matrices
$H$ and $T$  can be simultaneously diagonalized in an orthonormal basis of $\C^n.$ 
Let $\lambdaup_1,\hdots,\lambdaup_m$ be the distinct eigenvalues of $H$, with multiplicities
$n_1,\hdots,n_m$ and choose an orthonormal basis of $\C^n$ to get  matrix representations 
\begin{equation}\label{matric}\left\{
\begin{aligned}
 H&= 
{\begin{pmatrix}
 \lambdaup_1\Id_{n_1}\\
 &\lambdaup_2\Id_{n_2}\\
 && \ddots\\
 && &\lambdaup_m\Id_{n_m}
\end{pmatrix}}, 
&T&= 
{\begin{pmatrix}
 \tauup_1\\
 & \tauup_2\\
 &&\ddots\\
 &&& \tauup_m
\end{pmatrix}},\\ 
Z&=
\begin{pmatrix}
 z_{1,1} & z_{1,2}&\hdots&z_{1,m}\\
 z_{2,1} & z_{2,2} &\hdots & z_{2,m}\\
 \vdots&\vdots &\ddots&\vdots \\
 z_{m,1}&z_{m,2}&\hdots & z_{m,m}\end{pmatrix}, 
 && 
\begin{aligned}
& \text{with $\tauup_i\in\R^{n_i\times{n}_i}$ diagonal,}\\
& \text{and $z_{i,j}\in\C^{n_i\times{n}_j}.$}
\end{aligned}
\end{aligned}\right.\end{equation}
Let us extend the trace norm of $\po(n)$ to a norm 
in $\slt_n(\C),$ by setting 
\begin{equation*}
 |\| A\|| =\sqrt{\trac(AA^*)}\geq{0},\;\;\forall A\in\slt_n(\C).
\end{equation*}
Then
\begin{align*}
& \| J(t)\|^2= \trac\left(Z^2+{Z^*}^2
\!
+
\!
 2e^{tH}Z e^{-tH}Z^*
 \!
 +
 \!
 4t(Z+Z^*)T
 \!
 +
 \!
 4t^2T^2
 \right)\\
&\; =\trac\left(2\re{\sum}_{i,j=1}^m{z_{i,j}z_{j,i}}\!
+\!
2{\sum}_{i,j=1}^m{z}_{i,j}z_{i,j}^* e^{t(\lambdaup_i-\lambdaup_j)}
 \!+
 \!
 8t\re{\sum}_{i=1}^m\tauup_i z_{i.i}\!
 +\!
 4t^2\!{\sum}_{i=1}^m\tauup_i^2\right)\\
 &\; = {\sum}_{i\neq{j}}\left|\left\|\,  z_{i,j} e^{t(\lambdaup_i-\lambdaup_j)/2}+{z}^*_{j,i} 
 e^{t(\lambdaup_j-\lambdaup_i)/2}\,
 \right\|\right|^2+{\sum}_{i=1}^m |\|\,2t \tauup_i+ z_{i,i}+\bar{z}_{i,i}\, |\|^2.
\end{align*}
\par 
Set $Z(t)=\exp(t H/2) Z \exp (- t H/2)=(z_{i,j}(t)),$ with 
\mbox{$z_{i,j}(t)\! =\! z_{i,j}e^{t(\lambdaup_i-\lambdaup_j)/2}
\in\C^{n_i\times{n}_j}.$}
We obtain the expression
\begin{equation}\label{jeit}
 \|J(t)\|^2={\sum}_{i\neq{j}}|\| z_{i,j}(t)\! +\!
  {z}^*_{j,i}(t)|\|^2
  \!
  +
  \!
  {\sum}_{i=1}^m|\|2t\tauup_i+z_{i,i}
  \!
  +
  \!
  {z}^*_{i,i}|\|^2.
\end{equation} 
If $J(t)=0$, then each summand in \eqref{jeit} equals zero. For the terms 
in the first sum this amounts to the fact that
$[H,Z(t)]=((\lambdaup_i-\lambdaup_j)z_{i,j}(t))_{1\leq{i,j}\leq{m}}$
is Hermitian symmetric. Since $[H,Z(t)]$ and $[H,Z]$ are similar, we obtain:
\begin{lem} Let $Z\in\slt_n(\C)$ and $H\in\po(n).$
 A necessary condition in order that there exists $T\in\Ct_0(H)$ such that the Jacobi vector field
 $J(t)=\thetaup_Z(t)+t\thetaup_T(t)$ on $\gammaup_H$ 
 vanishes at some $t\in\R$ is that $[H,Z]$ is semisimple with
 real eigenvalues. \qed
\end{lem}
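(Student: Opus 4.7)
The plan is to read off $J(t_0)=0$ directly from the decomposition \eqref{jeit}, whose right-hand side is a sum of non-negative terms. Fixing $t=t_0$, every summand must vanish individually; in particular the off-diagonal contributions give
\[
z_{i,j}(t_0)+z^*_{j,i}(t_0)=0\qquad\text{for all } i\neq j.
\]
As the paragraph immediately preceding the lemma observes, this is precisely the statement that the commutator $[H,Z(t_0)]$---whose $(i,j)$-block equals $(\lambdaup_i-\lambdaup_j)z_{i,j}(t_0)$ and whose diagonal blocks vanish---is Hermitian symmetric. The verification $[H,Z(t_0)]^{*}=[H,Z(t_0)]$ reduces, using the realness of the $\lambdaup_k$, to the displayed identity and the skew relation it encodes.

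The remaining step is to transfer the Hermitian property to $[H,Z]$ itself. Since $H$ commutes with $\exp(t_0H/2)$, the relation $Z(t_0)=\exp(t_0H/2)\,Z\exp(-t_0H/2)$ yields
\[
[H,Z(t_0)]=\exp(t_0H/2)\,[H,Z]\,\exp(-t_0H/2).
\]
Hence $[H,Z]$ is conjugate inside $\SL_n(\C)$ to a Hermitian symmetric matrix, and therefore is semisimple with real spectrum---which is the asserted necessary condition.

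It is worth noting that the parameter $T$ plays no role in this argument: the constraint is extracted solely from the off-diagonal summands of \eqref{jeit}, while $T$ enters only through the diagonal summands $|\|2t_0\tauup_i+z_{i,i}+z^{*}_{i,i}|\|^2$, whose vanishing can always be arranged for $t_0\neq 0$ by a suitable choice of the real diagonal $\tauup_i$. For the same reason no serious obstacle is anticipated: all computational ingredients are already assembled in \eqref{matric} and \eqref{jeit}, and the proof reduces to reading off the first sum of \eqref{jeit} and invoking the similarity $Z(t_0)\sim Z$ via $\exp(t_0 H/2)$.
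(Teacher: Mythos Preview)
Your argument is correct and follows exactly the paper's approach: the proof is contained in the paragraph preceding the lemma, where the vanishing of each summand in \eqref{jeit} is read off, the off-diagonal conditions are identified with $[H,Z(t)]$ being Hermitian, and similarity via $\exp(tH/2)$ transfers this to $[H,Z]$. Your additional remark on the irrelevance of $T$ is a helpful gloss but not part of the paper's terse treatment.
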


\begin{exam}\label{esampduesette}
We consider the matrices \begin{equation*}
H= \begin{pmatrix}
\lambdaup_1 & 0 & 0\\
0 & \lambdaup_2 & 0 \\
0 & 0 &\lambdaup_3
\end{pmatrix}\in\slt_3(\R),\;\; 
Z= \begin{pmatrix}
0 & a & 0 \\
b & 0 & c \\
0 & d & 0
\end{pmatrix},\;\; Y= \begin{pmatrix}
0 &\alphaup & 0 \\
-\bar{\alphaup} &0 &  {\betaup}\\
0 & -\bar{\betaup} & 0
\end{pmatrix}.
\end{equation*}
We impose the conditions that 
$Z$  be nilpotent and orthogonal to
$X=[H,Y]$ and that $\thetaup_{Z+Y}(1)=0.$ 
This translates into the set of equations \begin{equation*}
\begin{cases} ab+cd=0,\\
(\lambdaup_2-\lambdaup_1)(a\bar{\alphaup}+b\alphaup)+(\lambdaup_3-
\lambdaup_2)(c\bar{\betaup}+d{\betaup})=0,\\
\alphaup=(ae^{\lambdaup_1}+\bar{b}e^{\lambdaup_2})/(e^{\lambdaup_2}
-e^{\lambdaup_1}),\\
\betaup=(c e^{\lambdaup_2}+\bar{d} e^{\lambdaup_3})/(e^{\lambdaup_3}
- e^{\lambdaup_2}).
\end{cases}
\end{equation*}
By using the last two equation we reduce to the system \begin{equation*}
\begin{cases} ab+cd=0,\\[9pt]
\begin{aligned}
\dfrac{\lambdaup_2-\lambdaup_1}{e^{\lambdaup_2}-e^{\lambdaup_1}}
\big(|a|^2 e^{\lambdaup_1}+ab(e^{\lambdaup_1}+e^{\lambdaup_2})+
|b|^2 e^{\lambdaup_2}\big)\qquad\qquad\qquad \\
+
\dfrac{\lambdaup_3-\lambdaup_2}{e^{\lambdaup_3}-e^{\lambdaup_2}}
\big(|c|^2 e^{\lambdaup_2}+cd(e^{\lambdaup_2}+e^{\lambdaup_3})+
|d|^2 e^{\lambdaup_3}\big)=0\end{aligned}
\end{cases} 
\end{equation*}
Assuming $ab\neq{0}$ we obtain from the first equation $d=-ab/c$ 
and, as   ${\lambdaup_3=-\!\lambdaup_1\! -\! \lambdaup_2,}$
the system  reduces to \begin{equation}\tag{$*$}
\label{stellinaa} \left\{
\begin{aligned}
\dfrac{\lambdaup_2-\lambdaup_1}{e^{\lambdaup_2}-e^{\lambdaup_1}}
\left(|a|^2 e^{\lambdaup_1}+ab(e^{\lambdaup_1}+e^{\lambdaup_2})+
|b|^2 e^{\lambdaup_2}\right)\qquad\qquad\qquad \qquad\qquad \\
+\, 
\dfrac{\lambdaup_1+2\lambdaup_2}{e^{\lambdaup_2}-e^{-\lambdaup_1-\lambdaup_2}}
\left(|c|^2 e^{\lambdaup_2}-ab(e^{\lambdaup_2}+e^{-\lambdaup_1-\lambdaup_2})+
\frac{|ab|^2}{c^2} e^{-\lambdaup_1-\lambdaup_2}\right)=0\end{aligned}\right.
\end{equation}
Let us restrict to the case where $a,b,c$ are real.
For any fixed $a,b,c$ with $ab\neq{0},$  the left hand side of \eqref{stellinaa} is
positive when $ab>0$ and 
$|\lambdaup_1+2\lambdaup_2|$ is 
sufficiently small. Let us keep now $\lambdaup_1$ fixed and consider the left hand side of
\eqref{stellinaa} as a real valued
function $f(\lambdaup_2)$ of the parameter $\lambdaup_2$.
Then 
\begin{equation*}
 \lim_{\lambdaup_2\to +\infty} \lambdaup_2^{-1} f(\lambdaup_2)=|b|^2+|c|^2-ab.
\end{equation*}
If $ab>0,$ this is negative for $|a|\gg{1}.$  
Then we can choose the parameters to satisfy \eqref{stellinaa}.
In conclusion: \textsl{we can find  $H,Z,Y$
with $H\in\po(3),$ $Z\in\slt_3(\C)$ nilpotent, and $Y\in\su(3)$
with $X=[H,Y]\in\po(3)$ trace-orthogonal to $Z$ such that
$\thetaup_{Z+Y}(0)\neq{0}$ and $\thetaup_{Z+Y}(1)=0.$}
\end{exam}

\par\smallskip
Jacobi vector fields are used to compute the differential of the exponential map.
In fact, for $H,X\in\po(n)$, the covariant
derivative \mbox{$\tfrac{D}{dt}\exp(H+tX)|_{t=0}$} is 
the value at $t=1$ of the Jacobian vector field $J_X\in\Jf_0(H).$
If $X=[H,Y]+T,$ with $Y\in\su(n)$ and $T\in\Ct_0(H),$ then 
\begin{equation}
  \frac{D}{dt}\exp(H+tX)|_{t=0}=J_X(1)=[\exp(H),Y]+T\exp(H).
\end{equation}
\section{Decompositions with Hermitian fibers}\label{hermitianfiber}
 \subsection{Decomposition of \texorpdfstring{$\SL_n(\C)$}{TEXT}}
 Throughout this section, 
$\Vf$ is 
a closed complex Lie subgroup of $\SL_n(\C)$, that 
admits a Levi-Chevalley
decomposition $\Vf=\Vf_{\!{r}}\cdot\Vf_{\!{n}}$, with $\Vf_{\!{r}}$ 
algebraic reductive and $\Vf_{\!{n}}$ unipotent (cf. \cite[Ch.I, \S{6.5}]{OV93}).
We choose the embedding $\Vf\hookrightarrow\SL_n(\C)$ 
in such a way that 
$\Vf_0=\Vf\cap\SU(n)$ is a maximal compact sugbroup of $\Vf$ 
and a real form of
$\Vf_{\!{r}}$ 
and set:
\begin{gather}
\vt=\Lie(\Vf),\;\; \vt_r=\Lie(\Vf_{\!{r}}),\;\; \vt_n=\Lie(\Vf_{\!{n}}),\;\;
\vt_0=(\vt\cap\su(n))=\Lie(\Vf_0),\;\; \\ 
\label{emmezero}
\mt_0=(\vt+\vt^*)^\perp\cap\po(n), \quad
\vt=\vt_0\oplus\vt', 
\;\; \text{with}\;\; 
\vt'=(\vt\cap\po(n))\oplus\vt_n . 
\end{gather}
\begin{rmk}\label{rem3.1}
 We have 
$
 (\vt+\vt^*)\cap\po(n)=\{Z+Z^*\mid Z\in\vt\}.
$ 
Indeed, if $Z_1,Z_2\in\vt$ and $Z_1+Z_2^*\in\po(n)$, then $Z=
(Z_1+Z_2)/2\in\vt$ and
$Z_1+Z_2^*=Z+Z^*.$
Hence
the maps 
\begin{equation}\label{equa3.2}\begin{cases}
\vt'\ni{Z}\to(Z+Z^*)\in(\vt+\vt^*)\cap\po(n), \\
 \vt'\oplus\mt_0\ni (Z,X)\longleftrightarrow (Z^*+X+Z)\in\pt_0
 \end{cases}
\end{equation}
are $\R$-linear isomorphisms. 
Often we will write $Z\in\vt$ as a sum $Z=Z_0+Z_n,$
where it will be understood that $Z_0\in(\vt\cap\po(n))$ and $Z_n\in\vt_n.$
\end{rmk}
By \eqref{equa3.2}, the Euclidean subspace $\exp(\mt_0)$ is 
a natural candidate for the typical fiber $F_0$
of 
an $\SU(n)$-covariant fibration 
of $\SL_n(\C)/\Vf.$  As we will see, this is in fact
 the case for some important
classes 
of $\Vf$'s.
 \par \smallskip
Being algebraic, $\Vf$ admits the decomposition 
\begin{equation}\label{dcmpv}
 \Vf_0\times\vt'\ni (u,Z_0+Z_n) \longleftrightarrow
  u\cdot\exp(Z_0)\cdot\exp(Z_n)\in\Vf,
\end{equation}
which is a consequence of the Levi-Chevalley decomposition of $\Vf$ and of the polar Cartan decomposition
of $\Vf_{\! r}.$ 
Set 
\begin{equation} \label{defenne}
 N=\{p\in\Mf_n\mid p=v^*v, \;\text{for some}\; v\in\Vf\}.
\end{equation}
\begin{lem}
 The map $v\to v^*v$ defines, by passing to the quotients, an isomorphism 
\begin{equation}\label{tresette}
 \Vf/\Vf_0 \ni [v] \xrightarrow{\;\;\sim\;\;} v^*v\in{N}.
\end{equation}
\end{lem}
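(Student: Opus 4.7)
The plan is to verify three things: that $v \mapsto v^{*}v$ descends to a well-defined map on $\Vf/\Vf_0$, that the descended map is a continuous bijection onto $N$, and finally that it is a diffeomorphism.

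First I would observe that for any $u \in \Vf_0 = \Vf \cap \SU(n)$ and $v \in \Vf$ one has $(uv)^{*}(uv) = v^{*}u^{*}uv = v^{*}v$, so the map factors through $\Vf/\Vf_0$. Surjectivity onto $N$ is simply the definition \eqref{defenne}. For injectivity, suppose $v_1^{*}v_1 = v_2^{*}v_2$ with $v_1,v_2 \in \Vf$. Setting $u = v_1 v_2^{-1} \in \Vf$, I compute
\begin{equation*}
u^{*}u \;=\; (v_2^{-1})^{*} v_1^{*}v_1 v_2^{-1} \;=\; (v_2^{-1})^{*} v_2^{*}v_2 v_2^{-1} \;=\; \Id_n.
\end{equation*}
Since also $\det u = 1$, we get $u \in \SU(n) \cap \Vf = \Vf_0$, and therefore $[v_1]=[v_2]$ in $\Vf/\Vf_0$, proving injectivity.

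Next I would address smoothness and the smooth inverse by exploiting the Cartan-type decomposition \eqref{dcmpv} of $\Vf$. Writing $v = u \cdot \exp(Z_0)\cdot \exp(Z_n)$ with $u \in \Vf_0$, $Z_0 \in \vt \cap \po(n)$, $Z_n \in \vt_n$, and using that $\exp(Z_0)$ is Hermitian symmetric, I obtain
\begin{equation*}
v^{*}v \;=\; \exp(Z_n)^{*}\exp(2Z_0)\exp(Z_n).
\end{equation*}
This exhibits $N$ as the image of the smooth map $\Psi : (\vt\cap\po(n))\times \vt_n \to \Mf_n$, $(Z_0,Z_n)\mapsto \exp(Z_n^{*})\exp(2Z_0)\exp(Z_n)$. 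Combined with the injectivity already proved and the fact that $\Vf/\Vf_0$ is diffeomorphic to $(\vt\cap\po(n))\times\vt_n$ via \eqref{dcmpv}, the map in \eqref{tresette} is bijective and smooth, and its inverse coincides (through the parametrization $\Psi$) with a smooth map onto $\Vf/\Vf_0$.

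The step requiring the most care is verifying that $\Psi$ is a smooth \emph{embedding}, i.e.\ that its differential is injective everywhere, so that its image is a smooth submanifold of $\Mf_n$ and the induced inverse map is smooth. At a point $(Z_0,Z_n)$ this reduces, after translating by right/left multiplication, to computing the differential of $v\mapsto v^{*}v$ and using the description of Jacobi fields from Proposition~\ref{Prop2.1}: the tangent directions coming from $\vt\cap\po(n)$ and from $\vt_n$ are sent, via the exponential, to linearly independent vectors in $T_{v^{*}v}\Mf_n$ because $(\vt+\vt^{*})\cap\po(n) = \{Z+Z^{*}\mid Z \in \vt\}$ as noted in Remark~\ref{rem3.1}, and the decomposition $\vt = \vt_0 \oplus \vt'$ is direct. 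This is the only nontrivial point; everything else is formal.
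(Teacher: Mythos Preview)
Your argument is correct, but it is considerably more elaborate than the paper's. The paper dispatches the lemma in one line via the orbit--stabilizer theorem: the (right) action $\zetaup\mapsto v^{*}\zetaup\,v$ of $\Vf$ on $\Mf_n$ has $N$ as the orbit of $e=\Id_n$ and $\Vf_0$ as the stabilizer of $e$, so $\Vf_0\backslash\Vf\simeq N$. Since the action is algebraic (equivalently, smooth with closed orbits), this isomorphism is automatically a diffeomorphism, and no separate analysis of the differential is needed. Your route---checking well-definedness, bijectivity, and then immersivity of the explicit parametrization $\Psi$---recovers the same conclusion by hand; the injectivity of $d\Psi$ at a general point is really just equivariance under the $\Vf$-action combined with the computation at $e$ (where $d(v\mapsto v^{*}v)|_{e}(Z)=Z+Z^{*}$ has kernel $\vt_0$), so the appeal to Jacobi fields is not needed here. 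What your approach buys is an explicit coordinate description of $N$, which the paper in fact records separately as the next lemma (the diffeomorphism~\eqref{trecinque}); what the paper's approach buys is brevity and a clean reason why the smooth structure comes for free.
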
 
\begin{proof}
 In fact the right action $v\cdot\zetaup=v^*\cdot\zetaup\cdot{v}$ 
 of $\Vf$ on 
 $N$ is transitive and $\Vf_0$ is the stabilizer
 of~\mbox{$e=\Id_n.$}
\end{proof}
\begin{lem}
 The map 
\begin{equation}\label{trecinque}
 \vt'\ni (Z_0+Z_n)\longrightarrow \exp(Z_n^*)\cdot\exp(Z_0)\cdot\exp(Z_n)\in
 N 
\end{equation}
is a diffeomorphism. In particular, $N$ is diffeomorphic to a Euclidean space.
\end{lem}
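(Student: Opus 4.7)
The plan is to factor the map in \eqref{trecinque} through the isomorphism $\Vf/\Vf_0\xrightarrow{\sim} N$ from \eqref{tresette}. To this end I would introduce the auxiliary map
\[
\alpha \colon \vt' \longrightarrow \Vf/\Vf_0, \qquad Z_0+Z_n \longmapsto [\exp(\tfrac{1}{2}Z_0)\cdot\exp(Z_n)],
\]
and denote by $\beta\colon\Vf/\Vf_0\to N$ the diffeomorphism $[v]\mapsto v^*v$ just established. Since $Z_0\in\vt\cap\po(n)$ is Hermitian symmetric, $\exp(\tfrac12 Z_0)^*=\exp(\tfrac12 Z_0)$ and $\exp(\tfrac12 Z_0)^2=\exp(Z_0)$, while $\exp(Z_n)^*=\exp(Z_n^*)$. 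A direct calculation then gives
\[
\beta\bigl(\alpha(Z_0+Z_n)\bigr)=\exp(Z_n)^*\exp(\tfrac12 Z_0)^2\exp(Z_n)=\exp(Z_n^*)\exp(Z_0)\exp(Z_n),
\]
showing that \eqref{trecinque} coincides with the composition $\beta\circ\alpha$.

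It therefore suffices to prove that $\alpha$ is a diffeomorphism. For this I would invoke the Cartan--Levi-Chevalley decomposition \eqref{dcmpv}, which asserts that
\[
\Vf_0 \times \vt' \ni (u, X_0+X_n) \longmapsto u\cdot\exp(X_0)\cdot\exp(X_n) \in \Vf
\]
is a diffeomorphism. Composing with the quotient projection $\Vf\twoheadrightarrow\Vf/\Vf_0$ kills the $\Vf_0$-factor and yields a diffeomorphism $\vt'\ni(X_0,X_n)\mapsto[\exp(X_0)\exp(X_n)]\in\Vf/\Vf_0$; precomposing with the $\R$-linear automorphism $(Z_0,Z_n)\mapsto(\tfrac12 Z_0, Z_n)$ of $\vt'$ produces exactly $\alpha$. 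Hence $\alpha$, and therefore $\beta\circ\alpha$, is a diffeomorphism.

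The final assertion that $N$ is diffeomorphic to a Euclidean space is then immediate, since $\vt'=(\vt\cap\po(n))\oplus\vt_n$ is a real vector space. There is no genuine obstacle in this argument; the only point that requires care is the factor $\tfrac12$ that enters because the squaring $v\mapsto v^*v$ doubles the ``polar'' exponent, which is why the map \eqref{trecinque} has $Z_0$ rather than $2Z_0$ in the middle factor.
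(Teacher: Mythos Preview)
Your proof is correct and follows essentially the same approach as the paper: both arguments deduce the lemma from the decomposition \eqref{dcmpv} together with the isomorphism \eqref{tresette}. The paper's proof is terser---it simply observes that the map is smooth and bijective, with smooth inverse obtained by composing the inverses of \eqref{tresette} and \eqref{dcmpv}---while you make the factorization $\beta\circ\alpha$ explicit and track the factor $\tfrac12$ on $Z_0$ carefully, which is a helpful clarification of exactly the same idea.
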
 
\begin{proof}
 In fact, \eqref{trecinque} is smooth and bijective
and its inverse 
can be computed by using the diffeomorphisms $\Vf/\Vf_0\simeq\vt'$ 
of \eqref{dcmpv}, and \eqref{tresette}.
\end{proof}

\begin{lem}\label{tubolar}
We can find a real $r>0$ such that the map 
\begin{equation}\label{treotto} \lambdaup:
  \vt'\times\mt_0\ni(Z_0+Z_n,H)\longrightarrow
  \exp(Z_n^*)\exp(Z_0)\exp(H)\exp(Z_0)\exp(Z_n) \in\Mf_n
\end{equation}
is a  diffeomorphism of $\{\|H\|<r\}$ onto $\{p\in\Mf_n\mid \dist(p,N)<r\}.$
\end{lem}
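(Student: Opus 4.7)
The plan is to identify $\lambda$ with the normal exponential map of the submanifold $N\subset\Mf_n$ in convenient coordinates. Set $v=\exp(Z_0)\exp(Z_n)\in\Vf$, so that $v^{*}=\exp(Z_n^{*})\exp(Z_0)$ and $\lambda(Z_0+Z_n,H)=v^{*}\exp(H)\,v$; composing \eqref{dcmpv} with \eqref{tresette} shows that $\mu\colon\vt'\to N$, $Z_0+Z_n\mapsto v^{*}v$, is a diffeomorphism. The right action $R_w(p)=w^{*}pw$ of $\SL_n(\C)$ on $\Mf_n$ is isometric (a direct check with $g_p(X,Y)=\trac(p^{-1}Xp^{-1}Y)$), so $t\mapsto v^{*}\exp(tH)\,v$ is the geodesic issued from $q=\mu(Z_0+Z_n)$ with initial velocity $v^{*}Hv$. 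Since $\mt_0$ is the $g_e$-orthogonal complement of $T_eN=(\vt+\vt^{*})\cap\po(n)$ inside $\po(n)$ and $R_v$ is an isometry, this velocity lies in $v^{*}\mt_0 v=(T_qN)^{\perp}$; thus $\lambda(Z_0+Z_n,H)$ is the time-one value of the orthogonal geodesic leaving $N$ at $q$ with speed $\|H\|$.

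First I will verify that $d\lambda$ is an isomorphism at every zero-section point $(Z_0+Z_n,0)$: along the $\vt'$-direction it equals $d\mu$, an isomorphism onto $T_qN$; along $\mt_0$ it is the isometry $H\mapsto v^{*}Hv$ onto $(T_qN)^{\perp}$. By the inverse function theorem, $\lambda$ is a local diffeomorphism near each zero-section point. The hard part is to upgrade this to a \emph{uniform} radius $r>0$, since $\vt'$ is non-compact. For this I will exploit $\Vf$-equivariance: for $w\in\Vf$, the decomposition \eqref{dcmpv} uniquely writes $vw=k'\exp(Z_0')\exp(Z_n')$ with $k'\in\Vf_0$, whence
\[
R_w\bigl(\lambda(Z_0+Z_n,H)\bigr)=\lambda\bigl(Z_0'+Z_n',\;{k'}^{*}Hk'\bigr).
\]
Since $\Vf_0\subset\SU(n)$ preserves $\vt+\vt^{*}$ by $\Ad$ and therefore acts on $\mt_0$ by norm-preserving orthogonal transformations, the germ of $\lambda$ at any $(Z_0+Z_n,0)$ is conjugate to its germ at $(0,0)$, so a radius that works at one base point works at all of them.

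Finally, the isometry $R_v$ gives $\dist\bigl(\lambda(Z_0+Z_n,H),v^{*}v\bigr)=\dist(\exp(H),e)=\|H\|$, so $\lambda$ sends $\{\|H\|<r\}$ into $\{\dist(\cdot,N)<r\}$. For the reverse inclusion I will use that $N$ is closed in $\Mf_n$ (since $\Vf_0$ is compact and $\Vf$ is closed in $\SL_n(\C)$): any $p$ with $\dist(p,N)<r$ admits a nearest point $q\in N$, and first variation forces the minimising geodesic to meet $N$ orthogonally at $q$. Writing $q=\mu(Z_0+Z_n)$ and its initial velocity as $v^{*}Hv$ with $H\in\mt_0$ produces $p=\lambda(Z_0+Z_n,H)$ with $\|H\|<r$. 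Injectivity of $\lambda$ on the tube then follows, for $r$ small, from uniqueness of the nearest point (a consequence of the local-diffeomorphism property above) and from injectivity of the Hermitian exponential on $\mt_0\subset\po(n)$.
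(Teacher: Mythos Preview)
Your approach differs from the paper's. You identify $\lambda$ with the normal exponential of $N\subset\Mf_n$ and use the $\Vf$-equivariance $R_w\circ\lambda=\lambda\circ\Phi_w$ to transport the inverse-function-theorem neighborhood at $(0,0)$ to every zero-section point; since $\Phi_w$ acts on the $\mt_0$-factor via $H\mapsto k'^{*}Hk'$ with $k'\in\Vf_0\subset\SU(n)$, the normal radius $r$ is preserved. The paper instead computes $d\lambda$ directly at points $(0,H)$ with $H\neq 0$, using the Jacobi-field machinery of \S\ref{kiljac}: by Lemma~\ref{lemma3.2.13} one has $\|\thetaup_Z(1)-J_X(1)\|^2\geq\|Z+Z^*\|^2+2(H\mid[Z,Z^*])$, and for $Z\in\vt'$ the inequality $\|Z\|\leq\|Z+Z^*\|$ gives $|(H\mid[Z,Z^*])|\leq\|H\|\,\|Z+Z^*\|^2$, so $d\lambda_{(0,H)}$ is injective for $\|H\|$ small. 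Your route is softer and bypasses the Jacobi estimates; the paper's gives an explicit handle on $r$ and is the template sharpened in Propositions~\ref{proptredodici} and~\ref{proptretredici}.

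One step in your argument is too quick. You claim uniqueness of the nearest point is ``a consequence of the local-diffeomorphism property above,'' but local injectivity on each $W_Z\times\{\|H\|<r\}$ does not by itself exclude $\lambda(Z_1,H_1)=\lambda(Z_2,H_2)$ with $\mu(Z_2)$ far from $\mu(Z_1)$ in $N$. To close this, shrink $r$ once more: pick $\delta>0$ with $\lambda$ a diffeomorphism on $\{\|Z\|<\delta\}\times\{\|H\|<r_0\}$, then take $r<r_0$ small enough that $N\cap\overline{B(e,2r)}\subset\mu(\{\|Z\|<\delta\})$. If $q'\in N$ is any nearest point to $\exp(H)$ with $\|H\|<r$, then $\dist(q',e)\leq 2\|H\|<2r$ forces $q'$ into the local chart, and injectivity there gives $q'=e$; equivariance propagates this to every fiber, and injectivity of $\exp$ on $\po(n)$ finishes. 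Alternatively, you can conclude as the paper does: having a surjective local diffeomorphism onto $\{\dist(\cdot,N)<r\}$, verify properness (if $\lambda(Z_k,H_k)\to p$ then $\dist(\mu(Z_k),p)<r+\dist(p_k,p)$ is bounded, hence so is $Z_k$) and invoke simple connectedness of source and target.
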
 
\begin{proof} By \eqref{equa3.2}, $\lambdaup$ is a local diffeomorphism at all points where 
it  has an injective differential.
 By using the isometries $p\to{z}^*\! \cdot\!{p}\!\cdot\!{z}$ of $\Ml_n$, 
 we may reduce to points $(0,H)$, 
 where,  
 to compute the differential, we can use the
 Jacobi vector fields $\thetaup_Z$ and $J_X $ 
 on $\gammaup_H$, that where defined in~\S\ref{kiljac}.
Indeed, for $(Z,X)\in\vt'\times\mt_0,$  
 $d\lambdaup(0,H)(Z,0)=\thetaup_Z(1)$  
 and $d\lambdaup(0,H)(0,X)=J_X(1).$
 Moreover, the maps ${\vt'\ni{Z}\to\thetaup_Z(1)\in{T}_{\exp(H)}\Ml_n}$ and 
 ${\mt_0\ni{X}\to{J}_X(1)\in{T}_{\exp(H)}\Ml_n}$ both are injective. 
Thus it suffices to verify that \mbox{$\thetaup_Z(1)\neq{J}_X(1)$} when $Z$ and $X$ are 
not zero. 
 By 
 Lemma~\ref{lemma3.2.13}, 
\begin{equation*}
 \|J_X(1)-\thetaup_Z(1)\|^2\geq \| Z+Z^*\|^2 + 2(H| [Z,Z^*]),
 \quad\forall (Z,X)\in\vt\times\mt_0.
\end{equation*}
For $Z\in\vt',$ we have $\|Z\|=\|Z^*\|\leq\|Z+Z^*\|.$
Thus
\begin{equation*}
 \left|(H|[Z,Z^*])\right| \leq \|H\| \cdot \| Z+Z^*\|^2.
\end{equation*}
This implies that, for some $r>0,$
 \eqref{treotto} defines a local diffeomorphism, and hence a smooth covering,
of $\vt'\times\{\|H\|<r\}$ onto $\{p\in\Mf_n\mid \dist(p,N)<r\}.$ This is in fact a 
global diffeomorphism 
because both spaces are simply connected.
\end{proof} 
Set 
\begin{gather}\label{granvprimo}
 \Vf'=\{\exp(Z_0)\exp(Z_n)\mid Z_0+Z_n\in\vt'\}
 \intertext{and consider the map}\label{treundici}
 \muup: \SU(n)\times\mt_0\times\Vf'\ni (u,X,v)\longrightarrow u\cdot\exp(X)\cdot{v}\in\SL_n(\C).
\end{gather}

\begin{prop}\label{propo3.3.4}
The map \eqref{treundici} is onto. \par 
There is a real $r>0$ for which $\muup$ is a diffeomorphism of $\{\|X\|<r\}$
onto the open manifold $\{\zetaup\in\SL_n(\C)\mid \dist(\zetaup^*\zetaup,N)<2r\}.$
\end{prop}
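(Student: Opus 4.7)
My plan is to derive Proposition~\ref{propo3.3.4} in two stages. First I will upgrade Lemma~\ref{tubolar} to show that the map $\lambdaup$ is globally surjective onto $\Mf_n$; second, I will combine this with the Cartan decomposition of $\SL_n(\C)$ and with the local diffeomorphism statement of Lemma~\ref{tubolar} to extract both the surjectivity and the diffeomorphism property of $\muup$.

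For the global surjectivity of $\lambdaup$, fix $p\in\Mf_n$. Because $\Mf_n$ is a complete finite-dimensional Riemannian manifold, its closed balls are compact, and therefore the restriction of $\dist(p,\cdot)$ to the closed submanifold $N$ is proper and attains a minimum at some $q_0\in N$. By \eqref{trecinque} I may write $q_0=v^*v$ uniquely with $v\in\Vf'$. The first variation formula then forces the initial velocity of the minimizing geodesic from $q_0$ to $p$ to be orthogonal to $T_{q_0}N$. At the base point $e$, Remark~\ref{rem3.1} gives $T_eN=(\vt+\vt^*)\cap\po(n)$, whose orthogonal complement in $T_e\Mf_n=\po(n)$ is precisely $\mt_0$. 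Transporting by the isometry $w\mapsto v^*wv$ of $\Mf_n$, which sends $e$ to $q_0$ and stabilizes $N$, I conclude $p=v^*\exp(H)v=\lambdaup(v,H)$ for some $H\in\mt_0$.

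Given this, surjectivity of $\muup$ is quick: for $\zetaup\in\SL_n(\C)$ I apply the previous step to $p=\zetaup^*\zetaup$, set $X=H/2\in\mt_0$, and note $(\zetaup v^{-1})^*\zetaup v^{-1}=\exp(2X)$; the Cartan decomposition then produces a unique $u\in\SU(n)$ with $\zetaup v^{-1}=u\exp(X)$, so $\zetaup=\muup(u,X,v)$. For the diffeomorphism statement I take the $r_0>0$ provided by Lemma~\ref{tubolar} and set $r=r_0/2$: the identity $\muup(u,X,v)^*\muup(u,X,v)=\lambdaup(v,2X)$ converts $\|X\|<r$ into $\dist(\muup(u,X,v)^*\muup(u,X,v),N)<r_0=2r$, matching domain and codomain. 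The inverse is then explicit: the smooth inverse of $\lambdaup$ supplied by Lemma~\ref{tubolar} recovers $(v,2X)$ from $\zetaup^*\zetaup$, and then $u=\zetaup v^{-1}\exp(-X)\in\SU(n)$, depending smoothly on $\zetaup$.

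The main obstacle I anticipate is the first stage. Example~\ref{esampduesette} already shows that the straightforward extension of Lemma~\ref{tubolar} to all of $\vt'\times\mt_0$ breaks down (Jacobi fields may vanish, so the normal exponential map need not even be a local diffeomorphism at large $H$). The nearest-point / normal-exponential argument sidesteps this by producing $v$ and $H$ geometrically, relying crucially on $\Vf$ acting by isometries so that the normal-bundle computation at every $q_0\in N$ reduces to the elementary one at $e$. Once that step is in place, the rest is routine bookkeeping with Cartan decomposition.
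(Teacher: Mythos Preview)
Your argument is correct and follows essentially the same route as the paper: a nearest-point argument on the properly embedded submanifold $N\subset\Mf_n$, orthogonality of the minimizing geodesic via the first variation, reduction to the base point by the isometric $\Vf$-action to identify the normal direction with $\mt_0$, and then Cartan decomposition to extract $u\in\SU(n)$. Your organization (first proving global surjectivity of $\lambdaup$, then deducing that of $\muup$, and finally writing down the smooth inverse explicitly from Lemma~\ref{tubolar}) is a mild repackaging of the paper's proof, but the content is the same.
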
 
\begin{proof} The set
 $N=\{z^*z\mid z\in\Vf\}$ 
 is a properly embedded smooth submanifold of~$\Mf_n$. Hence, for each
$p\in\Mf_n$, there is a $z_p\in\Vf$ with
\begin{equation*}
 \dist(p,z_p^*z_p)=\dist(p,N).
\end{equation*}
The geodesic joining $z_p^*z_p$  to $p$ has the form 
$[0,1]\ni{t}\to\gammaup(t)={z}^*_p\exp(tH)z_p$ for some $H\in\po(n)$, 
and $\dot{\gammaup}(0)$ is orthogonal to $N$ at $z^*_pz_p$. The isometry
$q\to {z_p^*}^{-1}q\,z_p^{-1}$ maps $N$ into itself, $z_p^*z_p$ to $e$ and
$\dot{\gammaup}(0)$ to $H$. Thus $H\in{T}_e\Mf_n=\po(n)$ 
belongs to $\mt_0$.
\par This shows that, 
if $\zetaup\in\SL_n(\C)$ and $z_p^*z_p$ is the nearest point in $N$ 
to $p=\zetaup^*\zetaup,$ 
then
\begin{equation*}
 p=\zetaup^*\zetaup=z_p^*\exp(H)z_p,\;\;\text{for some $z_p\in\Vf'$ and  $H\in\mt_0.$}
\end{equation*}
The matrix
$ u=\zetaup\cdot{z}_p^{-1}\cdot\exp(-H/2)$
belongs to $\SU(n)$. Indeed
\begin{align*}
 u^*u&=\exp(-H/2)\cdot [z_p^{-1}]^*\cdot \zetaup^*\cdot\zetaup\cdot{z}_p^{-1}\cdot\exp(-H/2)\\
 &=\exp(-H/2)\cdot [z_p^{-1}]^*\cdot z_p^*\cdot\exp(H)\cdot{z}_p\cdot{z}_p^{-1}\cdot\exp(-H/2)=\Id_n.
 \end{align*}
 Since $\zetaup=u\cdot\exp(H/2)\cdot{z}_p,$ this proves that \eqref{treundici} is onto.
\par 
The second part of the statement is then
a consequence of Lemma~\ref{tubolar}.
\end{proof}
\begin{cor}
The map \begin{equation}\label{eqq3.38}
\SU(n)\times\mt_0\ni (x,X) \longrightarrow \pi(x\cdot\exp(X))\in\SL_n(\C)
/\Vf,
\end{equation}
where $\pi:\SL_n(\C)\to\SL_n(\C)/\Vf$ is the projection onto the quotient, is onto.
By passing to the quotient, it defines a surjective smooth map 
\begin{equation}
 \SU(n)\times_{\Vf_0}\mt_0 \longrightarrow \SL_n(\C)/\Vf,
\end{equation}
where $\SU(n)\times_{\Vf_0}\mt_0$ is the quotient of $\SU(n)\times\mt_0$ modulo the 
equivalence relation 
\vspace{5pt}
\par\centerline{\qquad\qquad\qquad $(x,X)\sim (x\cdot{u},u^*Xu)$ \; for \;
$x\in\SU(n),$ $X\in\mt_0$ and $u\in\Vf_0.$\qed }
 \end{cor}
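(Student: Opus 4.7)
The plan is to derive the corollary as a direct consequence of Proposition~\ref{propo3.3.4}, so only routine verifications remain.

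First I would establish surjectivity of the map \eqref{eqq3.38}. Given any $\zetaup\in\SL_n(\C)$, Proposition~\ref{propo3.3.4} produces $u\in\SU(n)$, $X\in\mt_0$ and $v\in\Vf'$ with $\zetaup=u\cdot\exp(X)\cdot v$. Since $\Vf'\subset\Vf$, applying $\pi$ kills the factor $v$, so $\pi(\zetaup)=\pi(u\cdot\exp(X))$. Smoothness of \eqref{eqq3.38} is immediate, since it is the composition of the smooth exponential and multiplication in $\SL_n(\C)$ with the smooth projection $\pi$.

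Next I would check that the action $(x,X)\cdot u=(x\cdot u, u^*Xu)$ of $\Vf_0$ on $\SU(n)\times\mt_0$ is well defined, i.e.\ that $\mt_0$ is stable under $\Ad(\Vf_0)$. Because $\Vf_0\subset\SU(n)$, conjugation by $u\in\Vf_0$ preserves $\po(n)$ and the Hermitian trace form on $\slt_n(\C)$; moreover, $\Ad(\Vf_0)$ preserves $\vt$ (it is the Lie algebra of the group generated by $\Vf_0$ and the analytic subgroup with algebra $\vt$), hence it preserves $\vt^*$ and therefore $\mt_0=(\vt+\vt^*)^\perp\cap\po(n)$. Under this action, $\SU(n)\times_{\Vf_0}\mt_0$ acquires the structure of the smooth fiber bundle associated with the principal $\Vf_0$-bundle $\SU(n)\to\SU(n)/\Vf_0$.

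Then I would verify that \eqref{eqq3.38} is constant on $\Vf_0$-orbits. For $u\in\Vf_0$ one has $u^*=u^{-1}$, so
\begin{equation*}
 (x\cdot u)\cdot\exp(u^*Xu)=x\cdot u\cdot u^{-1}\exp(X)u = x\cdot\exp(X)\cdot u,
\end{equation*}
and since $u\in\Vf_0\subset\Vf$ the right--hand side projects under $\pi$ to $\pi(x\cdot\exp(X))$. Hence \eqref{eqq3.38} factors through a map $\SU(n)\times_{\Vf_0}\mt_0\to\SL_n(\C)/\Vf$ which is smooth (being induced by a smooth $\Vf_0$-invariant map via the submersion onto the associated bundle) and still surjective, as surjectivity is preserved by passing to the quotient.

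I do not expect a serious obstacle: everything is a direct bookkeeping once Proposition~\ref{propo3.3.4} is in hand. The only point requiring a brief argument is the $\Ad(\Vf_0)$-invariance of $\mt_0$, which rests on $\Vf_0\subset\SU(n)$ preserving both the real form structure and the trace inner product underlying the definition \eqref{emmezero}.
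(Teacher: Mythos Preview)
Your proposal is correct and matches the paper's approach: the corollary is stated with a \qed\ and no proof, so the authors regard it as an immediate consequence of Proposition~\ref{propo3.3.4}, which is exactly how you derive it. One small simplification: the $\Ad(\Vf_0)$-invariance of $\vt$ follows directly from $\Vf_0\subset\Vf$ (so $\Ad(u)$ for $u\in\Vf_0$ is an inner automorphism of $\Vf$), without needing to invoke any larger generated group.
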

 \subsection{Decomposition of \texorpdfstring{$\Kf$}{TEXT}}
Let $\Vf$ be a closed subgoup of the 
complexification $\Kf$ of a compact Lie group $\Kf_0.$
 We can assume that in turn
$\Kf$ is a linear subgroup of $\SL_n(\C)$, with $\Kf_0=\Kf\cap\SU(n),$ and 
$\If_0=\Vf\cap\SU(n)$ a maximal compact subgroup of $\Vf$. 
We obtain:
\begin{prop} With $\ft_0=\mt_0\cap\kt,$ we have the commutative 
diagram with surjective arrows 
\begin{equation} \label{eq:3.48} 
{{\xymatrix{
\Kf_0\times\ft_0 \ar[rr]\ar[rd] &&
\Kf_0\times_{\If_0}\ft_0\ar[ld] \\
& \Kf/\Vf,}}}
\end{equation}
where the horizontal arrow is the projection onto the quotient,
the left one is obtained by restricting \eqref{eqq3.38}, and
the right one by passing to the quotient.
\end{prop}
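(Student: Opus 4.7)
The plan is to reduce the surjectivity of the restricted map $\Kf_0\times\ft_0\to\Kf/\Vf$ to Proposition~\ref{propo3.3.4} via the uniqueness of Cartan decompositions in $\SL_n(\C)$, and then verify that the resulting surjection factors through the quotient $\Kf_0\times_{\If_0}\ft_0$ and that both triangles commute.

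For surjectivity, take $\zeta\in\Kf$, viewed inside $\SL_n(\C)$. Proposition~\ref{propo3.3.4} yields a factorization $\zeta=u\!\cdot\!\exp(X)\!\cdot\!v$ with $u\in\SU(n)$, $X\in\mt_0$, and $v\in\Vf'\subset\Vf$, so $u\exp(X)=\zeta v^{-1}$ lies in $\Kf$. Since $\Kf$ is reductive with $\Kf_0=\Kf\cap\SU(n)$ as its maximal compact subgroup, and since $z\mapsto(z^*)^{-1}$ restricts to a Cartan involution on $\Kf$, the element $\zeta v^{-1}\in\Kf$ also admits a Cartan factorization $u'\exp(Y)$ with $u'\in\Kf_0$ and $Y\in\po(n)\cap\kt$. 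The uniqueness of Cartan decompositions in $\SL_n(\C)$ forces $u=u'\in\Kf_0$ and $X=Y\in\po(n)\cap\kt$; combined with $X\in\mt_0$, this gives $X\in\mt_0\cap\kt=\ft_0$. Therefore $\zeta\Vf=\pi(u\exp(X))$ lies in the image of \eqref{eqq3.38} restricted to $\Kf_0\times\ft_0$.

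To pass to the quotient, I note that $\If_0=\Vf_0\cap\Kf_0\subset\Kf_0$ and that conjugation by $u\in\If_0$ preserves both $\mt_0$ (as used already in the preceding corollary) and $\kt$ (because $\If_0\subset\Kf$), hence preserves $\ft_0$. The identity $xu\cdot\exp(u^{-1}Xu)=x\exp(X)\cdot u$, together with $u\in\Vf$, shows that $(x,X)\mapsto\pi(x\exp(X))$ is constant on $\If_0$-orbits, so it factors through $\Kf_0\times_{\If_0}\ft_0$. The horizontal arrow is the canonical projection by construction, so both triangles of \eqref{eq:3.48} commute, and surjectivity of the diagonal composition implies surjectivity of the map out of $\Kf_0\times_{\If_0}\ft_0$.

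The only non-formal ingredient here is the compatibility of Cartan decompositions, namely that $\Kf=\Kf_0\cdot\exp(\po(n)\cap\kt)$ is a Cartan decomposition of $\Kf$ inherited from the ambient one; this is guaranteed by the hypothesis $\Kf_0=\Kf\cap\SU(n)$, which forces the ambient Cartan involution to restrict to a Cartan involution of $\kt$. Everything else is a direct transcription of Proposition~\ref{propo3.3.4}.
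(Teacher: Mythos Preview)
Your proof is correct and follows essentially the same approach as the paper. The paper's version is marginally more direct---it checks $\exp(2X)={z^*}^{-1}\zetaup^*\zetaup\,{z}^{-1}\in\exp(\mt_0)\cap\Kf=\exp(\ft_0)$ by algebra and then reads off $u\in\Kf\cap\SU(n)=\Kf_0$---whereas you invoke the Cartan decomposition of $\Kf$ and uniqueness in $\SL_n(\C)$; but this is the same compatibility-of-Cartan-decompositions argument packaged slightly differently, and your additional verification that the map factors through $\Kf_0\times_{\If_0}\ft_0$ is a detail the paper leaves implicit.
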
 
We denoted by $\Kf_0\times_{\If_0}\!\ft_0$ the quotient of the product $\Kf_0\times\ft_0$
by the equivalence relation $(x,X)\sim (x\cdot{u},\Ad(u^{-1})(X))$ for $x\in\Kf_0,$
$X\in\ft_0$ and $u\in\Vf_0$. The right arrow maps the equivalence class of $(x,X)$
to $\pi(x\cdot\exp(X))\in\Kf/\Vf\subset\SL_n(\C)/\Vf.$
\begin{proof}
It is sufficient to follow the proof of Proposition~\ref{propo3.3.4}
and check that, for $\zetaup\in\Kf$, we obtain 
$X\in\ft_0$ and $x\in\Kf_0.$ \par 
In fact, 
in this case, $\zetaup^*\zetaup=z^*\exp(2X)z\in\Kf\cap\Pf_0(n),$ with $z\in\Vf,$
implies that $\exp(2X)={z^*}^{-1}\zetaup^*\,\zetaup\,{z}^{-1}\in\exp(\mt_0)\cap\Kf=
\exp(\ft_0)$.
\end{proof}
We have the analogous of Proposition~\ref{propo3.3.4}.
\begin{prop} \label{proptreotto}
The map 
\begin{equation}
 \Kf_0\times\ft_0\times\Vf'\ni (u,X,v)\longrightarrow u\cdot\exp(X)\cdot{v}
 \in\Kf
\end{equation}
is always surjective and there is $r_0>0$ such that, for all $0<r\leq{r}_0,$ 
 it is a diffeomorphism of $\{\|X\|<r\}$
onto a tubular neighborhood of~$M_0=\Kf_0/\Vf_0$ in~$M_-.$ \qed
\end{prop}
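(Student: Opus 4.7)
The plan is to deduce the statement directly from Proposition~\ref{propo3.3.4} applied in the ambient group $\SL_n(\C)$, by verifying that when the product $u\exp(X)v$ lies in $\Kf$ the three factors furnished there already lie in the expected subsets $\Kf_0$, $\ft_0$ and $\Vf'\subset\Kf$. The one identity that drives everything is
\[
\Kf\cap\exp(\mt_0)=\exp(\ft_0),
\]
which follows from the Cartan decomposition of the reductive group $\Kf$ combined with the injectivity of $\exp$ on Hermitian matrices: if $X\in\mt_0\subset\po(n)$ and $\exp(X)\in\Kf$, then $\exp(X)\in\Kf\cap\Pfo(n)=\exp(\kt\cap\po(n))$, so $X\in\kt\cap\mt_0=\ft_0$.

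First I would prove surjectivity. Given $\zetaup\in\Kf$, Proposition~\ref{propo3.3.4} yields $u\in\SU(n)$, $X\in\mt_0$ and $v\in\Vf'\subset\Vf\subset\Kf$ with $\zetaup=u\exp(X)v$. From
\[
\exp(2X)=(v^{*})^{-1}\zetaup^{*}\zetaup\,v^{-1}\in\Kf\cap\exp(\mt_0)=\exp(\ft_0)
\]
we read off $X\in\ft_0$, and then $u=\zetaup\,v^{-1}\exp(-X)\in\Kf\cap\SU(n)=\Kf_0$. This is essentially the same computation that proves the proposition immediately preceding, now carried out for the sharper factor $v\in\Vf'$ rather than $v\in\Vf$.

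For the diffeomorphism claim I would fix $r_0>0$ as in Proposition~\ref{propo3.3.4} and consider the restriction of the ambient map $\muup$ to the embedded submanifold $\Kf_0\times\ft_0\times\Vf'$ of $\SU(n)\times\mt_0\times\Vf'$. By the surjectivity step, on $\{\|X\|<r_0\}$ this restriction is a bijection onto the open subset $\{\zetaup\in\Kf:\dist(\zetaup^{*}\zetaup,N)<2r_0\}$ of $\Kf$. Being the restriction of a diffeomorphism to an embedded submanifold of the source that happens to map into the embedded submanifold $\Kf$ of the target, it is itself a diffeomorphism onto its image.

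Finally, passing to the quotient $\pi:\Kf\to M_-=\Kf/\Vf$ eliminates the $\Vf'$-factor and leaves the map $(u,X)\mapsto\pi(u\exp(X))$ which factors through $\Kf_0\times_{\Vf_0}\ft_0$; for this it is essential that $\Vf_0$ preserves $\ft_0$ under the adjoint action, which it does because $\Vf_0\subset\SU(n)$ stabilizes $\vt$ and $\vt^{*}$ and preserves the trace inner product on $\po(n)$, hence stabilizes $\mt_0=(\vt+\vt^{*})^{\perp}\cap\po(n)$, and it also preserves $\kt$. Restricted to $\{X\in\ft_0:\|X\|<r_0\}$ this factored map is a diffeomorphism onto a tubular neighborhood of $M_0=\Kf_0/\Vf_0$ in $M_-$. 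The only non-bookkeeping step is the Cartan-decomposition identity displayed at the outset; once that is in hand, the whole proposition falls out of Proposition~\ref{propo3.3.4} by restriction and quotient, with no new analytic estimate needed.
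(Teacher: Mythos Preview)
Your argument is correct and is exactly the approach the paper intends: the proposition carries a bare \qed in the paper because it is meant to follow immediately from Proposition~\ref{propo3.3.4} together with the verification, already spelled out in the proof of the proposition just above (the one with diagram~\eqref{eq:3.48}), that $\exp(\mt_0)\cap\Kf=\exp(\ft_0)$ forces $X\in\ft_0$ and then $u\in\Kf_0$. Your write-up simply makes explicit what the paper leaves implicit, including the passage to the quotient needed to interpret the image as a tubular neighborhood in $M_-$.
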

It is known that the right arrow in \eqref{eq:3.48} \textit{is}  the Mostow fibration of $\Kf/\Vf$
when $\Vf$ is reductive 
(see e.g.~\cite{Most55,Tum}).  We give here a simple proof relying on the preparation done in
\S\ref{sec3}.
\begin{prop} \label{prop:3.4.6} 
If $\Vf$ is reductive, then the natural surjective map 
\begin{equation}\label{fibrherm}
\Kf_0\times_{\If_0}\ft_0 \to {M}_-=\Kf/\Vf\end{equation}
is a diffeomorphism.  
\end{prop}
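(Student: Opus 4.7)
The plan is to promote the surjective smooth map~\eqref{fibrherm} to a diffeomorphism by first showing that $\muup$ of Proposition~\ref{proptreotto} is globally a diffeomorphism $\Kf_0\times\ft_0\times\Vf'\to\Kf$ in the reductive case, and then descending through the quotient by~$\Vf$. The reductive hypothesis enters decisively through $\vt_n=\{0\}$: every $Z\in\vt'=\vt\cap\po(n)$ is Hermitian, so $Z^*=Z$ and $[Z,Z^*]=0$.

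First, I would upgrade the local statement of Lemma~\ref{tubolar} to a global diffeomorphism $\lambdaup\colon\vt'\times\mt_0\to\Mf_n$. The specialization of Lemma~\ref{lemma3.2.13} to Hermitian $Z$ reads
\begin{equation*}
 \|J_X(1)-\thetaup_Z(1)\|^2 \;=\; 4\|Z\|^2 + \|\thetaup_Z-J_X\|_H^2,
\end{equation*}
which is strictly positive for $Z\neq 0$; together with $J_X(1)\neq 0$ for $0\neq X\in\mt_0$ (no conjugate points on geodesics of the Cartan--Hadamard space $\Mf_n$), this shows that $d\lambdaup$ is injective everywhere, so $\lambdaup$ is a local diffeomorphism on its whole domain with no smallness restriction. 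For bijectivity I would use that $N=\exp(\vt\cap\po(n))$ is a complete totally geodesic submanifold of $\Mf_n$ when $\Vf$ is reductive, hence the foot of the perpendicular from any $p\in\Mf_n$ onto $N$ is unique; this yields a unique $v\in\Vf'$ with $v^*v$ nearest to $p$, and then a unique $H\in\mt_0$ such that $p=\lambdaup(v,H)$.

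This bijectivity immediately lifts to $\muup$: if $\muup(u_1,X_1,v_1)=\muup(u_2,X_2,v_2)$ with $u_i\in\SU(n)$, $X_i\in\mt_0$, $v_i\in\Vf'$, then applying $\zetaup\mapsto\zetaup^*\zetaup$ yields $\lambdaup(v_1,2X_1)=\lambdaup(v_2,2X_2)$, forcing $v_1=v_2$ and $X_1=X_2$ and hence $u_1=u_2$. Restricting to $\Kf$ via Proposition~\ref{proptreotto} gives that $\muup\colon\Kf_0\times\ft_0\times\Vf'\to\Kf$ is a diffeomorphism.

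Finally I would descend to~\eqref{fibrherm}. Suppose $u_1\exp(X_1)\Vf=u_2\exp(X_2)\Vf$ with $u_i\in\Kf_0$ and $X_i\in\ft_0$; decomposing the connecting $\Vf$-element as $u_0v'$ with $u_0\in\Vf_0$ and $v'\in\Vf'$ via~\eqref{dcmpv}, and using $\Vf_0$-invariance of $\mt_0$, the equality becomes $\muup(u_1,X_1,e)=\muup(u_2u_0,\Ad(u_0^{-1})X_2,v')$. The injectivity of $\muup$ then forces $v'=e$, $X_1=\Ad(u_0^{-1})X_2$, and $u_1=u_2u_0$, with $u_0=u_2^{-1}u_1\in\Kf_0\cap\Vf_0=\If_0$. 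This is precisely the equivalence defining $\Kf_0\times_{\If_0}\ft_0$, proving bijectivity of~\eqref{fibrherm}; combined with the local-diffeomorphism property inherited from $\muup$, it is a diffeomorphism. The main obstacle, and the point where reductiveness is indispensable, is the uniqueness of the nearest point of $N$, which relies on $N$ being totally geodesic---a property that genuinely fails for general $\Vf$, as Example~\ref{esampduesette} illustrates.
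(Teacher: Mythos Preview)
Your proof is correct and follows essentially the same strategy as the paper: both exploit that in the reductive case every $Z\in\vt'=\vt\cap\po(n)$ is Hermitian (so $[Z,Z^*]=0$), which makes the Jacobi-field estimate for $d\lambdaup$ valid without any smallness restriction on $H$, and then lift to the decomposition $\Kf=\Kf_0\cdot\exp(\ft_0)\cdot\Vf'$ and pass to the quotient. The only noteworthy difference is in how you upgrade the local diffeomorphism $\lambdaup_{\kt}$ to a global one: you argue geometrically via the totally geodesic property of $N=\exp(\vt\cap\po(n))$ and uniqueness of the nearest-point projection in a Cartan--Hadamard space, while the paper instead observes that a surjective local diffeomorphism onto the simply connected target $\Kf\cap\Pf_0(n)$ is a covering and hence a global diffeomorphism.
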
  
\begin{proof} 
In this case $\Vf,$ being 
algebraic and 
self-adjoint,  
has the Cartan decomposition 
$\Vf=\Vf_0\times\exp(\vt'),$ 
with $\vt'=\vt\cap\pt_0(n)$.
By Lemma~\ref{lem:3.13.3}, the map 
\begin{equation*} \lambdaup_{\kt}:
\vt'
 \times\ft_0\ni (Z,H)\longrightarrow \exp(Z^*)\cdot\exp(H)\cdot\exp(Z)\in 
 \Kf\cap\Pf_0(n) \end{equation*} 
is surjective.
Moreover, it is 
a local diffeomorphism at every point of $\vt'\times\ft_0$. In fact, we can reduce to prove this
fact at points $(0,H)$, where the differential at $(Z,X)$ is $J(1)$ for $J(t)=\thetaup_Z+J_X\in\Jf(H).$
Then $\|J(1)\|\geq\|J(0)\|=2\|Z\|>0$ for $Z\neq{0},$ while $J_X(1)\neq{0}$ if $X\neq{0}.$ 
Since $\kt\cap\po(n)=\vt'\oplus\ft_0$, this proves that $d \lambdaup_{\kt}(0,H)$ is a linear isomorphism. 
Thus, 
being a connected covering of a simply connected space, $\lambdaup_{\kt}$
is  a global diffeomorphism. \par 
Hence, for every $\zetaup\in\Kf$, there is a unique pair
$(Z,H)\in\vt'\times\ft_0$ such that 
\begin{equation*}
 \zetaup^*\cdot\zetaup=\exp(Z^*)\cdot\exp(H)\cdot\exp(Z);
\end{equation*}
then
$u=\zetaup\cdot\exp(-Z)\cdot\exp(-\tfrac{1}{2}H)\in\Kf_0$ and we obtain the 
direct product decomposition 
\begin{equation}\label{dec3.50}
 \Kf=\Kf_0\cdot \exp(\ft_0)\cdot\exp(\vt'),
\end{equation}
from which the statement follows.
\end{proof} 
The complex $\Kf$-homogeneous $M_-$ of
Proposition~\ref{prop:3.4.6} corresponds to 
an $M_-$ which
is  the Stein complexification of a totally real $\Kf_0$-homogeneous compact  $M_0$. 
An $M_0$ having a positive $CR$ dimension corresponds to 
a $\Vf$ having a nontrivial unipotent radical. \par 
Before investigating cases where, even though $\vt_n\neq{0},$ 
 \eqref{fibrherm} is nevertheless a diffeomorphism,
 we observe that, 
when we know that 
decomposition \eqref{treundici} is unique,
we can extract some extra information from the minimal distance
 characterization of $z^*_pz_p$ in the proof of Proposition~\ref{propo3.3.4}.
 For instance,
as a corollary of  Proposition~\ref{propo3.3.4}, we obtain the following
\begin{prop}
For $h\in\Pfo(n)$, 
 denote by $D_\ell(h)$ 
 the minor determinant of the first $\ell$ rows and columns of
 $h$. Set $D_0(h)=1$ 
 and let $0<\lambdaup_1(h)\leq\cdots\leq\lambdaup_n(h)$ be the eigenvalues of
 $h$. Then 
\begin{equation}
 \dist(h,e)={\sum}_{\ell=1}^n|\log(\lambdaup_\ell(h))|^2 
 \geq {\sum}_{\ell=1}^n |\log(D_\ell(h)/D_{\ell-1}(h))|^2.
\end{equation}
If $h$ is not diagonal, we have strict inequality.
\end{prop}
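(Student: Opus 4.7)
The plan is to specialize the setup of \S\ref{hermitianfiber} to the closed unipotent subgroup $\Vf = \Nf \subset \SL_n(\C)$ of unit upper triangular matrices. For this $\Vf$ we have $\vt = \vt_n$ strictly upper triangular traceless, $\Vf' = \Nf$, and $\mt_0 = (\vt+\vt^*)^\perp \cap \po(n)$ is the subspace of real diagonal traceless matrices. The map $\muup$ of \eqref{treundici} then coincides with the classical Iwasawa decomposition $\SL_n(\C) = \SU(n) \cdot \exp(\mt_0) \cdot \Nf$, which is a \emph{global} diffeomorphism, so $\muup$ is globally injective for this particular $\Vf$.

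For $h \in \Pfo(n)$, set $\zetaup = h^{1/2}$ and apply Iwasawa to obtain $\zetaup = u \exp(X) v$ with $u \in \SU(n)$, $X \in \mt_0$, $v \in \Nf$, whence $h = v^*\exp(H)v$ with $H = 2X \in \mt_0$ and $v \in \Nf$ uniquely determined. This is nothing but the $LDL^*$-factorization of $h$: comparing upper-left $\ell \times \ell$ principal minors of the two sides, and using that $v$ is unit upper triangular, one reads off $H_{\ell,\ell} = \log(D_\ell(h)/D_{\ell-1}(h))$, so that $\|H\|^2 = \sum_{\ell=1}^n |\log(D_\ell(h)/D_{\ell-1}(h))|^2$.

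Now let $N = \{v^*v \mid v \in \Nf\}$ as in \eqref{defenne}. By the argument in the proof of Proposition~\ref{propo3.3.4}, $v^*v$ is a nearest point of $N$ to $h$ with $\dist(h, v^*v) = \|H\|$, realized by the geodesic $t \mapsto v^*\exp(tH)v$. Global injectivity of $\muup$ forces this nearest point to be \emph{unique}: any other nearest point would, by the same perpendicular-geodesic argument, produce a second decomposition $h = v'^*\exp(H')v'$ with $v' \in \Nf$ and $H' \in \mt_0$, contradicting uniqueness of Iwasawa. Since $e = \Id_n \in N$, we obtain $\dist(h,e)^2 \geq \dist(h, N)^2 = \|H\|^2$, and combining with the eigenvalue formula $\dist(h,e)^2 = \sum_\ell |\log\lambdaup_\ell(h)|^2$ yields the stated inequality. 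If $h$ is not diagonal, then $v \neq \Id_n$ (since $\Nf \cap \SU(n) = \{\Id_n\}$), so $v^*v \neq e$; by uniqueness of the nearest point, $\dist(h,e) > \dist(h, v^*v) = \|H\|$, giving strict inequality. The only subtle step is this uniqueness of the nearest point in $N$, which is not explicitly asserted in Proposition~\ref{propo3.3.4} but is available here precisely because, for $\Vf = \Nf$, the Mostow-type map $\muup$ happens to be globally, and not merely locally, injective.
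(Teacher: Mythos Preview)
Your proof is correct and follows essentially the same strategy as the paper: specialize to $\Vf$ the unipotent upper triangular matrices, identify the nearest point in the relevant $\Vf$-orbit, and compare distances. The only cosmetic difference is that the paper works with the orbit $N_h=\{z^*hz\mid z\in\Vf\}$ and its point closest to $e$ (characterizing it via the perpendicularity condition $\trac((Z+Z^*)\Delta)=0$, which forces $\Delta$ diagonal, then invoking Gram--Schmidt for uniqueness), whereas you work dually with $N=\{v^*v\mid v\in\Vf\}$ and its point closest to $h$, invoking Iwasawa/$LDL^*$ uniqueness; these are isometric reformulations of one another and the Gram--Schmidt and Iwasawa arguments are the same computation.
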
 
\begin{proof}
 We take $\Vf$ equal to the group of unipotent upper triangular matrices in $\GL_n(\C)$.
 The element $\deltaup=e^{\Delta}\in{N}_h=\{z^*{h}z\mid z\in\Vf\}$, with $\Delta\in\po$, 
 at minimal distance from
 $e$ satisfies $\trac([Z+Z^*]\Delta)=0$ for all nilpotent upper triangular $Z$ and hence is diagonal.
 The unique diagonal $
 \deltaup= z^*hz$ in $N_h$ 
 is the one obtained by the Gram-Schmidt 
 orthogonalization procedure. The proof is complete.
\end{proof}
The orbit of a point $p\in\Ml_n$ by the group of unipotent upper triangular
matrices of $\SL_n(\C)$ is an example of a \emph{horocycle}
of maximal dimension 
in a symmetric space of noncompact type. We will generalize this
situation while 
outlining a class of subroups $\Vf$ for which $F_0=\exp(\ft_0)$
can be taken as the fiber of the $\Kf_0$-covariant fibration. 
\par
Following  \cite[p.17]{Ga72}, 
we call \textit{horocyclic} in $\kt$ the nilpotent subalgebras 
which are nilradicals of  parabolic subalgebras of $\kt.$
\begin{lem}\label{lemmatreundici}
Let $\qt$ be a parabolic subalgebra of $\slt_n(\C),$ with nilradical $\qt_n.$ 
Assume that $\qt\cap\qt^*$ is a reductive Levi factor
of $\qt$. Let $H\in\qt\cap\po(n).$ Then, for $Z_0\in\qt\cap\qt^*,$ $T\in\Ct_0(H)\cap\qt$ and
$Z_n\in\qt_n$ the Jacobi vector fields $J_1=\thetaup_{Z_0}+t\thetaup_T$ and $J_2=\thetaup_{Z_n}$
are orthogonal at all points of $\gammaup_H.$  
\end{lem}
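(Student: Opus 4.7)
The approach is to compute $g_{\gammaup_H(t)}(J_1(t),J_2(t))$ directly from the formula $g_p(X,Y)=\trac(p^{-1}Xp^{-1}Y)$ and to reduce the vanishing to the classical orthogonality of the trace form between the Levi factor and both nilradicals of a parabolic subalgebra.

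The first observation is that $H$ and $T$ both lie in the Levi factor $\qt\cap\qt^*$. Indeed, $H\in\qt$ is Hermitian ($H\in\po(n)$), so $H=H^*\in\qt^*$; likewise $T\in\qt$ is Hermitian, so $T\in\qt^*$ as well. Since $\qt\cap\qt^*$ is the reductive Levi factor and contains $H$, the three subspaces $\qt\cap\qt^*$, $\qt_n$, $\qt_n^*$ are all $\ad(H)$-stable (the first because $H$ lies in the Levi, the second because $\qt_n$ is an ideal of $\qt$, the third because $\qt_n^*$ is an ideal of $\qt^*$). Consequently, for every $s\in\R$, conjugation $A\mapsto e^{sH}Ae^{-sH}$ preserves each summand of the triangular decomposition $\slt_n(\C)=\qt_n^*\oplus(\qt\cap\qt^*)\oplus\qt_n$.

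Setting $p=e^{tH}=\gammaup_H(t)$ and $\widetilde{A}(t):=e^{tH/2}Ae^{-tH/2}$, the explicit form \eqref{defjei} together with $T^*=T$ and $[T,H]=0$ yields
\begin{equation*}
p^{-1/2}J_1(t)p^{-1/2}=\widetilde{Z}_0(t)+\widetilde{Z}_0(t)^{\,*}+2tT,\qquad p^{-1/2}J_2(t)p^{-1/2}=\widetilde{Z}_n(t)+\widetilde{Z}_n(t)^{\,*},
\end{equation*}
where by the stability remark $\widetilde{Z}_0(t),\,T\in\qt\cap\qt^*$, $\widetilde{Z}_n(t)\in\qt_n$ and $\widetilde{Z}_n(t)^{\,*}\in\qt_n^*$. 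By cyclicity of the trace,
\begin{equation*}
g_p(J_1(t),J_2(t))=\trac\bigl[(\widetilde{Z}_0(t)+\widetilde{Z}_0(t)^{\,*}+2tT)(\widetilde{Z}_n(t)+\widetilde{Z}_n(t)^{\,*})\bigr],
\end{equation*}
which expands into six summands, each of the form $\trac(L\cdot N)$ with $L\in\qt\cap\qt^*$ and $N\in\qt_n\cup\qt_n^*$.

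To finish, I would invoke the classical orthogonality of the trace form of $\slt_n(\C)$: in a matrix realization of $\qt$ as block upper triangular matrices with block-diagonal Levi $\qt\cap\qt^*$, the nilradical $\qt_n$ consists of strictly block upper triangular matrices, so for $L\in\qt\cap\qt^*$ and $N\in\qt_n$ the product $LN$ is again strictly block upper triangular and has vanishing trace; applying the same observation to the opposite parabolic $\qt^*$ gives $\trac(LN)=0$ also for $N\in\qt_n^*$. All six summands thus vanish, which shows that $J_1(t)$ and $J_2(t)$ are orthogonal at every point of $\gammaup_H$. The only delicate step is the first one, namely detecting that the Hermiticity of $H$ and $T$ forces them into the Levi factor, so that conjugation by $e^{tH/2}$ respects the triangular decomposition; once this is in place, the trace-form orthogonality eliminates every term automatically.
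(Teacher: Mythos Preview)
Your proof is correct and follows essentially the same approach as the paper: both reduce the orthogonality to the vanishing of $\trac(L\cdot N)$ for $L$ in the Levi factor $\qt\cap\qt^*$ and $N$ in $\qt_n$ or $\qt_n^*$, after observing that $H$ (and $T$) lie in the Levi so that $e^{tH}$-conjugation preserves the triangular decomposition. Your device of conjugating by $p^{-1/2}$ to symmetrize the inner product is a slightly cleaner packaging of the same computation the paper carries out term by term.
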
 
\begin{proof}
We show, separately, that $\thetaup_{Z_0}$ and $\thetaup_T$ are both orthogonal to $\thetaup_{Z_n}$ 
at all points of $\gammaup_H.$ We have 
\begin{align*}
 (\thetaup_T(t)|\thetaup_{Z_n}(t)) &= \trac\big(2T{e}^{-tH}(e^{tH}Z_n^*+Z_n e^{tH})\big)=
 2\trac(TZ_n+TZ_n^*)=0,\\
 (\thetaup_{Z_0}(t)|\thetaup_{Z_n}(t)) &=\trac\big((e^{-tH}Z^*_0+Z_0{e}^{-tH})(e^{tH} Z_n+Z_n^*e^{tH})\big)\\
 &=\trac\big( Z_0^* (e^{tH}Z_n e^{-tH}) + Z_0^*Z_n^*+Z_0Z_n+(e^{tH}Z_0e^{-tH})Z_n^*\big)=0
\end{align*}
because  $\qt\cap\qt^*$ and $\qt_n$ are orthogonal for the trace form of 
the canonical representation of
$\slt_n(\C).$ Indeed, 
The expression in the last line is twice the sum of the real parts of the  product of $Z_0$ and
$Z_n$ and of $e^{-tH}Z_0^*e^{tH}\in\qt\cap\qt^*$ and $Z_n.$
\end{proof}
\begin{prop}\label{proptredodici}
 If $\vt_n$ is horocyclic in $\kt$, then 
 \begin{equation}\label{trediciotto}
 \Vf'\times\ft_0\ni (v,H)\longrightarrow v^*\exp(H)\, v\in
 \Mf(\Kf)
 =\Pf_0(n)\cap\Kf
\end{equation}
is a diffeomorphism.
\end{prop}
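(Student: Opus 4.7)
\emph{Proof proposal.} The plan is to establish (i) surjectivity of $\lambdaup$, (ii) injectivity of $d\lambdaup$ at every point, and (iii) a proper-covering argument to deduce that $\lambdaup$ is a global diffeomorphism. The horocyclic hypothesis enters essentially via Lemma~\ref{lemmatreundici}.

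For surjectivity: since $\Kf$ is self-adjoint and reductive, $\Pf_0(n)\cap\Kf=\exp(\kt\cap\po(n))$, so each such $p$ admits a square root $\zetaup\in\Kf$. Decomposing $\zetaup=u\cdot\exp(X)\cdot v$ by Proposition~\ref{proptreotto}, with $u\in\Kf_0$, $X\in\ft_0$, and $v\in\Vf'$, we obtain $p=\zetaup^*\zetaup=v^*\exp(2X)v=\lambdaup(v,2X)$.

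For the injectivity of $d\lambdaup$ at $(v,H)$: the isometry $p\mapsto(v^*)^{-1}p\,v^{-1}$ of $\Ml_n$ reduces the computation to the case $v=e$, where the differential in direction $(W_0+W_n,X)\in\vt'\times\ft_0$ equals the value $J(1)$ of the Jacobi field
\begin{equation*}
J=\thetaup_{W_0}+\thetaup_{W_n}+J_X\in\Jf(H)
\end{equation*}
along $\gammaup_H$. Writing $J_X=\thetaup_Y+t\,\thetaup_T$ via \eqref{j3.25n} with $Y\in\kt_0$ and $T\in\Ct_0(H)\cap\kt$, and letting $\qt\subset\kt$ be a parabolic with nilradical $\qt_n=\vt_n$ and $\qt\cap\qt^*$ as reductive Levi factor (existing by the horocyclic hypothesis), a verification shows that $\vt\cap\po(n)$, $Y$ and $T$ lie in $\qt\cap\qt^*$ and that $H\in\qt\cap\po(n)$: the latter because $\ft_0\perp\vt+\vt^*$ forces (via the perfect pairing between $\vt_n$ and $\vt_n^*$ under the trace form) the $\vt_n$ and $\vt_n^*$ components of $H$ in the decomposition $\kt=\vt_n^*\oplus(\qt\cap\qt^*)\oplus\vt_n$ to vanish. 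Lemma~\ref{lemmatreundici} then gives the pointwise orthogonality of $\thetaup_{W_n}$ and $\thetaup_{W_0}+J_X$ along $\gammaup_H$, so $J(1)=0$ decouples into $\thetaup_{W_n}(1)=0$ and $\thetaup_{W_0}(1)+J_X(1)=0$. The second, purely reductive, yields $W_0=0$ and $X=0$ by the argument of Proposition~\ref{prop:3.4.6}. The first, expanded via \eqref{jeit} in a basis simultaneously diagonalizing $H$, yields block equations coupling $W_{i,j}$ with $W_{j,i}^*$; the block-triangular structure of $\vt_n$ coming from horocyclicity kills one of the two in each pair, while the diagonal blocks vanish via $\vt_n\cap\su(n)=\{0\}$, forcing $W_n=0$.

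For the conclusion: both $\Vf'\times\ft_0$ and $\exp(\kt\cap\po(n))$ are diffeomorphic to Euclidean spaces, and $\lambdaup$ is proper because $\dist(\lambdaup(v,H),N)$ grows with $\|H\|$ by Lemma~\ref{lem:3.13.3}. A proper surjective local diffeomorphism onto a simply-connected manifold is a covering, hence a diffeomorphism. The main obstacle is the technical verification in step (ii) that $\qt$ can be chosen so that the hypotheses of Lemma~\ref{lemmatreundici} apply simultaneously to $(H,Y,T,W_0,W_n)$; this is precisely where the horocyclic hypothesis is indispensable, and its failure permits the pathology of Example~\ref{esampduesette}.
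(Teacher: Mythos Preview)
Your argument follows the same route as the paper: reduce the differential computation to Jacobi fields at $(e,H)$, invoke Lemma~\ref{lemmatreundici} to make $\thetaup_{W_n}$ orthogonal to $\thetaup_{W_0}+J_X$ along $\gammaup_H$, handle the reductive piece via Lemma~\ref{lem:3.13.3}, and finish with a simply-connected covering argument. You are in fact more explicit than the paper in checking that $H,\,W_0,\,Y,\,T$ all lie in $\qt\cap\qt^*$ so that Lemma~\ref{lemmatreundici} actually applies, and in separately establishing surjectivity.

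One point deserves correction. Your block-triangular argument for $\thetaup_{W_n}(1)=0\Rightarrow W_n=0$ tacitly assumes that the eigenspace decomposition of $H$ is compatible with the parabolic block structure of $\qt_n$; but $H$ lies in the Levi $\qt\cap\qt^*$ without being the grading element, so the individual $H$-blocks of $W_n$ need not themselves lie in $\qt_n$, and the ``one of each pair vanishes'' reasoning does not go through as stated. The fix is simpler than~\eqref{jeit}: from $W_n^*\exp(H)+\exp(H)W_n=0$ one reads off $W_n^*=-\Ad(\exp H)(W_n)$. Since $H\in\qt$ normalizes the ideal $\qt_n$, the right-hand side lies in $\qt_n$, while $W_n^*\in\bar\qt_n$; hence $W_n^*\in\qt_n\cap\bar\qt_n=\{0\}$. (Your appeal to Lemma~\ref{lem:3.13.3} for properness is also loose---that lemma controls Jacobi fields, not distances to $N$---but the paper is equally informal at the corresponding global step.)
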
 
\begin{proof}
In fact, we can find a parabolic $\qt$ in $\slt_n(\C)$ such that $\qt\cap\qt^*$ is its reductive Levi factor and
$\vt_n=\qt_n\cap\kt.$ Then we can reduce to proving the proposition in the
case where $\Kf=\SL_n(\C)$ and $\ft_0=\mt_0.$ We want to show 
that \eqref{treotto} is a local diffeomorphism. To this aim, with the notation
of \S\ref{kiljac}, it suffices 
 to prove that, for $Z\in\vt$ and $H,X\in\mt_0,$
we have $\thetaup_Z(1)\neq{J}_X(1)$ when $Z+X\neq{0}.$ We split 
$Z$ into the sum $Z=Z_0+Z_n,$ with $Z_0\in\vt\cap\po(n)$ and
$Z_n\in\vt_n.$ Then the fact that 
$\thetaup_{Z_0}(1)+J_X(1)\neq{0}$ if $Z_0+X\neq{0}$ follows
from Lemma~\ref{lem:3.13.3} because of
Lemma~\ref{lemmatreundici}. Hence \eqref{treotto} is a connected covering
of a simply connected manifold and thus a global diffeomorphism.
\end{proof}
Proposition~\ref{proptredodici} can be slightly generalized. It was
shown in 
\cite[p.251]{MN05} that there is a unique maximal complex Lie subalgebra
$\wt$ of $\kt$ with $\vt\subseteq\wt\subseteq\vt+\overline{\vt}.$
The $CR$-algebra 
$(\kt_0,\vt)$ and the corresponding $\Kf_0$-homogeneous
$CR$ manifold $M_0$ are called \emph{weakly nondegenerate} when
$\wt=\vt.$ 
If this is not the case, 
$M_0$ turns out to be 
the total space
of a complex
$CR$-bundle  with no%
ntrivial fibers
over a weakly nondegenerate $\Kf_0$-homogeneous $CR$ 
manifold~$M'_0,$ having $CR$ algebra $(\kt_0,\wt).$
\begin{prop}\label{proptretredici}
 Let $\wt$ be the largest complex Lie algebra with $\vt\subseteq\wt\subseteq
 \vt+\overline{\vt}.$ If $\wt_n=\nt(\wt)$ is horocyclic in $\kt,$ then
 \eqref{trediciotto} is a diffeomorphism.
\end{prop}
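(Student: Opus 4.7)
My plan is to adapt the proof of Proposition~\ref{proptredodici} to the weaker horocyclic hypothesis on $\wt_n$. First I would use the hypothesis to select a parabolic $\qt$ of $\slt_n(\C)$ with $\qt\cap\qt^*$ a reductive Levi factor and $\wt_n=\qt_n\cap\kt$. As in Proposition~\ref{proptredodici}, this reduces the problem to showing that the map $\lambdaup$ from \eqref{treotto} is a local diffeomorphism on $\vt'\times\mt_0$, which amounts to verifying $\thetaup_Z(1)\neq J_X(1)$ for all $(Z,X)\in\vt'\times\mt_0\setminus\{(0,0)\}$.

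The central new step is that the nilpotent piece $Z_n\in\vt_n$ appearing in the splitting used in Proposition~\ref{proptredodici} need not lie in $\qt_n$ under the weaker hypothesis. I would therefore decompose $Z\in\vt\subseteq\wt$ via the Levi-Chevalley decomposition of $\wt$, writing $Z=Z_r+W_n$ with $Z_r$ in a reductive Levi complement $\wt_r$ of $\wt_n$ in $\wt$ and $W_n\in\wt_n$. By inner conjugation by $\exp(\ad\wt_n)$ (permissible by uniqueness of Levi factors) one may arrange $\wt_r\subseteq\qt\cap\qt^*$, while $\wt_n\subseteq\qt_n$ holds automatically. Although $Z_r$ and $W_n$ need not belong to $\vt$ individually, linearity yields $\thetaup_Z=\thetaup_{Z_r}+\thetaup_{W_n}$, and Lemma~\ref{lemmatreundici} then provides pointwise orthogonality on $\gammaup_H$ between $\thetaup_{W_n}$ and $\thetaup_{Z_r}+t\thetaup_T$, where $T$ is extracted from the decomposition $X=[H,Y]+2T$ of $X\in\mt_0$.

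From the orthogonality I obtain the norm identity
\[
\|\thetaup_Z(1)-J_X(1)\|^2=\|\thetaup_{Z_r}(1)-J_X(1)\|^2+\|\thetaup_{W_n}(1)\|^2,
\]
so vanishing of the left-hand side forces both summands to vanish. A Jacobi field analysis---using the nonpositive curvature of $\Mf_n$ and the nilpotency of $W_n\in\qt_n$---then forces $W_n=0$, while Lemma~\ref{lem:3.13.3}, combined with the trace-orthogonality $(Z_r,X)=0$ (since $Z_r\in\wt$ and $X\in\mt_0=(\wt+\wt^*)^\perp\cap\po(n)$), forces $Z_r=X=0$ and hence $Z=0$. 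Global bijectivity follows as in Proposition~\ref{proptredodici}: surjectivity is supplied by Proposition~\ref{proptreotto}, and together with the simple-connectedness of the source $\vt'\times\ft_0$ and of $\Mf(\Kf)$ the local-diffeomorphism property upgrades to a global diffeomorphism via a standard covering-space argument.

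The main obstacle I expect is the compatible choice of $\qt$ and of the Levi factor $\wt_r$: one must verify that a Cartan-involution-invariant parabolic $\qt$ of $\slt_n(\C)$ can be chosen with $\wt\subseteq\qt$ and with a Levi complement $\wt_r$ of $\wt_n$ in $\wt$ sitting inside $\qt\cap\qt^*$. This amounts to lifting the horocyclic subalgebra $\wt_n\subset\kt$ to a parabolic of $\slt_n(\C)$ in a fashion compatible with both the Cartan involution and the reductive Levi structure of $\wt$. A secondary subtlety is that $Z_r\in\wt_r$ is not Hermitian in general, so the hypothesis $(J(0),\dot J(0))=0$ of Lemma~\ref{lem:3.13.3} must be checked carefully---most cleanly by further splitting $Z_r$ along the Cartan decomposition of the $*$-invariant $\qt\cap\qt^*$ and analyzing the Hermitian and skew-Hermitian parts separately.
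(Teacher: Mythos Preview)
Your outline follows essentially the same route as the paper: split $Z\in\vt'$ along the Levi--Chevalley decomposition of $\wt$, use the orthogonality furnished by Lemma~\ref{lemmatreundici} to separate the nilpotent piece $W\in\wt_n=\qt_n$ from the reductive piece, and then apply Lemma~\ref{lem:3.13.3} to the reductive piece together with $J_X$. Two points deserve correction.

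First, your ``main obstacle'' is self-inflicted. Because $(\kt_0,\wt)$ is itself $\nt$-reductive (this is proved in the paper just before the example following Definition~\ref{defhnr}), the subalgebra $\wt\cap\overline{\wt}$ is already a reductive Levi factor of $\wt$; being $*$-invariant and normalizing $\wt_n=\qt_n$, it lies in $\qt\cap\qt^*$ without any conjugation. The paper simply takes $U\in\vt'\cap\wt\cap\overline{\wt}$ and $W\in\qt_n$, avoiding your conjugation step entirely---and this matters, because conjugating by $\exp(\ad\wt_n)$ would not preserve $\vt'$ or $\mt_0$.

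Second, your appeal to Lemma~\ref{lemmatreundici} for the Pythagorean identity
\[
\|\thetaup_Z(1)-J_X(1)\|^2=\|\thetaup_{Z_r}(1)-J_X(1)\|^2+\|\thetaup_{W_n}(1)\|^2
\]
is incomplete as stated: you only claim orthogonality of $\thetaup_{W_n}$ with $\thetaup_{Z_r}+t\thetaup_T$, but $J_X=\thetaup_Y+t\thetaup_T$ also contains $\thetaup_Y$, and Lemma~\ref{lemmatreundici} requires $Y\in\qt\cap\qt^*$. This can be arranged---since $X,H\in\mt_0\subseteq\qt\cap\qt^*$ and $\ad_H$ preserves the $*$-invariant subalgebra $\qt\cap\qt^*$, one may choose $Y\in(\qt\cap\qt^*)\cap\su(n)$ and $T\in(\qt\cap\qt^*)\cap\Ct_0(H)$---but you should say so. The paper sidesteps this by checking orthogonality of the initial data $\thetaup_W(0),\dot\thetaup_W(0)$ against $J(0),\dot J(0)$ and invoking Lemma~\ref{lem:3.13.3} directly on the full field $\thetaup_Z+J_X$.

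Your ``secondary subtlety'' about $Z_r$ not being Hermitian is exactly what the paper addresses by the computation $(U+U^*\mid[H,U-U^*])=2\,\trac(H[U,U^*])=0$: here $[U,U^*]\in\wt\cap\overline{\wt}\subseteq\vt+\overline{\vt}$, while $H\in\mt_0=(\vt+\overline{\vt})^\perp$. No further splitting of $U$ along $\su(n)\oplus\po(n)$ is needed.
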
 \begin{proof}
As above, we reduce the proof to the case where $\Kf=\SL_n(\C).$ 
The proof follows the same pattern of the proof 
of Proposition~\ref{proptredodici}. 
We denote by $\qt$ a parabolic Lie subalgebra of
$\slt_n(\C)$ with $\qt_n=\wt_n$ 
and use the notation of \S\ref{kiljac}.
We need to prove that, for
$Z\in\vt'=(\vt\cap\po(n))\oplus\vt_n$ and $X,H\in\mt_0,$ we have
$\thetaup_Z(1)+{J}_X(1)\neq{0}$ if $Z+X\neq{0}$. 
To this aim it is convenient to split $Z$ into a sum $Z=U+W,$ 
with $U\in\vt'\cap\wt\cap\overline{\wt}$ and $W\in\qt_n.$ 
Let us consider first
$J=\thetaup_{U}+J_X$. We note that $\dot{J}(0)=X+\tfrac{1}{2}[X,U-U^*]$
is orthogonal to $J(0)=U+U^*.$ Indeed $(X|U+U^*)=0$ because
$\wt+\overline{\wt}=\vt+\overline{\vt}$ and,
since $[U,U^*]\in\wt\cap\po(n),$
\begin{equation*}
(U+U^*|[H,U-U^*])=\trac([H,U-U^*](U+U^*))=2\trac(H\cdot [U,U^*])=0.
\end{equation*}
By Lemma~\ref{lem:3.13.3}, this implies that $J(1)\neq{0}$ if 
$Z+X\neq{0}.$ Finally, we note that $\thetaup_W(0)$ and
$\dot{\thetaup}_W(0)$ are orthogonal to both $J(0)$ and $\dot{J}(0)$
to conclude, using again Lemma~\ref{lem:3.13.3}, that 
$J_Z(1)+J_X(1)=J(1)+J_W(1)\neq{0}$
when $X+Z=(X+U)+W\neq{0}.$\par 
This shows that \eqref{trediciotto}, being a connected smooth covering
of a simply connected manifold, is a global diffeomorphism.
\end{proof}
By using the argument in the proof of Proposition~\ref{prop:3.4.6},
we conclude:
 \par 
 \begin{thm}\label{thm3.14}
 Let $\wt$ be the largest complex Lie algebra with $\vt\subseteq\wt\subseteq
 \vt+\overline{\vt}.$ If $\wt_n=\nt(\wt)$ is horocyclic in $\kt,$ then
 \eqref{fibrherm} is a global diffeomorphism and therefore we obtain the
 $\Kf_0$-equivariant
  Mostow fibration of $M_-$ over $M_0$
\begin{equation}
\xymatrix{\Kf_0\times_{\Vf_0}\ft_0 \ar[rr] 
\ar[dr] && M_-\ar[dl]\\
& M_0}
\end{equation}
with Hermitian fiber.\qed
 \end{thm}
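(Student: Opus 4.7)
The plan is to mimic the argument of Proposition~\ref{prop:3.4.6}, substituting the Cartan decomposition of a reductive $\Vf$ with the horocyclic diffeomorphism of Proposition~\ref{proptretredici}. Under the \HNR{} hypothesis, that proposition yields a diffeomorphism
\begin{equation*}
\lambdaup_{\kt}:\Vf'\times\ft_0\ni (v,H)\longmapsto v^*\exp(H)\,v\in\Mf(\Kf),
\end{equation*}
so every element of $\Pfo(n)\cap\Kf$ admits a unique factorization of this form. From this I will extract a global decomposition $\Kf=\Kf_0\cdot\exp(\ft_0)\cdot\Vf'$ and obtain the theorem by passing to the $\Vf$-quotient.

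For surjectivity of \eqref{fibrherm}: given $\zetaup\in\Kf$, apply $\lambdaup_{\kt}^{-1}$ to $\zetaup^*\zetaup\in\Mf(\Kf)$ to produce unique $v\in\Vf'$ and $H\in\ft_0$ with $\zetaup^*\zetaup=v^*\exp(2H)v$. Setting $u:=\zetaup\cdot v^{-1}\cdot\exp(-H)$, a short computation as in the proof of Proposition~\ref{propo3.3.4} shows $u^*u=\Id_n$, hence $u\in\Kf_0$ and $\zetaup=u\exp(H)v$. The class $[\zetaup]\in\Kf/\Vf$ is therefore the image of $[u,H]\in\Kf_0\times_{\Vf_0}\ft_0$.

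For injectivity: suppose $u_1\exp(X_1)\Vf=u_2\exp(X_2)\Vf$, i.e. $u_1\exp(X_1)=u_2\exp(X_2)v$ for some $v\in\Vf$. Decompose $v=w\cdot v'$ via \eqref{dcmpv} with $w\in\Vf_0$ and $v'\in\Vf'$. Since $\ft_0=\mt_0\cap\kt$ is $\Ad(\Vf_0)$-invariant --- because $\Vf_0\subset\SU(n)$ acts unitarily and preserves the defining condition of $\mt_0=(\vt+\vt^*)^\perp\cap\po(n)$ --- the positive Hermitian identity
\begin{equation*}
\exp(2X_1)=v^*\exp(2X_2)v=(v')^*\exp\bigl(2\,\Ad(w^{-1})X_2\bigr)v',
\end{equation*}
together with the uniqueness in Proposition~\ref{proptretredici} (applied with $v=\Id_n$ on one side and $v=v'$ on the other), forces $v'=\Id_n$ and $X_1=\Ad(w^{-1})X_2$. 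Substituting back yields $u_2=u_1w^{-1}$, so $(u_1,X_1)$ and $(u_2,X_2)$ are $\Vf_0$-equivalent and define the same class in $\Kf_0\times_{\Vf_0}\ft_0$.

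Finally, to upgrade this continuous bijection to a diffeomorphism, I verify that its differential is injective at every point. By $\Kf_0$-equivariance this reduces to points of the form $[e,X]$ with $X\in\ft_0$, where, through the Jacobi field description of \S\ref{kiljac}, the assertion amounts to the non-vanishing $\thetaup_Z(1)+J_Y(1)\neq 0$ whenever $(Z,Y)\in\vt'\times\ft_0$ with $Z+Y\neq 0$ --- precisely the core estimate already established within the proof of Proposition~\ref{proptretredici}. The main obstacle I anticipate is the injectivity step, where the interaction between the Levi--Chevalley decomposition of $\Vf$ and the uniqueness in Proposition~\ref{proptretredici} must be arranged carefully; the $\Ad(\Vf_0)$-invariance of $\ft_0$ is the decisive ingredient that allows the factor $\exp(2\,\Ad(w^{-1})X_2)$ to remain within the uniqueness regime of $\lambdaup_{\kt}$. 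Since the fiber $\ft_0$ over $M_0=\Kf_0/\Vf_0$ carries an inner product inherited from $\po(n)$, the resulting $\Kf_0$-equivariant Mostow fibration has Hermitian fiber as claimed.
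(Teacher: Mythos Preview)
Your approach is essentially the paper's: the paper's proof consists of the single sentence ``By using the argument in the proof of Proposition~\ref{prop:3.4.6}, we conclude,'' and you have written out precisely that argument, replacing the Cartan decomposition of a reductive $\Vf$ with the diffeomorphism $\lambdaup_{\kt}$ supplied by Proposition~\ref{proptretredici}. Your surjectivity and injectivity steps correctly reproduce the direct product decomposition $\Kf=\Kf_0\cdot\exp(\ft_0)\cdot\Vf'$, and the $\Ad(\Vf_0)$-invariance of $\ft_0$ is indeed the right observation to make the injectivity work.

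One remark: the final differential check is both redundant and slightly imprecise. Redundant, because once $\lambdaup_{\kt}$ is a diffeomorphism the inverse decomposition $\zetaup\mapsto(u,H,v)$ is automatically smooth (exactly as in the proof of Proposition~\ref{prop:3.4.6}), so $\Kf_0\times\ft_0\times\Vf'\to\Kf$ is already a diffeomorphism and the quotient map inherits this. Imprecise, because the Jacobi field estimate $\thetaup_Z(1)+J_Y(1)\neq 0$ is the injectivity of $d\lambdaup_{\kt}$ on $\vt'\times\ft_0$, not directly the injectivity of the differential of $[u,X]\mapsto u\exp(X)\Vf$; the two are related through the squaring map $\zetaup\mapsto\zetaup^*\zetaup$, but that link is not the identity you wrote. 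Since the step is unnecessary, this does not affect the validity of the proof.
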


We keep the notation of \S\ref{subsreductive} and denote by $\wt$
the largest Lie subalgebra of~$\kt$~with 
\begin{equation}
\vt\subseteq\wt\subseteq\vt+\overline{\vt}.
\end{equation}
\begin{defn}\label{defhnr}
 We say that 
 $(\kt_0,\vt)$ 
 is {\HNR} 
 if $\wt_n=\nt(\wt)$
is horocyclic.
\end{defn}
For further reference, we reformulate the result obtained so far in the
following form. 
\begin{thm}\label{teotrequindici} 
If 
$(\kt_0,\vt)$ 
is {\HNR}, then we have the direct product decomposition
\begin{equation}\label{eqtreventuno} \vspace{-19pt}
\Kf=\Kf_0\cdot \exp(\ft_0)\cdot \Vf'.
\end{equation}
\qed
\end{thm}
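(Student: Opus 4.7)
The plan is to deduce the theorem directly from Proposition~\ref{proptretredici} combined with the polar argument that was already used in the proof of Proposition~\ref{prop:3.4.6}. Since $(\kt_0,\vt)$ is HNR, Proposition~\ref{proptretredici} applies and gives a diffeomorphism
\begin{equation*}
 \Vf'\times\ft_0\ni(v,H)\longmapsto v^*\exp(H)v\in\Mf(\Kf)=\Pf_0(n)\cap\Kf.
\end{equation*}
So each element of $\Mf(\Kf)$ admits a \emph{unique} representation in this form, and this is the sole ingredient beyond elementary manipulations that I would use.

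First, to show surjectivity of $\Kf_0\times\ft_0\times\Vf'\to\Kf$, I would start with an arbitrary $\zetaup\in\Kf$. Then $\zetaup^*\zetaup\in\Pf_0(n)\cap\Kf=\Mf(\Kf)$, so by Proposition~\ref{proptretredici} there exist unique $v\in\Vf'$ and $H\in\ft_0$ with
\begin{equation*}
 \zetaup^*\zetaup=v^*\exp(H)v.
\end{equation*}
Setting $u=\zetaup\cdot v^{-1}\cdot\exp(-H/2)$, a direct calculation (identical to the one in Proposition~\ref{propo3.3.4}) gives $u^*u=\Id_n$, so $u\in\SU(n)\cap\Kf=\Kf_0$. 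Thus $\zetaup=u\cdot\exp(H/2)\cdot v$ with $u\in\Kf_0$, $H/2\in\ft_0$, $v\in\Vf'$, which is the required factorization.

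Next, to promote this to a \emph{direct product} decomposition, I would check uniqueness: suppose $u_1\exp(X_1)v_1=u_2\exp(X_2)v_2$ with $u_i\in\Kf_0$, $X_i\in\ft_0$, $v_i\in\Vf'$. Passing to $\zetaup^*\zetaup$ eliminates $u_i$ and yields
\begin{equation*}
 v_1^*\exp(2X_1)v_1=v_2^*\exp(2X_2)v_2,
\end{equation*}
and the uniqueness clause in Proposition~\ref{proptretredici} forces $v_1=v_2$ and $X_1=X_2$, whence $u_1=u_2$ as well. Smoothness of the decomposition map and its inverse follows from the diffeomorphism property of \eqref{trediciotto} together with smoothness of the polar factor $u=\zetaup v^{-1}\exp(-H/2)$ as a function of $\zetaup$.

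I do not expect a real obstacle here, since all the heavy lifting has been done in Proposition~\ref{proptretredici}: the Jacobi-field computations of \S\ref{sec3} and Lemma~\ref{lemmatreundici} are precisely what is needed for the map $(v,H)\mapsto v^*\exp(H)v$ to be a diffeomorphism under the HNR hypothesis. The only delicate point is making sure that the element $u$ produced by the polar trick actually lies in $\Kf_0$ and not merely in $\SU(n)$; but this is immediate because $\zetaup,v\in\Kf$ and $\exp(-H/2)\in\exp(\ft_0)\subset\Kf$, so $u\in\Kf\cap\SU(n)=\Kf_0$.
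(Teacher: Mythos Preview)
Your proposal is correct and follows essentially the same route as the paper: Theorem~\ref{teotrequindici} is stated as a reformulation of Theorem~\ref{thm3.14}, which in turn is obtained ``by using the argument in the proof of Proposition~\ref{prop:3.4.6}'' applied to the diffeomorphism of Proposition~\ref{proptretredici}. Your write-up makes the polar trick and the uniqueness argument explicit, which the paper leaves implicit, but the logic is identical.
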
 
\begin{exam}\textsl{Minimal orbit of \texorpdfstring{$\SU(2,2)$ in $\Fi_{1,2}(\C^4)$}{TEXT}.} \par
We fix in $\C^4$ the Hermitian form associated to the matrix 
\begin{equation*} 
\begin{pmatrix}
 \Id_2\\
 & -\Id_2
\end{pmatrix}.
\end{equation*}
We let the corresponding group $\SU(2,2)$ operate on the flag manifold $\Fi_{1,2}(\C^4)$,
consisting of the pairs $(\ell_1,\ell_2)$ of a line $\ell_1$ and a $2$-plane $\ell_2$ with
$0\in\ell_1\subset\ell_2\subset\C^4.$ The minimal orbit is 
\begin{equation*}
 M_0=\{(\ell_1,\ell_2)\mid \ell_1\subset\ell_2=\ell_2^\perp\},
\end{equation*}
where the orthogonal is taken with respect to the fixed Hermitian form. 
It is the total space of a $\CP^1$-bundle over a smooth real manifold 
and in particular is  Levi-flat of $CR$ dimension $1.$
With 
 $\Kf_0=\Sb(\Ub(2)\times\Ub(2)),$  $\Kf=\Sb(\GL_2(\C)\times\GL_2(\C))$,
 the stabilizer 
\begin{equation*}
 \Vf=\left.\left\{ 
\begin{pmatrix}
 a \\
 & a
\end{pmatrix}\right| a\in
\Sb\Tb^+_2(\C)\right\}
\end{equation*}
of the base point  
$\op=(\langle e_1+e_3\rangle,\langle e_1+e_3,e_2+e_4\rangle)$
(here $\Tb^+_2(\C)$ is the group of upper triangular $2\times{2}$
complex matrices with non vanishing determinant and $\Sb\Tb^+_2(\C)$ its normal subgroup
consisting of those having determinant $1$)
has Lie algebra 
\begin{equation*}
 \vt=\left.\left\{ 
\begin{pmatrix}
 \lambdaup& \alphaup\\
 0 & -\lambdaup\\
 && \lambdaup & \alphaup \\
 && 0 & -\lambdaup
\end{pmatrix}\right| \lambdaup,\alphaup\in\C\right\}.
\end{equation*}
Clearly $\vt_n$ is not horocyclic. 
We note that \begin{equation*} \wt=
\vt+\overline{\vt}=\left.\left\{ \begin{pmatrix}
X & 0\\
0 & X
\end{pmatrix}\right| X\in\slt_2(\C)\right\}=\vt'
\end{equation*}
is a complex Lie algebra. Thus, although $\Vf$ is not {\HNR},
nevertheless we have a Mostow fibration with Hermitian fibers by
Theorem~\ref{thm3.14}.
\end{exam}
\begin{rmk} Example~\ref{esampduesette} shows that \eqref{trediciotto}
is not, in general, a diffeomorphism when $(\kt_0,\vt)$ is not
{\HNR}.
\end{rmk}
\section{Mostow fibration in general and the \texorpdfstring{\HNR}{TEXT} condition}
\label{sec5}
\subsection{The set \texorpdfstring{$\Pp_0(\vt)$}{TEXT}}
To better understand the notion introduced in Definition~\ref{defhnr}
and to characterize the fiber of the Mostow fibration of $M_-$ on $M_0$
in general, 
 it is convenient to rehearse some notions that were introduced 
in \cite[\S{3}]{AMN2013}.
We simply assume, at the beginning, that $\kt$ is any reductive Lie algebra over~$\C$.
\par
For a Lie subalgebra $\at$ of $\kt$,  let us denote 
by 
$\nt(\at)$ the ideal consisting of the $\ad_{\kt}$-nilpotent
elements of its radical. 
Starting from any splittable
Lie subalgebra $\vt$ of $\kt$ we construct
a sequence $\{\vt_{(h)}\}$ of Lie subalgebras 
by setting recursively 
\begin{equation} 
\begin{cases}
 \vt_{(0)}=\vt,\\
 \vt_{(h+1)}=\mathbf{N}_{\kt}(\nt(\vt_{(h)}))=\{Z\in\kt\mid [Z,\nt(\vt_{(h)})]\subset\nt(\vt_{(h)})\}, & \forall h\geq{0}.
\end{cases}
\end{equation}
Each 
$\vt_{(h)}$, with $h\geq{1}$, 
is the normalizer in $\kt$ of the ideal of $\ad_{\kt}$-nilpotent elements
of the radical of $\vt_{(h-1)}.$
It was shown in \cite{AMN2013} that 
$\vt_{(h)}\subseteqq\vt_{(h+1)}$ and 
$\nt(\vt_{(h)})\subseteqq\nt(\vt_{(h+1)})$ for all $h\geq{0},$ 
and that the union $\et={\bigcup}_{h\geq{0}}\vt_{(h)}$ 
is a parabolic subalgebra   of $\kt,$ with $\vt\subset\et$ and $\nt(\vt)=\vt_n\subset\nt(\et).$
We call $\et$  
the \emph{parabolic regularization of $\vt.$}
Hence 
\begin{equation}
\Pp(\vt)=\{\qt\mid \qt\;\text{is parabolic in $\kt$ and
$\vt\subset\qt$, $\nt(\vt)\subset\nt(\qt)$}\}
\end{equation}
is nonempty. 
Let us prove a general simple lemma on parabolic Lie subalgebras.
\begin{lem}\label{lemma4.1}
If $\qt_{{1}},\qt_2$ are parabolic Lie subalgebras
of $\kt$, then the Lie subalgebra $\qt=\qt_{{1}}\cap\qt_2+\nt(\qt_{{1}})$
is parabolic in $\kt$.
\end{lem}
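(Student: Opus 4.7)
My plan is first to check that $\qt$ is in fact a Lie subalgebra: the bracket $[\qt_1\cap\qt_2,\nt(\qt_1)]$ lies in $[\qt_1,\nt(\qt_1)]\subseteq\nt(\qt_1)$ because $\nt(\qt_1)$ is an ideal of $\qt_1$, and the remaining two brackets $[\qt_1\cap\qt_2,\qt_1\cap\qt_2]$ and $[\nt(\qt_1),\nt(\qt_1)]$ are obviously inside their respective summands. With this in hand, the parabolicity question reduces to a root-combinatorial one, and the first real input is the standard fact (a consequence of Borel--Tits, or of the Borel fixed point theorem applied to the action of a Borel of $\Qf_1$ on $\Kf/\Qf_2$) that any two parabolic subalgebras of a reductive Lie algebra admit a common Cartan subalgebra $\hg\subseteq\qt_1\cap\qt_2$.

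Having fixed such an $\hg$ with root system $R$, each parabolic $\qt_i$ is $\hg\oplus\bigoplus_{\alpha\in S_i}\kt_\alpha$ for a closed subset $S_i\subseteq R$ with $S_i\cup(-S_i)=R$; write $N_i=S_i\setminus(-S_i)$ and $L_i=S_i\cap(-S_i)$, so that $\nt(\qt_i)=\bigoplus_{\alpha\in N_i}\kt_\alpha$. Then $\qt_1\cap\qt_2=\hg\oplus\bigoplus_{\alpha\in S_1\cap S_2}\kt_\alpha$ and our subalgebra corresponds to
\begin{equation*}
S=(S_1\cap S_2)\cup N_1.
\end{equation*}
It remains to verify the two conditions characterising a parabolic root subset: $S\cup(-S)=R$ and $S$ is additively closed.

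For $S\cup(-S)=R$, take $\alpha\in R$. If $\alpha\in N_1$ or $-\alpha\in N_1$ we are done; otherwise $\alpha\in L_1$, hence $\alpha,-\alpha\in S_1$, and because $S_2\cup(-S_2)=R$ at least one of $\alpha,-\alpha$ lies in $S_2$, hence in $S_1\cap S_2\subseteq S$. For closure, suppose $\alpha,\beta\in S$ with $\alpha+\beta\in R$: if both lie in $S_1\cap S_2$ the sum is in $S_1\cap S_2$ by closure of each $S_i$; if at least one of them lies in $N_1$, then $[\kt_\alpha,\kt_\beta]\subseteq[\nt(\qt_1),\qt_1]\subseteq\nt(\qt_1)$, so $\alpha+\beta\in N_1\subseteq S$.

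The only non-routine ingredient is the existence of the common Cartan $\hg\subset\qt_1\cap\qt_2$; everything else is bookkeeping with root subsets once this is granted. An alternative route, which avoids invoking the common-Cartan statement explicitly, would be to argue on the algebraic group level that $\Qf_1\cap\Qf_2$ contains a Borel of $\Kf$ (since $\Qf_1\cap\Qf_2$ is known to be connected and to meet the open Bruhat cell for some common Borel), whence $\qt_1\cap\qt_2+\nt(\qt_1)$ a fortiori contains a Borel and is therefore parabolic; I would likely present both viewpoints briefly.
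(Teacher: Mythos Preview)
Your proof is correct and follows essentially the same approach as the paper: both invoke the existence of a common Cartan subalgebra $\hg\subseteq\qt_1\cap\qt_2$ (the paper cites \cite[Ch.VIII, Prop.~10]{Bou75}) and then reduce to root combinatorics. The only cosmetic difference is that the paper parametrizes each $\qt_i$ by an element $A_i\in\hg_{\R}$ and exhibits $\qt$ as the parabolic attached to $A_1+\epsilon A_2$ for small $\epsilon>0$, whereas you verify directly that $S=(S_1\cap S_2)\cup N_1$ is closed and satisfies $S\cup(-S)=R$; these are two ways of packaging the same computation.
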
 \begin{proof} We know (see e.g. 
\cite[Ch.VIII,Prop.10]{Bou75}) that $\qt_{{1}}\cap\qt_2$ contains
a Cartan subalgebra $\hg$ of $\kt$. If  $\Rad$ is the  corresponding set of roots, 
then each 
$\qt_i$ ($i=1,2$)
decomposes into a  
direct sum 
\begin{equation*}
\qt_i=\hg\oplus{\sum}_{\begin{smallmatrix}\alphaup\in\Rad,\\
\alphaup(A_i)\geq{0}\end{smallmatrix}}\kt_{\alphaup},
\end{equation*} 
where $A_1,A_2\in\hg_{\R}$ and, for each $\alphaup\in\Rad,$
 $\kt_{\alphaup}
=\{Z\in\kt\mid [A,Z]=\alphaup(A)Z,\;\forall A\in\hg_{\R}\}$ is the root space of $\alphaup.$ 
\par 
Take $\epsilon>0$ so small that 
\, $\epsilon\cdot|\alphaup(A_2)|<\alphaup(A_1)$ if $\alphaup(A_1)>0$. Then
\begin{equation*}
\qt=\hg\oplus{\sum}_{\begin{smallmatrix}
\alphaup\in\Rad,\\
\alphaup(A_1+\epsilon{A}_2)>0
\end{smallmatrix}}\kt_{\alphaup},
\end{equation*} 
is parabolic. In fact, if
$\Li(\qt_i)$ are the $\hg$-invariant reductive summands 
of $\qt_i$ and
$\nt(\qt_i)$ the ideals of nilpotent elements of their radicals, we have
\begin{equation*} \vspace{-24pt}
\qt=(\Li(\qt_{{1}})\cap\Li(\qt_2))\oplus (\Li(\qt_{{1}})\cap\nt(\qt_2))\oplus
\nt(\qt_{{1}}).
\end{equation*}
\end{proof}
From now on we assume that $\kt$ is the complexification of its compact 
real form $\kt_0$. 
Conjugation in $\kt$ will be understood with respect to  $\kt_0.$
Using  parabolic regularization 
and Lemma~\ref{lemma4.1} 
we obtain 
\begin{prop} If $(\kt_0,\vt)$ is $\nt$-reductive, then 
$\Pp(\vt)$ contains a $\qt$ having a conjuga\-tion-invariant 
reductive Levi subalgebra. 
\end{prop}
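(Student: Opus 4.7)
The plan is to take for $\qt$ the parabolic produced by applying Lemma~\ref{lemma4.1} to the pair $(\et,\overline{\et})$, where $\et$ is the parabolic regularization of $\vt$. By the discussion preceding the statement $\et\in\Pp(\vt)$, and since conjugation with respect to $\kt_{0}$ is a real Lie algebra automorphism of $\kt$ that maps complex subalgebras to complex subalgebras and Borel subalgebras to Borel subalgebras, $\overline{\et}$ is parabolic as well. The $\nt$-reductivity of $\vt$ gives the two inclusions $\Li(\vt)=\vt\cap\overline{\vt}\subseteq\et\cap\overline{\et}$ and $\nt(\vt)\subseteq\nt(\et)$ that will eventually force the resulting parabolic to lie in $\Pp(\vt)$.

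The crucial preliminary step is to choose, inside the proof of Lemma~\ref{lemma4.1}, a Cartan subalgebra $\hg$ of $\kt$ contained in $\et\cap\overline{\et}$ and stable under conjugation, i.e.\ of the form $\hg=\td^{\C}$ for some maximal torus $\td$ of $\kt_{0}$. I would establish this by the standard dimension count coming from the Iwasawa decomposition $\Kf=\Kf_{0}\cdot\Pf$: the transitive action of $\Kf_{0}$ on the flag variety $\Kf/\Pf$ with stabilizer $\Pf\cap\Kf_{0}$ yields $\dim_{\R}(\et\cap\kt_{0})=\dim_{\C}\Li(\et)$ for any reductive Levi factor of $\et$. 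Since every element of the compact algebra $\kt_{0}$ is semisimple in $\kt$, $\et\cap\kt_{0}$ is contained in a reductive Levi factor of $\et$ and is, by the dimension count, a compact real form of one such factor; any maximal torus $\td$ of $\et\cap\kt_{0}$ then has the same rank as $\kt$, so $\hg=\td^{\C}$ is the desired Cartan subalgebra, contained in $\et\cap\overline{\et}$ and conjugation-invariant.

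With this $\hg$, conjugation fixes $\td$ and sends each root space $\kt_{\alphaup}$ to $\kt_{-\alphaup}$ (the roots being real on $i\td$). Writing $\et=\hg\oplus\sum_{\alphaup(A)\ge 0}\kt_{\alphaup}$ for a suitable $A\in i\td$, one deduces that $\overline{\et}=\hg\oplus\sum_{\alphaup(A)\le 0}\kt_{\alphaup}$ is the parabolic opposite to $\et$, so that $\et\cap\overline{\et}=\hg\oplus\sum_{\alphaup(A)=0}\kt_{\alphaup}$ is the $\hg$-stable reductive Levi factor of $\et$; it is automatically conjugation-invariant since $\{\alphaup\mid\alphaup(A)=0\}$ is stable under $\alphaup\mapsto-\alphaup$. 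Feeding $(\et,\overline{\et})$ and this $\hg$ into Lemma~\ref{lemma4.1} produces a parabolic $\qt=(\et\cap\overline{\et})+\nt(\et)$ whose reductive Levi factor equals $\et\cap\overline{\et}$, hence is conjugation-invariant; combining the preliminary inclusions gives $\vt\subseteq\qt$ and $\nt(\vt)\subseteq\nt(\qt)$, that is $\qt\in\Pp(\vt)$.

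The main obstacle is exhibiting the conjugation-invariant Cartan $\hg$. Without it, conjugation need not match $\overline{\et}$ with a parabolic opposite to $\et$, the intersection $\et\cap\overline{\et}$ can fail to be reductive, and Lemma~\ref{lemma4.1} will not produce a parabolic with the required invariance; the dimension count for $\et\cap\kt_{0}$ is therefore the decisive ingredient.
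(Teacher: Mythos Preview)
Your proposal is correct and follows the same route as the paper: both take $\qt=(\et\cap\overline{\et})+\nt(\et)$ with $\et$ the parabolic regularization of $\vt$, invoke Lemma~\ref{lemma4.1} for parabolicity, and use $\nt$-reductivity of $\vt$ to place $\vt$ inside $\qt$. The paper's proof is a single line; you have supplied the details it suppresses, notably the verification that $\et\cap\overline{\et}$ is actually a reductive Levi factor (not merely a reductive subalgebra contained in one, which is all Proposition~\ref{prop:31} gives directly). Your dimension count via the transitivity of $\Kf_0$ on $\Kf/\Pf$ and the resulting conjugation-invariant Cartan $\hg=\td^{\C}$ are the standard way to see this, and in fact they show that $\qt=\et$: once $\et\cap\overline{\et}$ is known to be a Levi factor of $\et$, the sum $(\et\cap\overline{\et})+\nt(\et)$ is all of $\et$. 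So the ``main obstacle'' you flag is exactly the content the paper is treating as routine background about parabolics in the complexification of a compact form.
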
 \begin{proof}
We can take $\qt=(\et\cap\overline{\et})+\nt(\et)$, for the parabolic
regularization $\et$ of~$\vt$.
\end{proof}
This shows that, for an $\nt$-reductive $(\kt_0,\vt),$ the set 
\begin{equation}
\Pp_0(\vt)=\{\qt\in\Pp(\vt)\mid \qt=(\qt\cap\overline{\qt})\oplus\nt(\qt)\}
\end{equation}
is nonempty. 
For $\qt\in\Pp_0(\vt)$ we will use $\Li(\qt)=\qt\cap\overline{\qt}$. 
The parabolic regularizazion produces a \textit{small} $\et$
and a corresponding smaller $(\et\cap\overline{\et})\oplus\nt(\et)$
in $\Pp_0(\vt)$. We are however  more interested in the \textit{maximal} elements of $\Pp(\vt)$.
To explain the meaning of maximality, 
 we prove
(cf.~\cite[Proposition 20]{AMN2013}) 
\begin{prop}
 If $(\kt_0,\vt)$ is $\nt$-reductive and 
 $\qt$ any maximal element of $\Pp_0(\vt)$, then 
\begin{equation}
 \qt=\Lie\big(\nt(\vt)+\Li(\qt)\big) \quad\text{and}\quad 
 \nt(\qt)={\sum}_h\ad^h(\Li(\qt))(\nt(\vt)).
\end{equation}
\end{prop}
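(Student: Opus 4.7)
The plan is to reduce both identities to the single claim
\begin{equation*}
\nt^{\ast} := {\sum}_{h \geq 0} \ad^h(\Li(\qt))(\nt(\vt)) = \nt(\qt),
\end{equation*}
and then to establish this equality by contradicting the maximality of $\qt$. The inclusion $\nt^{\ast} \subseteq \nt(\qt)$ is automatic, since $\nt(\vt) \subseteq \nt(\qt)$ (because $\qt\in\Pp(\vt)$) and $\nt(\qt)$ is an ideal of $\qt$, hence $\Li(\qt)$-stable. Granting the reverse inclusion, the first identity follows from
\begin{equation*}
\qt = \Li(\qt) \oplus \nt(\qt) = \Li(\qt) + \nt^{\ast} \subseteq \Lie\bigl(\nt(\vt) + \Li(\qt)\bigr) \subseteq \qt.
\end{equation*}

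Suppose, for contradiction, that $\nt^{\ast} \subsetneq \nt(\qt)$. Because $\Li(\qt)$ is reductive, it acts semisimply on the finite-dimensional module $\nt(\qt)$, so I may select an irreducible $\Li(\qt)$-summand $V \subseteq \nt(\qt)$ with $V \cap \nt^{\ast} = \{0\}$. Conjugation stabilizes $\Li(\qt) = \qt \cap \bar\qt$; hence $\bar V$ is a $\Li(\qt)$-irreducible summand of $\overline{\nt(\qt)} = \nt(\bar\qt)$ with $\bar V \not\subseteq \qt$. My aim is to use $\bar V$ to produce a $\tilde\qt \in \Pp_0(\vt)$ strictly containing $\qt$.

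Fix a Cartan $\hg \subseteq \Li(\qt)$ and consider the root-space decomposition of $\kt$. The parabolic $\qt$ corresponds to a parabolic subset of $\Rad$, and $V$ is a sum of root spaces indexed by some $\Li(\qt)$-stable subset $S \subseteq \Rad(\nt(\qt))$ lying in a single eigenspace of the centre of $\Li(\qt)$. Consider the conjugation-invariant reductive subalgebra $\tilde\Li := \Li(\qt) + V + \bar V$; since $V$ and $\bar V$ sit in opposite graded components of $\nt(\qt)\oplus\overline{\nt(\qt)}$, one checks $[V,\bar V]\subseteq\Li(\qt)$ and so $\tilde\Li$ is indeed a Lie subalgebra. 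A suitable perturbation of the grading element defining $\qt$, chosen to vanish on $S\cup -S$ while remaining strictly positive on $\Rad(\nt(\qt))\setminus S$, produces a new grading whose zero part is $\tilde\Li$ and whose positive part is a subspace $\nt(\tilde\qt)\subseteq\nt(\qt)$ complementary to $V$. The resulting parabolic $\tilde\qt$ satisfies $\qt\subsetneq\tilde\qt$, $\vt\subseteq\qt\subseteq\tilde\qt$, and, by construction, $\tilde\qt = \tilde\Li\oplus\nt(\tilde\qt) = (\tilde\qt\cap\overline{\tilde\qt})\oplus\nt(\tilde\qt)$. An application of Lemma~\ref{lemma4.1} can be used in place of the perturbation argument, combining $\qt$ with a parabolic having $-S$ on its positive side to produce $\tilde\qt$.

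The main technical obstacle is the verification of $\nt(\vt)\subseteq\nt(\tilde\qt)$: when the Levi grows from $\Li(\qt)$ to $\tilde\Li=\Li(\qt)+V+\bar V$, those root spaces of $\nt(\qt)$ indexed by $S$ are absorbed into $\tilde\Li$, and one must check that none of $\nt(\vt)$ is carried along. This is precisely where the $\nt$-reductiveness of $(\kt_0,\vt)$ enters decisively: it furnishes the splitting $\vt = \nt(\vt)\oplus\Li(\vt)$ with $\Li(\vt)\subseteq\Li(\qt)$, so $\nt(\vt)$ lies inside the $\Li(\qt)$-submodule $\nt^{\ast}$, which by the choice of $V$ is disjoint from $V$, hence from $S$. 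Consequently $\nt(\vt)\subseteq\nt(\tilde\qt)$, and $\tilde\qt\in\Pp_0(\vt)$ properly contains $\qt$, contradicting maximality. This forces $\nt^{\ast} = \nt(\qt)$ and completes the proof.
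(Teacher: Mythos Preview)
Your overall strategy coincides with the paper's: reduce to the equality $\nt^\ast=\nt(\qt)$ and, if it fails, enlarge $\qt$ inside $\Pp_0(\vt)$ to contradict maximality. The paper carries this out by introducing Kostant's weight system $\Zf$ for the action of the center of $\Li(\qt)$ on $\kt$, so that $\nt(\qt)=\bigoplus_{\nuup>0}\kt_\nuup$ with each $\kt_\nuup$ an irreducible $\Li(\qt)$-module, and then picks a \emph{simple} $\muup_i\in\Zf^+$ missing from the support of $\Lie(\nt(\vt)+\Li(\qt))$; simplicity is exactly what guarantees that $\qt'=\qt\oplus\kt_{-\muup_i}$ is again a parabolic subalgebra and that $\nt(\qt')=\bigoplus_{\nuup>0,\,\nuup\neq\muup_i}\kt_\nuup$ still contains $\nt(\vt)$.

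The gap in your argument is precisely this missing simplicity hypothesis. You select an arbitrary irreducible $\Li(\qt)$-summand $V=\kt_\nuup$ disjoint from $\nt^\ast$ and then assert that $\tilde\qt=\qt+\bar V$ (equivalently, the space produced by your ``perturbation of the grading element'') is parabolic. But if $\nuup$ is not simple in $\Zf$, say $\nuup=\muup_1+\muup_2$ with $\muup_1,\muup_2\in\Zf^+$, then $[\kt_{-\nuup},\kt_{\muup_1}]\subseteq\kt_{-\muup_2}$, which lies neither in $\qt$ nor in $\bar V$, so $\tilde\qt$ is not even a Lie subalgebra. The same obstruction kills the perturbation: an element $A'$ vanishing on the roots of $V$ while staying strictly positive on all other roots of $\nt(\qt)$ cannot exist, because any roots $\betaup,\gammaup$ restricting to $\muup_1,\muup_2$ would satisfy $\betaup(A')+\gammaup(A')=0$ with both summands required to be positive. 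Your appeal to Lemma~\ref{lemma4.1} does not repair this either, since that lemma produces subalgebras contained in $\qt_1$, not enlargements of it. The fix is to argue, as the paper does, that the support $E$ of $\Lie(\nt(\vt)+\Li(\qt))$ in $\Zf^+$ must omit some simple $\muup_i$ whenever $E\neq\Zf^+$ (because $E$ is closed under addition), and then use that specific $\kt_{\muup_i}$ for~$V$.
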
 \begin{proof} Let $\qt\in\Pp_0(\vt)$ and denote by $\zt$ the center of $\Li(\qt)$.
Being invariant under conjugation, it is the complexification
of the Lie subalgebra $\zt_0$ of a maximal torus $\td_0$ of $\kt_0$. 
Set $\zt_{\R}=\mathpzc{i}\!\zt_0$. Following the construction of Konstant in
\cite{Kos10}, we  consider the set $\Zf$ 
consisting 
of the nonzero elements
$\nuup$ of the dual $\zt_{\R}^*$ for which 
\begin{equation*}
\kt_{\nuup}=\{X\in\kt\mid [Z,X]=\nuup(Z)X,\;\forall Z\in\zt_{\R}\}\neq
\{0\}.
\end{equation*}
This set $\Zf$ shares many properties of the root system of a semisimple
Lie algebra.
With the scalar product defined on $\zt_{\R}$ by the restriction
of the trace form of a faithful linear representation of $\kt$
and the corrisponding
dual scalar product on $\zt_{\R}^*$,
we have \begin{align}
\tag{$i$} & \nuup\in\Zf\Longrightarrow -\nuup\in\Zf,
\;\;\text{and}\;\; \overline{\kt}_{\nuup}=\kt_{-\nuup},\\
\tag{$ii$} & \nuup_1,\nuup_2,\nuup_1+\nuup_2
\in\Zf\Longrightarrow [\kt_{\nuup_1},
\kt_{\nuup_2}]=\kt_{\nuup_1+\nuup_2},\\
\tag{$iii$} &\nuup_1,\nuup_2\in\Zf\;\text{and}\; (\nuup_1|\nuup_2)>0
\Longrightarrow \nuup_1-\nuup_2\in\Zf,\\
\tag{$iv$} & \forall \nuup\in\Zf,\;\kt_{\nuup}\;\text{is
an irreducible $\Li(\qt)$-module,}\\
\tag{$v$} & \nt(\qt)={\sum}_{\nuup>0}\kt_{\nuup},\;\text{for
some lexicographic order in $\Zf,$}\\
\tag{$vi$} & \exists \text{\; a basis $\{\muup_1,\hdots,
\muup_{\ell}\}\subset\Zf$ of positive simple roots of $\zt_{\R}^*$.}
\end{align} 
\par
The Lie subalgebra $\Lie(\nt(\vt)+\Li(\qt))$ is contained in $\qt$
and is a direct sum 
\begin{equation*}
 \Lie(\nt(\vt)+\Li(\qt))=\Li(\qt)\oplus{\sum}_{\nuup\in\mathpzc{E}}\kt_{\nuup},
\end{equation*}
for a subset $\mathpzc{E}$ of $\Zf^+=\{\nuup>0\}$.
Assume 
that there is a positive simple root $\muup_i$ 
which 
does not belong to $\mathpzc{E}$.
Since $\muup_i$ is simple,  
$\qt'=\qt\oplus\kt_{-\muup_i}$ is still a parabolic Lie subalgebra. 
Let us show that it is 
an element of $\Pp_0(\vt)$. 
We have \begin{equation*}
\qt'=\Li(\qt')\oplus\nt(\qt'),\;\;\text{with}\;\;
\Li(\qt')=\Li(\qt)\oplus\kt_{\muup_i}\oplus\kt_{-\muup_i}\;\;
\text{and}\;\; \nt(\qt')={\sum}_{\nuup\in(\Zf^+\setminus\{\muup_i\})}
\kt_{\nuup}.
\end{equation*}
Note that 
$\Li(\qt')=\qt'\cap\bar{\qt}'$. An element $X\in\nt(\vt)$
can be written in a unique way as a sum $X={\sum}_{\nuup\in\mathpzc{E}}X_{\nuup}$
with $X_{\nuup}\in\kt_{\nuup}$. Then
$X\in\nt(\qt')$, because $\mathpzc{E}\subset\Zf^+\setminus\{\muup_i\}.$
This shows that $\nt(\vt)\subset\nt(\qt')$, i.e that $\qt'\in\Pp_0(\vt)$.
Thus, if $\qt$ is maximal in $\Pp_0(\vt)$, then $\Lie(\nt(\vt)+\Li(\qt))$ contains all
$\kt_{\muup_i}$ for $i=1,\hdots,\ell$ and thus is equal to $\qt$, because
$(ii)$ and the fact that every positive root is a sum o simple positive roots yield that
$\Lie({\sum}_{i=1}^\ell\kt_{\muup_i})=\nt(\qt).$ Finally, it follows from the discussion above
that $\nt(\qt)$ is the $\ad(\Li(\qt))$-module generated by $\nt(\vt)$.
\end{proof}
Analogously, we obtain 
\begin{prop}
 If $\qt$ i
 s any maximal element of $\Pp(\vt)$, then 
\begin{equation}
 \qt=\Lie(\nt(\vt)+\Li(\qt)),
\end{equation}
for any reductive Levi factor $\Li(\qt)$ of $\qt$, and $\nt(\qt)$ is the
$\ad(\Li(\qt))$-module generated by $\nt(\vt)$.
\qed
\end{prop}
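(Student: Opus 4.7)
The plan is to imitate the preceding proposition's proof, working now with an arbitrary reductive Levi factor $\Li(\qt)$ of $\qt$ (no longer necessarily conjugation-invariant), and to derive a contradiction by producing a strictly larger parabolic in $\Pp(\vt)$ as soon as $\Lie(\nt(\vt)+\Li(\qt))$ falls short of $\qt$.

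I would first pick a Cartan subalgebra $\hg$ of $\kt$ contained in $\Li(\qt)$, decompose $\kt$ into weight spaces $\kt_{\nuup}$ for the adjoint action of the center $\zt$ of $\Li(\qt)$, and select a lexicographic order on $\hg_{\R}$ that singles out a positive system $\Zf^{+}$ with $\nt(\qt)=\bigoplus_{\nuup\in\Zf^{+}}\kt_{\nuup}$. The Kostant-style properties used in the previous proof remain valid here, since they depend only on the reductive nature of $\Li(\qt)$ and on $\qt$ being parabolic: $(iv)$ each $\kt_{\nuup}$ is an irreducible $\ad(\Li(\qt))$-module, $(ii)$ $[\kt_{\nuup_{1}},\kt_{\nuup_{2}}]=\kt_{\nuup_{1}+\nuup_{2}}$ whenever $\nuup_{1}+\nuup_{2}\in\Zf$, and $(vi)$ $\Zf^{+}$ admits a basis of simple positive roots $\muup_{1},\dots,\muup_{\ell}$ that generate $\Zf^{+}$ by nonnegative integer combinations.

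By $(iv)$, $\Lie(\nt(\vt)+\Li(\qt))\cap\nt(\qt)=\sum_{\nuup\in\mathpzc{E}}\kt_{\nuup}$ for some $\mathpzc{E}\subset\Zf^{+}$; Lie-closure combined with $(ii)$ makes $\mathpzc{E}$ closed under addition in $\Zf^{+}$. Assuming $\mathpzc{E}\neq\Zf^{+}$, property $(vi)$ produces a simple $\muup_{i}\notin\mathpzc{E}$. I would form the parabolic $\qt'$ generated by $\qt$ and $\kt_{-\muup_{i}}$: it strictly contains $\qt$ and has nilradical $\sum_{\nuup\in\Zf^{+}\setminus\{c\muup_{i}:c\in\mathbb{Z}_{>0}\}}\kt_{\nuup}$. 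The choice of $\muup_{i}$ guarantees that no positive multiple of it belongs to $\mathpzc{E}$, so $\nt(\vt)\subset\sum_{\nuup\in\mathpzc{E}}\kt_{\nuup}\subset\nt(\qt')$, giving $\qt'\in\Pp(\vt)$ and contradicting maximality of $\qt$. Therefore $\mathpzc{E}=\Zf^{+}$ and the first identity $\qt=\Lie(\nt(\vt)+\Li(\qt))$ follows. The second assertion is then automatic, interpreting the $\ad(\Li(\qt))$-module generated by $\nt(\vt)$ as the smallest $\ad(\Li(\qt))$-invariant Lie subalgebra of $\nt(\qt)$ containing $\nt(\vt)$: it coincides with $\Lie(\nt(\vt)+\Li(\qt))\cap\nt(\qt)=\nt(\qt)$.

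The main obstacle will be transferring Kostant's combinatorial data $(\Zf,\Zf^{+},\{\muup_{i}\})$ to the non-conjugation-invariant setting, since the previous proof realized $\zt$ as the complexification of $\zt_{0}\subset\kt_{0}$ and ordered $\Zf$ using $\zt_{\R}=\mathpzc{i}\zt_{0}$. Replacing this by an auxiliary real ordering on $\hg_{\R}$ and appealing to the standard structure theory of parabolic subalgebras of complex reductive Lie algebras should suffice; once $(ii)$, $(iv)$, $(vi)$ are secured in this more general form, the contradiction argument becomes a literal transcription of the preceding one.
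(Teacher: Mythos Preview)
Your proposal is correct and follows exactly the route the paper intends: the paper gives no separate argument for this proposition, writing only ``Analogously, we obtain,'' and your adaptation---replacing the conjugation-invariant $\zt_0\subset\kt_0$ by the center of an arbitrary reductive Levi factor and invoking the standard weight-space combinatorics for parabolics in a complex reductive algebra---is precisely the modification required to carry the previous proof over verbatim.

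One point worth recording concerns the second clause. Your reading of ``the $\ad(\Li(\qt))$-module generated by $\nt(\vt)$'' as the smallest $\ad(\Li(\qt))$-invariant \emph{Lie subalgebra} (rather than subspace) is not the literal one, but it is the correct one: under the plain submodule interpretation the assertion can fail. For example, with $\kt=\slt_3(\C)$, simple roots $\alpha,\beta$, $\vt=\nt(\vt)=\langle e_\alpha+e_\beta\rangle$, and $\qt$ the Borel with $\Li(\qt)=\hg$, one checks that $\qt$ is maximal in $\Pp(\vt)$, yet the $\hg$-submodule generated by $e_\alpha+e_\beta$ is only $\langle e_\alpha,e_\beta\rangle$, which misses $e_{\alpha+\beta}$. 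So your interpretation is the one under which the second clause holds, and it does follow immediately from the first identity, as you observe.
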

\subsection{A remark on the \texorpdfstring{\HNR}{TEXT} condition} 

Assume that $(\kt_0,\vt)$ is $\nt$-reductive and let
$\Qf$ be the parabolic subgroup of $\Kf$ corresponding to a $\qt$ in $\Pp_0(\vt)$.
Let $\Qf_n$ be the unipotent radical of $\Qf$ and set
$\Vf'=\Vf\cdot\Qf_n.$ Then $\Vf'\cap\overline{\Vf}'=\Vf\cap\overline{\Vf}$ and therefore
the minimal $\Kf_0$ orbits in $M_-=\Kf/\Vf$ and $M'_-=\Kf/\Vf'$ are diffeomorphic as
$\Kf_0$-homogeneous manifolds: 
the $CR$ algebras $(\kt_0,\vt)$ and $(\kt_0,\vt+\qt_n)$
 define two $CR$ structures on the same $M_0=\Kf_0/\Vf_0,$
the latter being \textit{stronger} than the first. 
These are the $CR$ structures inherited
from the embeddings $M_0\hookrightarrow{M}_-$ and $M_0\hookrightarrow{M}'_-.$ 
Note that $M'_{-}$ is the basis of a complex fiber bundle
 $M_-\to{M}'_-,$ with Stein fibers
bi-holomorphic to $\C^k$ for some nonnegative integer
 $k$ (cf. \cite[Thm.30]{AMN2013}). 
 The choice of a maximal $\qt$ in $\Pp_0(\vt)$ leads to
a \textit{minimal} $\vt+\qt_n$, while a minimal $\qt\in\Pp_0(\vt)$ to a \textit{maximal}
$\vt+\qt_n$, defining, when $(\kt_0,\vt)$ is not {\HNR}, 
a maximal $\Kf_0$-homogeneous $CR$ structure on $M_0$ which is {\HNR} and 
stronger than the original one.\par 
\begin{exam}\label{esuquattrocinque}
\textsl{Minimal orbit of \texorpdfstring{$\SU(2,3)$ in $\Fi_{1,3}(\C^5)$}{TEXT}.} \par
We denote by $\Fi_{1,3}(\C^5)$ the flag manifold consisting of the pairs $(\ell_1,\ell_3)$ of a line
$\ell_1$ and a $3$-plane $\ell_3$ of $\C^5$ with $0\in\ell_1\subset\ell_3.$
We fix the Hermitian symmetric form of signature $(2,3)$ in $\C^n,$ 
corresponding to the matrix 
\begin{equation*}
 \begin{pmatrix}
 \Id_2\\
 & -\Id_3
\end{pmatrix},
\end{equation*}
and consider the minimal orbit
for the action of the real Lie group $\SU(2,3)$ in $\Fi_{1,3}(\C^5):$
 \begin{equation*}
M_0=\{(\ell_1,\ell_3)\in\Fi_{1,3}(\C^5)\mid \ell_1\subset\ell_3^\perp\subset
\ell_3\}. 
\end{equation*}
Fix on $M_0$ the base point $\op=(\langle e_1+e_3\rangle, \langle
e_1+e_3, \, e_2+e_5,\, e_5\rangle).$ 
Its stabilizer in~$\Kf$~is 
\begin{equation*}
\Vf=\left.\left\{ \begin{pmatrix}
\lambdaup_1&z_1\\
0 & \lambdaup_2\\
& & \lambdaup_1 & 0 & z_1\\
& & 0 &\lambdaup_3& z_2\\
& & 0 & 0 & \lambdaup_2
\end{pmatrix}\right| \lambdaup_i,z_i\in\C,\;\; \lambdaup_1^2
\cdot\lambdaup_2^2\cdot\lambdaup_3=1\right\},
\end{equation*}
with Lie algebra
\begin{equation*}
\vt=\left.\left\{ \begin{pmatrix}
\lambdaup_1&z_1\\
0 & \lambdaup_2\\
& & \lambdaup_1 & 0 & z_1\\
& & 0 &\lambdaup_3& z_2\\
& & 0 & 0 & \lambdaup_2
\end{pmatrix}\right| \lambdaup_i,z_i\in\C,\;\; 2\lambdaup_1
+2\lambdaup_2+\lambdaup_3=0\right\}.
\end{equation*}
The normalizer of $\vt_n$ in $\kt$ is the parabolic 
 \begin{equation*}
\qt=\left.\left\{ \begin{pmatrix}
\lambdaup_1&z_1\\
0 & \lambdaup_2\\
& & \lambdaup_3 & \alphaup_1 & z_2\\
& & \alphaup_2 &\lambdaup_4& z_3\\
& & 0 & 0 & \lambdaup_5
\end{pmatrix}\right| \lambdaup_i,z_i,\alphaup_i\in\C,\;\; {\sum}_{i=1}^5
\lambdaup_i=0\right\},
\end{equation*}
which is also a maximal element in $\Pp_0(\vt)$ 
and hence $(\st(\ut(2)\times\ut(3)),\vt)$  is not \HNR. \par 
The Lie algebra 
\begin{equation*} \tilde{\vt}=
 \vt+\qt_n=\left.\left\{ 
\begin{pmatrix}
 \lambdaup_1 & z_1 \\
 0 & \lambdaup_2 \\
 && \lambdaup_1 & 0 & z_2\\
 && 0 & \lambdaup_3 & z_3\\
 && 0 & 0 & \lambdaup_2
\end{pmatrix}\right| \lambdaup_i,z_i\in\C,\;\; 2(\lambdaup_1+\lambdaup_2)+\lambdaup_3=0\right\}
\end{equation*}
is the Lie algebra of the stabilizer $\tilde{\Vf}$ in $\Kf=\Sb(\GL_2(\C)\times\GL_3(\C))$ of
$p_0'\in\Fi_{1,2,4}(\C^5)$ for $p_0'=(\langle e_1+e_3\rangle, \langle e_1+e_3,e_4 \rangle,
\langle e_1,e_3,e_4,e_2+e_5\rangle)$. This corresponds to the intersection of
the $\SU(2,3)$-orbit 
\begin{equation*}
M'_+=\{(\ell_1,\ell_2,\ell_4)\in\Fi_{1,2,4}(\C^5)\mid \ell_1=
\ell_2\cap\ell_2^\perp,\; \dim(\ell_4\cap\ell_4^\perp)=1\}
\end{equation*}
with its Matsuki dual $\Kf$-orbit
$M_-'$. With $L_2=\langle e_1,e_2\rangle$ and $L_3=\langle e_3,e_4,e_5\rangle,$
 we have
\begin{equation*}
 M'_-=\left\{\begin{aligned} (\ell_1,\ell_2,\ell_4)\quad\;\;
 \\
 \in\Fi_{1,2,4}(\C^5) \end{aligned}
 \left|\begin{gathered}
  \dim(\ell_1\cap{L}_2)=0,\; \dim(\ell_1\cap{L}_3)=0,\;
 \dim(\ell_2\cap{L}_2)=0,\\ \dim(\ell_2\cap{L}_3)=1,\; \dim(\ell_4\cap{L}_2)=1,\;
 \dim\ell_4\cap{L}_3=2\end{gathered}\right\}\right..
\end{equation*}
This shows that, in this case, the strengthening of the $CR$ structure on $M_0$ corresponds
to considering the compact intersection with its Matsuki dual of an intermediate orbit
in some complex flag manifold of the same complex semisimple Lie group (in this case
of $\SL_5(\C)$).
\end{exam}

\begin{prop}
 Assume that $(\kt_0,\vt)$ is $\nt$-reductive. Then, if 
 $\wt$ is a 
 complex Lie subalgebra of $\kt$ with $\vt\subseteq\wt\subseteq\vt\oplus\overline{\vt},$
 then also $(\kt_0,\wt)$ is $\nt$-reductive.
\end{prop}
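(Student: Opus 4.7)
The plan is to exhibit the direct-sum decomposition $\wt=\nr(\wt)\oplus\Li(\wt)$ of Definition~\ref{df33} by pushing forward the $\nt$-reductive structure of $\vt$ through the sandwich $\vt\subseteq\wt\subseteq\vt+\overline{\vt}$.

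First, since $\Li(\vt)=\overline{\Li(\vt)}$ is conjugation-invariant, the decomposition $\vt=\nr(\vt)\oplus\Li(\vt)$ produces the $\ad\Li(\vt)$-stable direct sum
\begin{equation*}
 \vt+\overline{\vt}\;=\;\nr(\vt)\oplus\Li(\vt)\oplus\overline{\nr(\vt)};
\end{equation*}
the directness follows from Proposition~\ref{prop:31}, since any $n+\bar m\in\Li(\vt)$ with $n,m\in\nr(\vt)$ gives $n-m\in\nr(\vt)\cap\kt_0$, which is both nilpotent and semisimple and hence zero, so $2n\in\Li(\vt)\cap\nr(\vt)=\{0\}$. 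Because $\Li(\vt)\subseteq\vt\subseteq\wt$ and $\Li(\vt)=\overline{\Li(\vt)}\subseteq\overline{\wt}$, both $\wt$ and $\overline{\wt}$ are $\ad\Li(\vt)$-invariant, and intersecting with each summand of the display yields
\begin{equation*}
 \wt=\nr(\vt)\oplus\Li(\vt)\oplus\bar{\mathfrak{a}},\qquad \Li(\wt)=\mathfrak{a}\oplus\Li(\vt)\oplus\bar{\mathfrak{a}},
\end{equation*}
where $\bar{\mathfrak{a}}:=\wt\cap\overline{\nr(\vt)}$ and $\mathfrak{a}:=\overline{\wt}\cap\nr(\vt)=\overline{\bar{\mathfrak{a}}}$.

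Next, $\mathfrak{a}$ is an $\ad\Li(\vt)$-submodule of $\nr(\vt)$, and the reductive $\Li(\vt)$-action on $\nr(\vt)$ is completely reducible (its center acts semisimply on $\kt$). I would therefore pick an $\ad\Li(\vt)$-invariant complement $\mathfrak{c}$ of $\mathfrak{a}$ in $\nr(\vt)$, obtaining the vector-space direct sum $\wt=\mathfrak{c}\oplus\Li(\wt)$ with $\mathfrak{c}\subseteq\nr(\vt)$ consisting of $\ad_{\kt}$-nilpotent elements and $\mathfrak{c}\cap\Li(\wt)=\{0\}$.

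The main obstacle is to show that $\mathfrak{c}$, after a suitable refinement, is an ideal of $\wt$. Once this is established, $\mathfrak{c}\subseteq\nr(\wt)$; combined with $\nr(\wt)\cap\Li(\wt)=\{0\}$ (by the argument of Proposition~\ref{prop:31} applied to $\wt$), the dimension match $\dim\mathfrak{c}=\dim\wt-\dim\Li(\wt)$ forces $\mathfrak{c}=\nr(\wt)$ and $\wt=\nr(\wt)\oplus\Li(\wt)$. By construction $[\Li(\vt),\mathfrak{c}]\subseteq\mathfrak{c}$, while $[\nr(\vt),\mathfrak{c}]\subseteq\nr(\vt)=\mathfrak{a}\oplus\mathfrak{c}$; the delicate brackets are $[\bar{\mathfrak{a}},\mathfrak{c}]$ and $[\mathfrak{c},\mathfrak{c}]$. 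I would refine $\mathfrak{c}$ to be an $\ad\Li(\wt)$-invariant complement of $\Li(\wt)$ in $\wt$, available because $\wt$ is a semisimple $\Li(\wt)$-module (reductivity of $\Li(\wt)$); this makes $[\Li(\wt),\mathfrak{c}]\subseteq\mathfrak{c}$ automatic. The residual obstruction is the $\Li(\wt)$-equivariant symmetric bilinear map $\mathfrak{c}\otimes\mathfrak{c}\to\Li(\wt)$ obtained by projecting $[\mathfrak{c},\mathfrak{c}]\subseteq\nr(\vt)$ onto the subspace $\Li(\wt)\cap\nr(\vt)=\mathfrak{a}$; killing this image by one final equivariant modification of $\mathfrak{c}$ inside $\nr(\vt)+\bar{\mathfrak{a}}$ is the principal technical step of the proof.
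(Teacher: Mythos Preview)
Your setup is correct and matches the paper: the decomposition
$\vt+\overline{\vt}=\nr(\vt)\oplus\Li(\vt)\oplus\overline{\nr(\vt)}$ and the identification
$\Li(\wt)=\mathfrak{a}\oplus\Li(\vt)\oplus\bar{\mathfrak{a}}$ with
$\mathfrak{a}=\overline{\wt}\cap\nr(\vt)$ are exactly what is needed. The gap is the part you yourself flag as ``the principal technical step'': you never actually produce a complement $\mathfrak{c}$ that is an ideal of $\wt$. Your proposed refinement to an $\ad\Li(\wt)$-invariant complement is problematic, because $\nr(\vt)$ is not $\ad\bar{\mathfrak{a}}$-stable in general (one only has $[\bar{\mathfrak{a}},\nr(\vt)]\subseteq\wt$), so an $\Li(\wt)$-invariant complement of $\Li(\wt)$ in $\wt$ need not sit inside $\nr(\vt)$ and you lose control of nilpotency. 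The final ``equivariant modification'' you allude to is not described, and it is not clear it can be done without further input.

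The paper's argument supplies that missing input in one stroke: use the $\ad$-invariant symmetric bilinear form $\kll$ on $\kt$ that is real and negative definite on $\kt_0$. One checks $\vt_n=\vt\cap\vt^\perp$, that $\kll$ gives a nondegenerate duality between $\vt_n$ and $\overline{\vt}_n$, and then simply \emph{defines}
\[
\mathfrak{c}\;=\;\vt_n\cap\bar{\mathfrak{a}}^{\perp}\;=\;\wt\cap\wt^{\perp}.
\]
Invariance of $\kll$ makes $\wt\cap\wt^{\perp}$ an ideal of $\wt$ automatically (for $X\in\wt$, $C\in\wt\cap\wt^\perp$, $Y\in\wt$ one has $\kll([X,C],Y)=-\kll(C,[X,Y])=0$). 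It lies in $\vt_n$, hence consists of $\ad_\kt$-nilpotent elements, and the positivity $\kll(Z,\bar Z)<0$ for $Z\neq 0$ gives $\mathfrak{a}\cap\bar{\mathfrak{a}}^{\perp}=\{0\}$, so $\vt_n=\mathfrak{a}\oplus\mathfrak{c}$ and $\wt=\mathfrak{c}\oplus\Li(\wt)$. The conclusion $\mathfrak{c}=\nr(\wt)$ then follows by your own dimension argument. In short: replace the complete-reducibility argument by orthogonality with respect to the invariant form; the ideal property then comes for free.
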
 
\begin{proof} The reductive Lie group $\kt$ has an invariant 
nondegenerate bilinear form $\betaup,$ which is real and negative definite
on $\kt_0.$ We observe that, if the pair $(\kt_0,\vt)$ is $\nt$-reductive,
then $\vt_n=\vt\cap\vt^\perp,$ where $\vt^\perp=\{Z\in\kt\mid\betaup(Z,Z')=0,\;\forall Z'\in\vt\},$
and that $\vt+\overline{\vt}$ has the direct sum decomposition
\begin{equation*}
\vt+\overline{\vt}=\vt\oplus\overline{\vt}_n.
\end{equation*}   
If $\wt$ is a complex Lie subalgebra with
 $\vt\subseteq\wt\subseteq\vt+\overline{\vt}$,
then $\wt=\vt\oplus (\wt\cap\overline{\vt}_n).$ Since $\betaup$ defines
a duality pairing between
$\vt_n$ and
$\overline{\vt}_n,$ we obtain the decomposition \begin{equation*}
\wt=(\wt\cap\overline\wt)\oplus\wt_n,\;\;\text{with}\;\;
 \wt_n=\vt_n\cap(\wt\cap\overline{\vt}_n)^\perp,\;\;
 \wt\cap\overline{\wt}=(\vt\cap\overline{\vt})
 \oplus(\vt_n\cap\overline{\wt})
 \oplus(\overline{\vt}_n\cap\wt),
\end{equation*}
showing that also $(\kt_0,\wt)$ is $\nt$-reductive.
\end{proof}
\begin{rmk}
 If $(\kt_0,\vt)$ is $\nt$-reductive, then $\vt$ is the Lie algebra of an algebraic Lie subgroup $\Vf$
 of $\Kf.$ This is the content of \cite[Thm.26]{AMN2013}. In particular, all Lie subalgebras
 $\wt$ with $\vt\subseteq\wt\subseteq\vt+\overline{\vt}$ are $\Lie(\Wf)$ for an algebraic
 Lie subgroup $\Wf$ of~$\Kf.$
\end{rmk}
\begin{exam}\textsl{Minimal orbit of \texorpdfstring{$\SU(2,3)$ in $\Fi_{1 ,2}(\C^5).$}{TEXT}}\par
We partly use the notation of Example~\ref{esuquattrocinque}.
Denote by ${M}_0$ the minimal orbit of $\SU(2,3)$ in the flag $\Fi_{1,2}(\C^5)$ of nested
lines and $2$-planes. 
\begin{equation*}
 M_0=\{ (\ell_1,\ell_2\in\Fi_{1,2}(\C^5)\mid \ell_2\subset\ell_2^\perp\}
\end{equation*}
is a $CR$ manifold of type $(3,4).$
It is the total space of a $\CP^1$-bundle on the $CR$ manifold 
$M'_0$ of isotropic $2$-planes in 
the Grassmannian $\Gr_2(\C^4),$  which has type $(2,4).$ 
The stabilizer ${\Vf}$ of the base point
$p_0=(\langle e_1+e_3\rangle, \langle e_1+e_3, e_2+e_4\rangle),$
has Lie algebra
\begin{equation*}
{\vt}=\left.\left\{ 
\begin{pmatrix}
 \lambdaup_1 & z_1\\
 0 & \lambdaup_2 \\
 && \lambdaup_1 & z_1& z_2\\
 && 0 & \lambdaup_2 & z_3 \\
 && 0 & 0 & \lambdaup_3
\end{pmatrix}\right| \begin{aligned}&\lambdaup_i,z_i\in\C\\
& 2\lambdaup_1+2\lambdaup_2+\lambdaup_3=0\end{aligned}\right\}.
\end{equation*}
The largest $\qt\in\Pp_0(\vt)$ has 
\begin{equation*}
 \qt_n=\left.\left\{ 
\begin{pmatrix}
0 & z_1\\
 0 & 0 \\
 && 0 & z_2& z_3\\
 && 0 & 0 & z_4 \\
 && 0 & 0 & 0
\end{pmatrix}\right| \begin{aligned}& z_i\in\C\\
\end{aligned}\right\}
\end{equation*}
and hence $(\st(\su(2)\times\su(3)),\vt)$ is not \HNR. 
We note however that 
\begin{equation*}
 \wt=\left.\left\{ 
\begin{pmatrix}
 \lambdaup_1&\zetaup_1\\
 \zetaup_2&\lambdaup_2\\
 &&  \lambdaup_1&\zetaup_1& z_1\\
 && \zetaup_2&\lambdaup_2 & z_2\\
 && 0 & 0 &\lambdaup_3 \end{pmatrix}
 \right| 
\begin{aligned}
 & \lambdaup_i,\zetaup_i,z_i\in\C\\
 & 2\lambdaup_1+2\lambdaup_2+\lambdaup_3=0
\end{aligned}
\right\} \subset\vt+\overline{\vt}
\end{equation*}
has a horocyclic $\wt_n.$ 
The orthogonal ${\mt}_0$ of ${\vt}+\overline{\vt}$ in $\st(\pt(2)\times\pt(3))$
is 
\begin{equation*}
{\mt}_0=\left.\left\{ 
\begin{pmatrix}
 X \\
 & -X \\
 && 0
\end{pmatrix}\right| X\in\pt(2)\right\}
\end{equation*}
and, according to Theorem~\ref{thm3.14} %
it can be used to describe the typical fiber of
the Mostow fibration $M_-\to{M}_0$ in this case.
\end{exam}
\subsection{Decomposition of unipotent Lie groups}\label{subnilpo}
A \textit{unipotent} 
Lie group is a connected and simply connected Lie group 
$\Nf$ having a nilpotent
Lie algebra $\nt.$ 
Then the exponential map 
$\exp:\nt\to\Nf$ is an algebraic diffeomorphism and 
each Lie subalgebra $\eg$ of $\nt$ is the Lie algebra of an 
analytic closed subgroup $\Ef$ of~$\Nf$.

\begin{prop}\label{prop:3.5.6}
 Let $\Nf$ be a unipotent Lie group
 and $\Sb$ a group of automorphisms of its Lie algebra
 $\nt,$ which acts on $\nt$ in a completely reducible way.
 If  $\Ef$ a Lie 
 subgroup of $\Nf$ with an $\Sb$-invariant 
 Lie algebra $\eg,$ then we can find an $\Sb$-invariant 
  linear complement
 $\lt$  of $\eg$ in $\nt$ such that 
\begin{equation}\label{eq:3.62}
 \lt\times\Ef\ni (X,x)\longrightarrow \exp(X)\cdot{x}\in\Nf
\end{equation}
is a diffeomorphism onto. 
\end{prop}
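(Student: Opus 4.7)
The plan is to decouple the statement into two parts. First, the existence of an $\Sb$-invariant complement $\lt$ with $\nt=\eg\oplus\lt$ is immediate: since $\eg$ is an $\Sb$-invariant subspace of $\nt$ and the $\Sb$-action is completely reducible, any $\Sb$-invariant complement of $\eg$ will do. Second, I need to verify that for this $\lt$ (and indeed for any linear complement, $\Sb$-invariance playing no further role here) the multiplication map $(X,x)\mapsto\exp(X)\cdot x$ is a diffeomorphism $\lt\times\Ef\to\Nf$. This is a purely nilpotent-group statement about unipotent Lie groups, and the $\Sb$-equivariance only enters to match the formulation of the proposition.

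I would prove the diffeomorphism statement by induction on $\dim\nt$. The base case $\nt$ abelian is trivial, since then $\exp\colon\nt\to\Nf$ is a group isomorphism transporting the linear splitting $\nt=\eg\oplus\lt$. For the inductive step, let $\zt=\zt(\nt)$ be the center, which is characteristic and hence $\Sb$-invariant, and pick a nonzero irreducible $\Sb$-submodule $\zt^*\subseteq\zt$. By irreducibility, either $\zt^*\subseteq\eg$ or $\zt^*\cap\eg=0$. In both cases I pass to the unipotent quotient $\overline{\Nf}=\Nf/\exp(\zt^*)$, whose Lie algebra $\overline{\nt}=\nt/\zt^*$ has strictly smaller dimension. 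If $\zt^*\subseteq\eg$, then $\exp(\zt^*)\subseteq\Ef$, so the subgroup descends to $\overline{\Ef}=\Ef/\exp(\zt^*)$ with Lie algebra $\overline{\eg}=\eg/\zt^*$; the inductive hypothesis furnishes an $\Sb$-invariant complement $\overline{\lt}$ of $\overline{\eg}$ with $\overline{\lt}\times\overline{\Ef}\xrightarrow{\sim}\overline{\Nf}$, which I lift back to an $\Sb$-invariant complement of $\eg$ in $\nt$ by complete reducibility applied to $\nt\to\overline{\nt}$, and reconstruct the diffeomorphism using the inclusion $\exp(\zt^*)\subseteq\Ef$. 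If instead $\zt^*\cap\eg=0$, then $\exp(\zt^*)\cap\Ef=\{e\}$ and the projection identifies $\Ef$ with a subgroup of $\overline{\Nf}$ having Lie algebra isomorphic to $\eg$; applying the inductive hypothesis in $\overline{\nt}$ yields an $\Sb$-invariant $\overline{\lt}$ with $\overline{\lt}\times\Ef\xrightarrow{\sim}\overline{\Nf}$, and I lift $\overline{\lt}$ to an $\Sb$-invariant subspace of $\nt$, and form $\lt$ by adjoining $\zt^*$. The centrality identity $\exp(X+Y)=\exp(X)\exp(Y)$ for $Y\in\zt^*$ then assembles the lifted datum into the required diffeomorphism on $\Nf$.

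The main obstacle I anticipate is the bookkeeping around the lifts and the case analysis: one has to verify that the inductively produced $\Sb$-invariant complement in $\overline{\nt}$ lifts back to an $\Sb$-invariant complement in $\nt$ (which uses complete reducibility of the $\Sb$-action on the short exact sequence $0\to\zt^*\to\nt\to\overline{\nt}\to0$) and that the principal $\exp(\zt^*)$-bundle $\Nf\to\overline{\Nf}$ is compatible with the appropriate subgroup structure in each of the two cases. Once these compatibilities are in place, surjectivity and injectivity of the multiplication map are inherited from the inductive diffeomorphism on $\overline{\Nf}$ by a direct fiberwise lifting argument, and smoothness in both directions is automatic because the Baker--Campbell--Hausdorff product is a polynomial identity in a unipotent group.
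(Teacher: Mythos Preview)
Your proof is correct and follows essentially the same inductive strategy as the paper: reduce to a smaller unipotent group by quotienting out a central subgroup, splitting into two cases according to how that central piece meets~$\eg$. Your device of picking an irreducible $\Sb$-submodule $\zt^*\subseteq\zt$ (forcing the clean dichotomy $\zt^*\subseteq\eg$ or $\zt^*\cap\eg=0$) is a mild reorganization of the paper's argument, which instead tests whether $\cg\cap\eg=\{0\}$ and quotients by $\exp(\cg\cap\eg)$ or $\exp(\cg)$ accordingly; the lifting and reconstruction steps are the same in both.

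One caveat: your opening parenthetical---that the multiplication map is a diffeomorphism for \emph{any} linear complement~$\lt$, with $\Sb$-invariance playing no further role---is stronger than what your induction actually establishes. Your argument, like the paper's, constructs a \emph{particular} $\Sb$-invariant~$\lt$ by lifting the inductively produced complement from the quotient, and the proposition only asserts existence. So either drop that remark or supply a separate justification; it is not needed for the proposition as stated.
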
 
\begin{proof}
 We argue by recurrence on the 
sum of the dimension $n$ of $\nt$ and  the  
 codimension $k$ of $\eg$ in $\nt$. The statement is indeed trivial when $n=1$, or $k=0.$ 
 If $k=1$, then $\eg$ is an ideal in $\nt$ and has a $1$-dimensional $\Sb$-invariant complement
 $\lt$ in $\nt.$ Since $\lt$ is a Lie subalgebra, using e.g. 
 \cite[Lemma 3.18.5]{Varad} we conclude that \eqref{eq:3.62} is a diffeomorphism in this case. 
 \par 
Assume now that $k>1$ and that the statement has already been proved 
for subalgebras $\eg$ of codimension lesser than $k$ or 
nilpotent Lie algebras $\nt$ of
dimension lesser than $n$. Since $\nt$ is nilpotent, its center $\cg$ has positive dimension
and is \mbox{$\Sb$-invariant}. If $\cg\cap\eg\neq\{0\}$, then $\Af=\exp(\cg\cap\eg)$ is a nontrivial
normal subgroup of~$\Nf.$ 
Since $\dim(\Nf/\Af)<n$ and $\Sb$ acts in a completely reducible way on $\nt/(\cg\cap\eg),$ 
by the recursive assumption 
we can find an $\Sb$-invariant  
linear complement $\lt$ of $\et$ in $\nt$ such that, for its projection $\lt'$
in $\nt/(\cg\cap\eg),$ the map
\begin{equation*}
f': \lt' \times (\Ef/\Af) \ni (X',x') \longrightarrow \exp(X')\cdot x' \in \Nf/\Af
\end{equation*}
is a diffeomorphism.
This implies that \eqref{eq:3.62} is also a diffeomorphism.
In fact, if $\zetaup\in\Nf$, by the surjectivity of $f'$ there is a pair $(X,y)\in\lt\times\Ef$
such that $\exp(X)\cdot{y}=\zetaup\cdot a,$ for some $a\in\Af$. This shows that 
$\zetaup=\exp(X)\cdot (y\cdot{a}^{-1})$ and therefore \eqref{eq:3.62} is onto. 
If $\zetaup=\exp(X_1)\cdot (x_1)=\exp(X_2)\cdot(x_2)\cdot{a}$, with $X_1,X_2\in\lt$,
$x_1,x_2\in\Ef$ and $a\in\Af$, then $X_1=X_2=X$ because the projection $\lt\to\lt'$
is a linear isomorphism. Moreover, the correspondence $\zetaup\to{X}$ is $\cC^\infty$-smooth,
because ${f'}^{-1}$ is smooth. Then $\zetaup\to{x}=\exp(-X)\cdot\zetaup\in\Ef$ is also smooth,
and $\zetaup\to(X,\exp(-X)\zetaup)$ yields a smooth inverse of \eqref{eq:3.62}. \par 
If $\cg\cap\eg=\{0\}$, then by the recurrence assumption, 
we can take an $\Sb$-invariant 
 linear complement $\lt$ of $\eg$ in $\nt$
containing $\cg$ and such that
\begin{equation*}
f': (\lt/\cg)\times((\Ef\cdot\Cf)/\Cf)\ni (X',x')\longrightarrow \exp(X')\cdot{x'}\in\Nf/\Cf.
\end{equation*}
is a diffeomorphism. We claim that,
with this choice, 
\eqref{eq:3.62} is a diffeomorphism. Indeed, $(\Ef\cdot\Cf)/\Cf\simeq\Ef$ and therefore
 for $\zetaup\in\Nf$ there is a unique $x\in\Ef$, with $x=\phiup(\zetaup)$ for a smooth
 function $\phiup:\Nf\to\Ef$, such that, 
for some $Z\in\cg$ and $Y\in\lt$, 
\begin{equation*}
 \zetaup\cdot\exp(Z)=\exp(Y)\cdot{x}\Rightarrow \zetaup=\exp(Y-Z)\cdot{x}.
\end{equation*}
The exponential is a diffeomorphism of $\nt$ onto $\Nf$.
If we denote by $\log:\Nf\to\nt$ its inverse, we obtain $X=Y-Z=\log(\zetaup\cdot{x}^{-1})\in\lt$
and $$\Nf\ni\zetaup\to
 \left(\log(\zetaup\cdot[\phiup(\zetaup)]^{-1}),\phiup(\zetaup)\right)\in\lt\times\Ef$$
is a smooth inverse of \eqref{eq:3.62}.
This completes the proof.
\end{proof}
With the notation of the previous section, 
we will apply Proposition~\ref{prop:3.5.6} to the case where $\Nf=\Qf_n$ and
$\nt=\qt_n,$ for a minimal $\qt\in\Pp_0(\wt),$ while $\eg=\vt_n$ and 
$\Sb=\Ad(\Vf_0).$ Since $\Vf_0$ is compact, its
adjoint action on $\qt_n$ is completely reducible.

\subsection{Structure of the typical fiber} 
The quotient $\Kf/\Qf$ of $\Kf$ by a parabolic subgroup $\Qf$ is compact
and thus a homogeneous space of its compact form $\Kf_0.$ Thus
 \begin{equation}\label{forsette}
 \Kf=\Kf_0\cdot \Qf.
\end{equation}
Set $\kt=\Lie(\Kf),$ $\qt=\Lie(\Qf),$ and choose $\Kf_0$ to contain
a maximal compact subgroup of $\Qf$. Then $\Qf$ has a Levi-Chevalley
decomposition $\Qf=\Lf(\Qf)\cdot\Qf_n,$ whose reductive factor
$\Lf(\Qf)$ has Lie algebra $\Li(\qt)=\qt\cap\overline{\qt}.$ 
The conjugation is taken with respect to the real compact form
$\kt_0$ and
$\Qf_n$ is the unipotent factor of $\Qf,$ with Lie algebra $\qt_n.$
We consider the Cartan decomposition $\kt=\kt_0\oplus\po,$ with 
 $\po=i\cdot\kt_0.$ Using the Cartan decomposition of
$\Lf(\Qf),$ we obtain the direct product decomposition
\begin{equation}\label{eq:3.59}
 \Qf=\Lf(\Qf)\cdot\exp(\nt(\qt))=\Lf_0(\Qf)\cdot
 \exp(\pt _0\cap\qt)\cdot \exp(\nt(\qt)).
\end{equation}
\par 
We keep the notation of the previous sections, with $\wt$ the maximal
complex Lie subalgebra with $\vt\subseteq\wt\subseteq\vt+\overline{\vt}$
and take $\qt$  in $\Pp_0(\wt).$  
Then $\et=\vt+\qt_n$ is a Lie subalgebra of $\kt$ and the pair
$(\kt_0,\et)$ has the {\HNR} property. Set 
\begin{equation}\label{fornovo}
\ft_0=\po\cap(\vt+\qt_n)^\perp.
\end{equation}
By \eqref{eqtreventuno}, we obtain the direct product decomposition 
\begin{equation}\label{quattrododici}
\Kf=\Kf_0\cdot \exp(\ft_0)\cdot \exp(\vt_n+\qt_n)\cdot\exp(\vt\cap\po).
\end{equation}
We use Proposition~\ref{prop:3.5.6} to decompose 
$\exp(\vt_n+\qt_n):$ we can find
an $\Ad(\Vf_0)$-invariant 
linear subspace $\lt$  of $(\vt_n+\qt_n)$ such that 
$\vt_n+\qt_n=\lt\oplus\vt_n$ and
\begin{equation}\label{forundici}
\lt\oplus\vt_n\ni (X,Y)\longrightarrow \exp(X)\cdot\exp(Y)\in
\Vf_n\cdot\Qf_n=
\exp(\vt_n+\qt_n)
\end{equation}
is a diffeomorphism. We obtained: 
\begin{thm} \label{tmforotto}
Let $\ft_0$ and $\lt$ be defined by \eqref{fornovo} and
\eqref{forundici}. Then we have a direct product decomposition 
\begin{equation}
\Kf=\Kf_0\cdot\exp(\ft_0)\cdot\exp(\lt)\cdot\Vf',
\end{equation}
where $\Vf'=\exp(\vt_n)\cdot\exp(\vt\cap\po).$\par 
Then $\Ff_0=\exp(\ft_0)\cdot\exp(\lt),$ 
with  the adjoint action of $\Vf_0,$ 
is the typical fiber of
the Mostow fibration: 
\begin{equation}\vspace{-22pt}
M_-\simeq\Kf/\Vf \simeq \Kf_0\times_{\Vf_0}\Ff_0.
\end{equation}
\end{thm}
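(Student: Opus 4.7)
The plan is to combine Theorem~\ref{teotrequindici} applied to the auxiliary pair $(\kt_0,\et)$ with $\et=\vt+\qt_n$, with Proposition~\ref{prop:3.5.6} applied to the unipotent group whose Lie algebra is $\vt_n+\qt_n$, and then to read off the Mostow description from the uniqueness built into the resulting direct product decomposition.

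The first step invokes the \HNR{} property of $(\kt_0,\et)$, recorded in the discussion preceding the statement and stemming from the fact that $\qt\in\Pp_0(\wt)$ has horocyclic nilradical $\qt_n$. Theorem~\ref{teotrequindici}, together with the definition~\eqref{fornovo} of $\ft_0$, produces the four-factor decomposition~\eqref{quattrododici}
\begin{equation*}
\Kf=\Kf_0\cdot\exp(\ft_0)\cdot\exp(\vt_n+\qt_n)\cdot\exp(\vt\cap\po).
\end{equation*}
In the second step I refine the middle factor. The inclusions $\vt\subset\wt\subset\qt$, together with $\qt_n$ being an ideal of $\qt$, imply $[\vt_n,\qt_n]\subset\qt_n$, so $\vt_n+\qt_n$ is a nilpotent Lie subalgebra of $\kt$ in which $\vt_n$ sits as an $\Ad(\Vf_0)$-invariant Lie subalgebra; the invariance is automatic since $\Vf_0$ normalizes the nilradical of $\vt$ and, being contained in $\Qf$, preserves $\qt_n$ as well. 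As $\Vf_0$ is compact, its adjoint action on $\vt_n+\qt_n$ is completely reducible, so Proposition~\ref{prop:3.5.6} supplies the $\Ad(\Vf_0)$-invariant complement $\lt$ appearing in~\eqref{forundici}, and substitution yields the asserted direct product decomposition with $\Vf'=\exp(\vt_n)\cdot\exp(\vt\cap\po)$.

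For the Mostow description I would use the Cartan-type decomposition $\Vf=\Vf_0\cdot\Vf'$ of~\eqref{dcmpv} together with the uniqueness of the four-factor decomposition just established. Writing any $k\in\Kf$ uniquely as $k=x\,h\,v'$ with $x\in\Kf_0$, $h\in\Ff_0=\exp(\ft_0)\cdot\exp(\lt)$, $v'\in\Vf'$, the right action of $u\in\Vf_0$ becomes, after rewriting $hu=u(u^{-1}hu)$ and $v'u=u(u^{-1}v'u)$, the action $(x,h,v')\mapsto(xu,u^{-1}hu,u^{-1}v'u)$. This is well-defined because $\Ad(\Vf_0)$ preserves $\po$, preserves $\vt+\qt_n$ and hence $\ft_0$, and preserves $\lt$ by construction. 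Quotienting by the $\Vf'$-factor leaves the diagonal $\Vf_0$-action $(x,h)\mapsto(xu,u^{-1}hu)$ on $\Kf_0\times\Ff_0$, yielding the $\Kf_0$-equivariant diffeomorphism $\Kf_0\times_{\Vf_0}\Ff_0\simeq\Kf/\Vf=M_-$. The chief delicacy is the book-keeping of this last step: one must check that the three successive uniqueness statements (from the Cartan decomposition of $\Vf$, from Theorem~\ref{teotrequindici}, and from Proposition~\ref{prop:3.5.6}) assemble coherently into the single uniqueness statement for $\Kf=\Kf_0\cdot\exp(\ft_0)\cdot\exp(\lt)\cdot\Vf'$, so that the quotient by $\Vf$ is genuinely realized as $\Kf_0\times_{\Vf_0}\Ff_0$ rather than merely covered by it.
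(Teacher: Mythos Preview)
Your proposal is correct and follows essentially the same route as the paper: the paper's proof is precisely the two-step assembly you describe, namely applying Theorem~\ref{teotrequindici} to the auxiliary {\HNR} pair $(\kt_0,\et)$ with $\et=\vt+\qt_n$ to obtain~\eqref{quattrododici}, then splitting the unipotent factor $\exp(\vt_n+\qt_n)$ via Proposition~\ref{prop:3.5.6}. Your added discussion of how the $\Vf_0$-action descends to give the fibered description $M_-\simeq\Kf_0\times_{\Vf_0}\Ff_0$ spells out a step the paper leaves implicit.
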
 
\qed
\begin{lem} \label{lem:3.5.11}
If
 $\Nf$ is a unipotent subgoup of $\Kf,$ then, for every $p\in\Pf_0(n)$, the map 
\begin{equation}\label{herm4.10}
 \Nf\ni{z} \longrightarrow z^* p z \in {N}_p=\{z^*pz\mid z\in\Nf\} \end{equation}
is a diffeomorphism.
\end{lem}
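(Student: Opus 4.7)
The plan is to show that the orbit map $\phi\colon\Nf\to{N}_p$, $\phi(z)=z^*pz$, is a bijective smooth immersion whose image is closed in $\Pf_0(n)$; these properties will together force it to be a diffeomorphism onto $N_p$. For \emph{injectivity} it suffices to show that the stabilizer of $p$ in $\Nf$ is trivial. If $w\in\Nf$ satisfies $w^*pw=p$, I would set $u=p^{1/2}wp^{-1/2}$: a direct computation gives
\[
 u^*u=p^{-1/2}w^*pwp^{-1/2}=\Id_n,
\]
so $u$ is unitary. On the other hand $u$ is similar to $w\in\Nf$, hence unipotent; a unitary matrix with all eigenvalues equal to $1$ is $\Id_n$, so $w=\Id_n$.

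For \emph{injectivity of the differential}, by $\Nf$-equivariance it suffices to check at the identity, where $d\phi(e)$ is the map $\nt\ni X\mapsto X^*p+pX$. If this vanishes, the same conjugation trick produces $Y=p^{1/2}Xp^{-1/2}$ satisfying $Y^*+Y=0$, so $Y$ is skew-Hermitian, hence diagonalizable over $\C$; but $Y$ is similar to the nilpotent $X\in\nt$, forcing $Y=0$ and $X=0$. Combined with the previous step, $\phi$ is a bijective smooth immersion.

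The main obstacle is upgrading this to a diffeomorphism, which amounts to showing that $\phi^{-1}$ is continuous, equivalently that $N_p$ is an embedded submanifold of $\Pf_0(n)$. My preferred argument invokes the Kostant--Rosenlicht theorem: $\Nf$ is a unipotent algebraic subgroup of $\SL_n(\C)$ acting by the real-algebraic transformation $q\mapsto z^*qz$ on the real-algebraic variety $\Pf_0(n)$, so the orbit $N_p$ is closed. A bijective immersion onto a closed subset is automatically proper and hence a diffeomorphism onto its image. Should an argument internal to the paper be preferred, one can induct on $\dim\nt$: pick a one-dimensional central subalgebra $\cg\subset\nt$, use Proposition~\ref{prop:3.5.6} to produce an $\Ad(\exp(\cg))$-invariant complement $\lt$ with $\nt=\lt\oplus\cg$, apply the inductive hypothesis to $\Nf/\exp(\cg)$, and settle the one-parameter case by direct polynomial parametrization of the curve $t\mapsto\exp(tX)^*p\exp(tX)$.
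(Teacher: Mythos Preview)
Your proof is correct, but the paper takes a shorter path. Rather than treating injectivity, immersion, and properness as three separate problems, the paper observes that the full group $\Kf$ already acts transitively on $\Pf_0(\kt)$ by $(z,q)\mapsto z^*qz$, so the orbit map induces a global diffeomorphism $\Kf/\mathbf{Stab}(p)\xrightarrow{\sim}\Pf_0(\kt)$. The stabilizer $\mathbf{Stab}(p)$ is conjugate to $\Kf_0$, hence compact, and therefore meets the unipotent $\Nf$ only in the identity. The map \eqref{herm4.10} is then nothing but the composite $\Nf\hookrightarrow\Kf\to\Kf/\mathbf{Stab}(p)\xrightarrow{\sim}\Pf_0(\kt)$; since $\Nf$ is closed in $\Kf$ and the quotient map has compact fibers, this composite is automatically a proper injective immersion, hence a diffeomorphism onto $N_p$.

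The difference in practice: your conjugation-by-$p^{1/2}$ trick is a concrete unpacking of ``compact stabilizer meets unipotent group trivially,'' and your appeal to Kostant--Rosenlicht (or the inductive argument via Proposition~\ref{prop:3.5.6}) replaces the one-line properness argument coming from compactness of the fibers. Both are valid; the paper's approach is shorter because it leans on the ambient homogeneous-space structure rather than analyzing $\Nf$ in isolation, while your approach has the virtue of being self-contained and would survive in settings where no transitive action with compact isotropy is available.
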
 
\begin{proof} 
In fact the stabilizer 
$\mathbf{Stab}(p)$ 
of $p$ for the right action $$\Kf\times\Pf_0(\kt)\ni (z,x)\to{z}^*\cdot{x}\cdot{z}\in\Pf_0(\kt)$$
of $\Kf$ on $\Pf_0(\kt)$  
is a compact group and hence has trivial intersection with $\Nf$.
Thus \eqref{herm4.10} %
is a diffeomorphism with the image, being the restriction %
to $
\Nf\simeq\Nf/\{e_{\Kf}\}$ %
of the diffeomorphism 
$\Kf/\mathbf{Stab}(p)\to\Pf_0(\kt)$.
\end{proof}
\begin{cor} \label{cor:3.5.12}
Fix $\qt\in\Pp_0(\wt)$ and let 
$\ft_0$ and $\lt$ be the corresponding subspaces of
$\kt$ of Theorem~\ref{tmforotto}. 
Then the elements $X\in\ft_0$ and $Z\in\lt$ of the
decomposition 
\begin{equation*}
 \zetaup=u\cdot\exp(X)\cdot\exp(Z)\cdot{v},\quad\text{with \;\; $u\in\Kf_0$, $v\in\exp(\vt_n)\cdot\exp(v\cap\pt_0)$}
\end{equation*}
are obtained in the following way: 
\begin{itemize}
\item[(a)]
 $[0,1]\ni{t}\to\exp(2tX)$ is the geodesic in $\Pf_0(\kt)$ joining $e_{\Kf}$ to the unique point 
 $p_0$ of $\tilde{N}_{\zetaup^*\cdot\zetaup}
 =\{z^*\cdot\zetaup^*\cdot\zetaup\cdot{z}\mid z\in\Vf\cdot\Qf_n\}$
at minimal distance from $e_{\Kf};$
\item[(b)] $Z$ is the unique element of $\lt$ such that \; $\exp(Z^*)\cdot{p}_0\cdot\exp(Z)$ belongs to 
$N_{p_0}=\{z^*\cdot{p}_0\cdot{z}\mid z\in\Vf\}$.
\end{itemize}
 \end{cor}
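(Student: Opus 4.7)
The plan is to reduce Corollary~\ref{cor:3.5.12} to two successive Mostow-type decompositions, paralleling the proof of Proposition~\ref{propo3.3.4}. The key preliminary observation, made just before \eqref{fornovo}, is that the pair $(\kt_0,\et)$ with $\et = \vt+\qt_n$ is \HNR{}. Hence Theorem~\ref{teotrequindici} gives the direct product decomposition $\Kf = \Kf_0\cdot\exp(\ft_0)\cdot\Ef'$ for the group $\Ef = \Vf\cdot\Qf_n$, while Proposition~\ref{prop:3.5.6}, applied with $\Nf = \exp(\vt_n+\qt_n)$, $\eg = \vt_n$, and the $\Ad(\Vf_0)$-action (completely reducible because $\Vf_0$ is compact), further splits $\Ef' = \exp(\lt)\cdot\Vf'$. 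In this way $\zetaup$ decomposes in two stages: first as $\zetaup = u\exp(X)e'$ with $e'\in\Ef'$, then as $e' = \exp(Z)v$ with $Z\in\lt$ and $v\in\Vf'$.

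For part (a), I would replay the nearest-point argument of Proposition~\ref{propo3.3.4} with $\Ef$ in place of $\Vf$: the orbit $\tilde N_{\zetaup^*\zetaup}$ is a properly embedded submanifold of $\Mf_n$, so it admits a point nearest to $e_\Kf$, and the minimizing geodesic is perpendicular to $\tilde N_{\zetaup^*\zetaup}$ there by the non-positive curvature of $\Mf_n$. Transporting by the $\SL_n(\C)$-isometry $q\mapsto gqg^*$ with $g=(\exp(Z)v)^{-*}$, which sends $\zetaup^*\zetaup$ to $\exp(2X)$ and carries $\tilde N_{\zetaup^*\zetaup}$ onto the $\Ef$-orbit of $e_\Kf$, and invoking the orthogonality of $\ft_0$ to the tangent space of this latter orbit at $e_\Kf$ together with Lemma~\ref{lem:3.13.3}, I would identify the nearest point as $p_0 = \exp(2X)$ with minimizing geodesic $t\mapsto\exp(2tX)$.

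For part (b), having pinned down $X$ and hence $p_0$, one exploits the identity $\zetaup^*\zetaup = v^*\exp(Z^*)p_0\exp(Z)v$. Rewriting this as $\exp(Z^*)p_0\exp(Z) = v^{-*}\zetaup^*\zetaup v^{-1}$ and tracing through the factorization using the normality of $\Qf_n$ in $\Ef$ (so that $\exp(Z)v$ can be reordered as a product in $\Vf\cdot\Qf_n$) places $\exp(Z^*)p_0\exp(Z)$ on the $\Vf$-orbit $N_{p_0}$. Uniqueness of $Z\in\lt$ satisfying this intersection condition then follows from Lemma~\ref{lem:3.5.11} applied to $\Qf_n$: the map $\exp(\lt)\ni\exp(Z)\mapsto\exp(Z^*)p_0\exp(Z)$ is a diffeomorphism onto its image, which is transversal to the $\Vf$-orbits inside $\tilde N_{p_0}$ by the construction of $\lt$ as an $\Ad(\Vf_0)$-invariant complement of $\vt_n$ in $\vt_n+\qt_n$.

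The main obstacle, I expect, lies in the bookkeeping for part (b): one must carefully verify that the transversality of $\exp(\lt)$ to the $\Vf$-orbits of $\tilde N_{p_0}$, combined with the reordering afforded by the normality of $\Qf_n$ in $\Ef$, really does pin down $\exp(Z^*)p_0\exp(Z)$ as an element of the smaller orbit $N_{p_0}$ rather than only of $\tilde N_{p_0}$, and does so with a unique preimage in $\lt$. Part (a), by contrast, is essentially a mechanical re-run of Proposition~\ref{propo3.3.4} once the \HNR{} property of $\et$ is invoked.
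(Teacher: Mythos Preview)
Your two-stage strategy is exactly the paper's: first apply the {\HNR} decomposition of Theorem~\ref{teotrequindici} to $\Ef=\Vf\cdot\Qf_n$ to extract $X$, then split the unipotent factor via Proposition~\ref{prop:3.5.6} to extract $Z$. Part (a) is, as you say, a straight re-run of the nearest-point argument of Proposition~\ref{propo3.3.4}.

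For part (b) there is a genuine wrinkle you gloss over. Your computation $\exp(Z^*)p_0\exp(Z)=v^{-*}\zetaup^*\zetaup\,v^{-1}$ places this point in the $\Vf$-orbit $N_{\zetaup^*\zetaup}$ of $\zetaup^*\zetaup$, \emph{not} in $N_{p_0}=\{z^*p_0z\mid z\in\Vf\}$ as the statement reads. Indeed, the condition ``$\exp(Z^*)p_0\exp(Z)\in N_{p_0}$'' is trivially satisfied by $Z=0$, so the printed statement is almost certainly a typo for $N_{\zetaup^*\zetaup}$ (equivalently, for the $\Vf_n$-orbit of $p_{\xiup}=\xiup^*p_0\xiup$ in the paper's notation). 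Your reordering-by-normality argument cannot bridge $N_{\zetaup^*\zetaup}$ and $N_{p_0}$---they are different $\Vf$-orbits inside $\tilde N_{p_0}$ whenever $Z\neq 0$---so that step should simply be dropped and the target corrected.

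For uniqueness the paper's argument is tighter than your transversality sketch: it observes that $p_{\xiup}$ lies in the $\Qf_n$-orbit of $p_0$, applies Lemma~\ref{lem:3.5.11} to get a unique $\xiup\in\Qf_n$ with $p_{\xiup}=\xiup^*p_0\xiup$, and then reads off $Z\in\lt$ and $w\in\Vf_n$ from the diffeomorphism $\lt\times\Vf_n\simeq\Qf_n$ of Proposition~\ref{prop:3.5.6}. This avoids having to argue transversality of $\exp(\lt)$-orbits to $\Vf$-orbits inside the larger manifold $\tilde N_{p_0}$, which is where your proposed verification would require extra work (the $\Vf$-action involves the reductive piece $\exp(\vt\cap\pt_0)$, not just $\Vf_n$).
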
 
\begin{proof}
 Indeed the Mostow fibration of $M_-'=\Kf/(\Vf\cdot\Qf_n)$ can be taken to have a \textit{hermitian}
 typical fiber $\exp(\ft_0)$ and correspondingly we obtain a unique decomposition
\begin{equation*}
 \zetaup= u \cdot \exp(X)\cdot\xiup\cdot\exp(Y) \;\;\text{with\;\; $\xiup\in\Qf_n$ and $Y\in\vt\cap\pt_0,$}
\end{equation*}
The characterization of $X$ coming from the proof of Proposition~\ref{propo3.3.4}
yields (a). \par
Next we consider \; $p_{\xiup}=\xiup^*\cdot\exp(2X)\cdot\xiup=\xiup^*\cdot{p_0}\cdot\xiup$.
By Lemma~\ref{lem:3.5.11} and the choice of $\lt$ 
we know that the element $p_{\xiup}$
of 
$\{z^*\cdot{p_0}\cdot{z}\mid z\in\Qf_n\}$
uniquely decomposes as a product ${w^*}\cdot\exp(Z^*)\cdot{p}_0\cdot\exp(Z)\cdot{w}$
with $w\in\Vf_{\!{n}}$ and $Z\in\lt$. This completes the~proof.
\end{proof}
\section{Application to Dolbeault and \texorpdfstring{$CR$}{TEXT} cohomologies}\label{cohomo}
The cohomology groups of the tangential Cauchy-Riemann complex on real-analytic forms
on $M_0$ is the inductive limit of the corresponding Dolbeault cohomology groups of its
tubular neighborhoods in $M_-.$ We know  by \cite{HN1995} that 
in some degrees
these groups coincide with those computed on tangential
smooth forms or on currents.
We will employ Andreotti-Grauert theory to compare the tangential $CR$ 
cohomology  on $M_0$
with the corresponding \textit{global} Dolbeault cohomology of $M_-.$
To this aim 
we will use the Mostow fibration $M_-\to{M}_0$ to construct
 a non negative exhaustion fuction for $M_-$,  vanishing on $M_0,$ and
 having a complex Hessian whose signature reflects
the pseudoconvexity/pseudoconcavity of $M_0.$
In this way we prove relations of
 the $CR$ cohomology of $M_0$ with the Dolbeault cohomololy of the 
 $\Kf$-orbit  $M_-$, similar to what J.A.Wolf did  in \cite{WoSc84}
for the relationship of 
the open orbits $M_+$ of a real form $\Gf_0$ 
of a complex semisimple Lie group $\Gf$
in a flag $M$ of $\Gf$ with the structure 
of their Matsuki duals $M_-=M_0,$ which in this case are 
compact complex manifolds.
%
\subsection{An Exhaustion Function for $M_{-}$} \label{k0orb}
In \cite{gr58} H. Grauert  noticed
that a real-analytic 
manifold admits  a 
fundamental systems of Stein tubular
neighborhoods in any of its complexifications. 
In fact,  a homogeneous analogue of Grauert's theorem 
is the fact that 
the complexification $\Kf$ 
of a compact Lie group $\mathbf{K}_0$ 
is Stein, and the isomorphism provided by the Cartan decomposition 
\begin{equation*}
 \mathbf{K}_0\times\mathfrak{k}_0\ni (x,X) \longrightarrow x\cdot\exp(iX)\in\mathbf{K}
\end{equation*}
also yields the exhaustion function 
\begin{equation*}
 \mathbf{K}\ni  x\cdot \exp(iX)\longrightarrow \|X\|^2=-\kll(X,X)\in\mathbb{R},
\end{equation*}
which is zero on $\mathbf{K}_0$,
 positive on $\mathbf{K}\setminus\mathbf{K}_0$ and  strictly pseudo-convex everywhere.
Here and in the following we shall denote by $\kll$ 
both 
 the negative definite invariant form of a  faithfull unitary
representation of $\mathfrak{k}_0$ 
and its $\C$-bilinear extension to~$\kt$. When $\kt_0$ is semisimple, 
the adjoint representation is faithful and we may take as $\kll$ 
the 
Killing form.\par 
We 
proceed in a similar way to construct an exhaustion function on $M_-$ for the
canonical embedding $M_0\hookrightarrow{M}_-$ of a $\nt$-reductive 
$\Kf_0$-homogeneous compact $CR$ manifold $M_0$.
We use the notation of the previous sections.
\par
Assume that the pair $(\kt_0,\vt)$ is $\nt$-reductive and {\HNR}.
We already noticed that the last condition is \textit{natural} if we consider on $M_0$
\textit{maximal} $\Kf_0$-invariant $CR$ structures.
Then, by Corollary~\ref{cor:3.5.12}, we have a direct product decomposition 
\begin{gather}\label{eq:4.1}
 \Kf=\Kf_0\cdot\exp(\ft_0)\cdot\exp(\vt_n)\cdot\exp(\vt\cap\pt_0)
\intertext{with \; $\pt_0={\mathpzc{i}}\!\cdot\!\kt_0$\; and \;$\ft_0=(\vt+\overline{\vt})^\perp\cap\pt_0.$
Moreover,
the $\exp(\ft_0)$-term in \eqref{eq:4.1} is characterized by} 
\label{eq:4.2}
\left\{ \begin{gathered}
\text{if}\;\; 
\zetaup=u\cdot\exp(X)\cdot{v},\;\;\text{with\;\; $u\in\Kf_0,$ $X\in\ft_0$ and $v\in\Vf$, then} \\
\|X\|= \tfrac{1}{2}\dist(\zetaup^*\zetaup,N),\;\; \text{for}\;\;
N=\{v^*\cdot v\mid v\in\Vf\}
\subset\Pf_0(\kt).
\end{gathered}
\right.
\end{gather}
This is indeed a consequence of  
Corollary~\ref{cor:3.5.12} when $\lt=\{0\}.$
\par 

By passing to the quotient, 
 the map 
\begin{equation*}
 \mathbf{K}_0\times\mathfrak{f}_0\ni (x ,X) \longrightarrow \|X\|^2=\kll(X,X)\in\mathbb{R}.
\end{equation*}
defines a smooth exhaustion function (as usual
square brackets mean equivalence classes)
\begin{equation}\label{exha}
 \phiup:M_-\simeq \Kf_0\times_{\If_0}\ft_0\ni [x,X]\longrightarrow \|X\|^2\in\R.
\end{equation}
We have: 
\begin{lem}\label{exhalemm}
If $(\kt_0,\vt)$ is $\nt$-reductive and 
{\HNR}, 
 then the map $\phiup$ of \eqref{exha} has the properties:  
\begin{enumerate}
\item $\phiup\in\Ci(M_-,\R)$ and $\phiup\geq{0}$ on $M_-$;
\item $\phiup^{-1}(0)=M_0$ 
and $d\phiup\neq{0}$ if $\phiup>0$ 
; 
\item $\phiup$ is invariant under the left action of $\mathbf{K}_0$ on $M_-$ : 
\begin{equation*}\vspace{-20pt}
 \phiup({x}\cdot{ p})=\phiup(p),\quad\forall p\in M_{-}\,,\;\;\forall {x}\in\mathbf{K}_0.
\end{equation*}
\qed 
\end{enumerate} 
\end{lem}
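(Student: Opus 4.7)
The plan is to derive each of the three properties as a direct consequence of the Mostow diffeomorphism $\Kf_0 \times_{\If_0}\ft_0 \simeq M_-$ furnished by Theorem~\ref{thm3.14} (applicable because $(\kt_0,\vt)$ is {\HNR}), together with the $\Ad(\Kf_0)$-invariance of $\kll$. None of the steps should require substantial work.

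First, I would verify that $\phiup$ is well defined on the quotient $\Kf_0 \times_{\If_0} \ft_0$. The defining equivalence relation identifies $(x,X)$ with $(xu,\Ad(u^{-1})X)$ for $u\in\Vf_0=\If_0$. Since $\Vf_0\subset\Kf_0$ is compact, $\Ad(u)$ preserves $\kt$, commutes with conjugation with respect to $\kt_0$ (hence respects the Cartan decomposition $\kt=\kt_0\oplus\pt_0$) and stabilizes $\vt+\bar\vt$; consequently $\Ad(\Vf_0)$ leaves the subspace $\ft_0=(\vt+\bar\vt)^\perp\cap\pt_0$ invariant. As $\kll$ is $\Ad(\Kf_0)$-invariant, the function $X\mapsto\|X\|^2$ is in particular $\Ad(\Vf_0)$-invariant, so $\phiup$ descends to a well-defined function. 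Its smoothness follows from Theorem~\ref{thm3.14} and the smoothness of the norm squared, and $\phiup\geq 0$ is immediate from the sign convention making $\|\cdot\|^2$ positive on $\pt_0$. This establishes~(1).

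For~(2), the equality $\phiup([x,X])=\|X\|^2=0$ holds iff $X=0$, and the image of $\{[x,0]:x\in\Kf_0\}$ under the Mostow diffeomorphism is exactly $\Kf_0/\Vf_0=M_0$, so $\phiup^{-1}(0)=M_0$. For the non-vanishing of $d\phiup$: the pullback $\tilde\phiup(x,X)=\|X\|^2$ of $\phiup$ to $\Kf_0\times\ft_0$ has differential in the $\ft_0$-direction given by $Y\mapsto 2\kll(X,Y)$, which is nonzero for $X\neq 0$ because $\kll$ is nondegenerate on $\ft_0\subset\pt_0$. Since the projection $\Kf_0\times\ft_0\to\Kf_0\times_{\If_0}\ft_0$ is a submersion and $\tilde\phiup=\phiup\circ\mathrm{pr}$, it follows that $d\phiup\neq 0$ at every point where $\phiup>0$.

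Finally, for~(3), the left $\Kf_0$-action on $\Kf_0\times_{\If_0}\ft_0$ takes the form $y\cdot[x,X]=[yx,X]$, which leaves the $\ft_0$-component untouched, so $\phiup(y\cdot[x,X])=\|X\|^2=\phiup([x,X])$, proving $\Kf_0$-invariance. The entire lemma is thus an unpacking of Theorem~\ref{thm3.14} combined with elementary invariance properties of $\kll$, and I do not anticipate a genuine obstacle in any step.
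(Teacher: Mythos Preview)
Your argument is correct and is exactly the unpacking the paper has in mind: the lemma is stated with a \qed and no separate proof, because each item follows immediately from the Mostow diffeomorphism of Theorem~\ref{thm3.14} together with the $\Ad(\Kf_0)$-invariance of $\kll$. Your verification that $\Ad(\Vf_0)$ preserves $\ft_0$, the pullback argument for $d\phiup\neq 0$, and the observation that the left $\Kf_0$-action fixes the $\ft_0$-component are precisely the routine checks the paper leaves implicit.
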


\begin{ntz}\label{Uc}The \emph{level 
and sublevel 
sets}\index{Level sets} of  
$\phiup$ 
will be 
 denoted by 
\begin{equation} 
\Phi_c=\{p\in M_-\mid \phiup(p)=c\}\Subset M_-\;
\;\; \text{and}\;\;\; 
\Omega_c=\{p\in{M}_-\mid \phiup(p)<c\}.
\end{equation}
\end
{ntz}

\subsection{$\mathbf{K}_0$-Orbits in $M_-$}\label{k1orb}
The level sets $\Phi_c$ are foliated by $\Kf_0$-orbits.
Since all points of $M_-$ have 
representatives  of the form $x\cdot\exp(X)$  with 
 $x\in\Kf_0$ and $X\in\ft_0$, then every $\mathbf{K}_0$-orbit  
intersects the fiber $F_0$  
over the 
base point $p_0$ at a point $p_X=[\exp(X)],$ 
for some $X\in\ft_0$. An $x\in\Kf_0$ stabilizes $p_X$ if and only if
$x\cdot\exp(X)$ is still 
a representative of $p_X$, and this, by the equivalence relation defining
$\Kf_0\times_{\If_0}\ft_0$,  means that $x\in\If_0$ and $\Ad(x)(X)=X$. 
Indeed the equation $x\exp(X)z=\exp(X)$ with $z\in\Vf$ implies, by the uniqueness of the
Mostow decomposition, that $z=x^{-1}\in\If_0$ and $x\exp(X)x^{-1}=\exp(\Ad(x)(X))=\exp(X),$
yielding $\Ad(x)(X)=X$.
\par 
 Thus the  $\mathbf{K}_0$-orbit 
 \begin{align}\label{Mt}
 M_X&=\{x\cdot{p_X}=[x\cdot\exp(X)]\mid x\in\Kf_0\}
\intertext{in $M_-$ through $p_X$ can be identified with 
the homogeneous space 
 $\mathbf{K}_0/{\Vf}_X$, where}
\notag \Vf_X&=
\{x\in\If_0\mid \Ad(x)(X)=X\},
\intertext{is the stabilizer  of $p_X$ in $\Kf_0.$ It
is a closed Lie subgroup of $\Kf_0$ 
 with Lie algebra}\notag
\vt_X&=\{Y\in\vt_0\mid [Y,X]=0\}.
\end{align}
\begin{lem}\label{lemma5.2}
 $M_X$ is a compact $\Kf_0$-homogeneous $CR$-manifold with $CR$-algebra
 $\big(\kt_0,\Ad(\exp(X))(\vt)\big)$ at $p_X=[\exp(X)].$\qed
\end{lem}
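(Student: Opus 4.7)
The plan is to verify the three claims in sequence: compactness, the existence of an induced homogeneous $CR$ structure, and the identification of the $CR$ algebra at $p_X$. Compactness is immediate: $M_X$ is the continuous image of $\Kf_0$ under $x\mapsto x\cdot p_X$. For the $CR$ structure, observe that the left action of $\Kf$ on $M_-=\Kf/\Vf$ is holomorphic, so $\Kf_0$ acts by $CR$ automorphisms of $M_-$; therefore the subspaces
\[
T^{0,1}_p M_X := T^{0,1}_p M_-\cap (T_p M_X\otimes\C),\qquad p\in M_X,
\]
are permuted by the $\Kf_0$-action and have constant dimension along the orbit, making $M_X$ into a $\Kf_0$-homogeneous $CR$ manifold.

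For the $CR$ algebra at $p_X$, denote by $\pi_X:\Kf_0\to M_X$ the orbit map $x\mapsto x\cdot p_X$ and extend $d\pi_X$ to a $\C$-linear map $\kt\to T^{\C}_{p_X}M_-$. The key step is to transport the base-point identification $\vt = d\pi^{-1}(T^{0,1}_{p_0}M_-)$ along $\exp(X)$. Using the conjugation identity
\[
\exp(-X)\exp(tZ)\exp(X)=\exp\!\bigl(t\,\Ad(\exp(-X))(Z)\bigr),\qquad Z\in\kt,
\]
and the fact that $p_X=\exp(X)\cdot p_0$ in $M_-$, one rewrites
\[
\exp(tZ)\cdot p_X = L_{\exp(X)}\!\Bigl(\exp\!\bigl(t\,\Ad(\exp(-X))(Z)\bigr)\cdot p_0\Bigr),
\]
where $L_{\exp(X)}:M_-\to M_-$ is the (holomorphic) action of $\exp(X)\in\Kf$. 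Differentiating at $t=0$ yields
\[
d\pi_X(Z) = (L_{\exp(X)})_{*}\,d\pi\!\bigl(\Ad(\exp(-X))(Z)\bigr).
\]

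Because $L_{\exp(X)}$ is a biholomorphism of $M_-$, its differential carries $T^{0,1}_{p_0}M_-$ isomorphically onto $T^{0,1}_{p_X}M_-$. Hence $d\pi_X(Z)\in T^{0,1}_{p_X}M_-$ if and only if $\Ad(\exp(-X))(Z)\in\vt$, i.e.\ $Z\in\Ad(\exp(X))(\vt)$. Since $d\pi_X$ takes values in $T^{\C}_{p_X}M_X$ and $T^{0,1}_{p_X}M_X=T^{0,1}_{p_X}M_-\cap T^{\C}_{p_X}M_X$, the preimage $d\pi_X^{-1}(T^{0,1}_{p_X}M_X)$ coincides with $d\pi_X^{-1}(T^{0,1}_{p_X}M_-)=\Ad(\exp(X))(\vt)$, which is precisely the announced $CR$ algebra.

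There is no real obstacle here: all three ingredients (compactness, induced $CR$ structure, and the transport formula) are routine once one exploits that $\exp(X)$ acts holomorphically on $M_-$. The only mild subtlety is verifying that the intersection defining $T^{0,1}M_X$ has constant dimension, which is automatic from $\Kf_0$-homogeneity.
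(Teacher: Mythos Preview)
Your proof is correct. The paper itself offers no argument for this lemma---it is stated with an immediate \qed, the authors evidently regarding it as a direct consequence of the definitions. Your write-up supplies exactly the verification they omit: compactness from the orbit picture, the induced $CR$ structure from the holomorphic $\Kf$-action, and the identification of the $CR$ algebra via the conjugation formula $\exp(tZ)\cdot p_X = L_{\exp(X)}\bigl(\exp(t\,\Ad(\exp(-X))(Z))\cdot p_0\bigr)$, which amounts to recognizing that the stabilizer of $p_X=[\exp(X)]$ in $\Kf$ is $\exp(X)\,\Vf\,\exp(-X)$ with Lie algebra $\Ad(\exp(X))(\vt)$.

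One small notational point: when you write ``differentiating at $t=0$ yields $d\pi_X(Z)=(L_{\exp(X)})_*\,d\pi(\Ad(\exp(-X))(Z))$'', the derivative of the curve $t\mapsto\exp(tZ)\cdot p_X$ is literally a real tangent vector (the value of the holomorphic orbit map $\Kf\to M_-$), whereas $d\pi_X$ as you introduced it is the $\C$-linear extension $\kt\to T^{\C}_{p_X}M_-$. The passage between the two is standard---$d\pi_X^{\C}(Z)\in T^{0,1}_{p_X}M_-$ precisely when $Z$ lies in the kernel of the holomorphic differential, i.e.\ in the isotropy $\Ad(\exp(X))(\vt)$---but strictly speaking your displayed identity lives on the holomorphic side and the conclusion on the complexified side. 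This does not affect the validity of the argument.
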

\begin{rmk}\label{prop.cr.diff}
In general, $M_X$ may not be diffeomorphic to $M_0$.
Indeed, 
$M_0$ is a \textit{minimal} $\Kf_0$-orbit in $M_-$ and  
$M_X$ is diffeomorphic (and $CR$-diffeomorphic) to $M_0$ 
if and only if $M_X$ and $M_0$ have the same dimension.
\end{rmk}
\smallskip
\par
For $X\in\ft_0$, the left translation 
$
M_-\ni{p}\longrightarrow \exp(X)\cdot{p}\in{M}_-
$
is a biholomorphism of $M_-$ which transforms $M_0$ onto a $CR$-diffeomorphic 
submanifold 
\begin{equation}\label{tildeMx}
\tilde{M}_X=\exp(X)\cdot{M}_0.\end{equation}

\begin{lem}\label{containedlemma} For $X\in\ft_0$, we have 
\begin{equation}
 \tilde{M}_X\subset\{\phiup\leq\|X\|^2\}
 =\overline{\Omega}_{\|X\|^2} .
\end{equation}
 \end{lem}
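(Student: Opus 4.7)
Take an arbitrary point $q \in \tilde{M}_X = \exp(X) \cdot M_0$ and write it as $q = \exp(X) \cdot [y]$ for some $y \in \Kf_0$, where $[\,\cdot\,]$ denotes the class in $M_- = \Kf/\Vf$. Set $\zetaup = \exp(X) \cdot y \in \Kf$, so that $q = [\zetaup]$, and the value $\phiup(q)$ is, by \eqref{eq:4.2}, equal to $\tfrac{1}{4}\,\dist(\zetaup^*\zetaup,N)^2$, where $N = \{v^{*}v \mid v \in \Vf\} \subset \Pf_0(\kt)$. Thus the lemma reduces to showing that
\begin{equation*}
 \dist(\zetaup^*\zetaup,N) \leq 2\|X\|.
\end{equation*}

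The key observation is that $e = \Id \in N$, because $\Vf$ is a group and $e = e^{*}e$. Since $X \in \ft_0 \subset \pt_0$ is Hermitian symmetric, $\exp(X)^{*} = \exp(X)$, and since $y \in \Kf_0 \subset \SU(n)$, we have $y^{*} = y^{-1}$. Hence
\begin{equation*}
 \zetaup^{*}\zetaup = y^{-1}\exp(2X)\,y.
\end{equation*}
Now the element $y^{-1} \in \SU(n)$ acts on $\Mf_n$ by the isometry $p \mapsto y^{-1}p\,(y^{-1})^{*} = y^{-1}p\,y$, which fixes the basepoint $e$. Applying this isometry we obtain
\begin{equation*}
 \dist\!\bigl(y^{-1}\exp(2X)\,y,\; e\bigr) \;=\; \dist\!\bigl(\exp(2X),\; e\bigr) \;=\; \|2X\| \;=\; 2\|X\|,
\end{equation*}
the last identity being the standard fact that $t \mapsto \exp(tH)$ is the unit-speed geodesic issued from $e$ in the direction $H \in \po(n)$ (see \S\ref{sec3}).

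Combining these observations,
\begin{equation*}
 \dist(\zetaup^{*}\zetaup, N) \;\leq\; \dist(\zetaup^{*}\zetaup, e) \;=\; 2\|X\|,
\end{equation*}
so that $\phiup(q) = \tfrac{1}{4}\dist(\zetaup^{*}\zetaup,N)^{2} \leq \|X\|^{2}$, i.e.\ $q \in \overline{\Omega}_{\|X\|^{2}}$. There is no real obstacle here: the only point to watch is the correct interpretation of the isometric $\SL_n(\C)$-action $(z,p)\mapsto zpz^{*}$ when passing between the multiplicative form $y^{-1}\exp(2X)y$ and the action of $y^{-1}$ on $\exp(2X)$.
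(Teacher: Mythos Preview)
Your proof is correct and follows essentially the same route as the paper's. Both arguments represent a point of $\tilde{M}_X$ as $[\zetaup]$ with $\zetaup=\exp(X)\cdot u$ for $u\in\Kf_0$, compute $\zetaup^*\zetaup=u^{-1}\exp(2X)u$, and bound $\phiup$ by observing that the distance in $\Pf_0(\kt)$ between $e$ and $\zetaup^*\zetaup$ equals $2\|X\|$ via the geodesic $t\mapsto u^{-1}\exp(2tX)u$; the only cosmetic difference is that you invoke the isometry $p\mapsto y^{-1}py$ to reduce to $\dist(e,\exp(2X))$, while the paper names the geodesic directly. (One tiny slip: $t\mapsto\exp(tH)$ is not unit-speed unless $\|H\|=1$, but this does not affect the computation.)
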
 
\begin{proof}
 Let $\pi:\Kf\ni\zetaup
 \to[\zetaup]\in\Kf/\Vf\simeq{M}_-$ be the canonical projection. Any point of $M_0$ is 
 $\pi(u)$ for some $u\in\Kf_0$ and  
then the points $p$ of $\tilde{M}_X$ have the form 
  $p=\exp(X)\pi(u)=\pi(\exp(X)\cdot{u})$. Set $\zetaup=\exp(X)\cdot{u}$.
We know that $\phiup(p)$ is the square of  the half-distance 
in $\Pf_0(\Kf)$ from the base point
$e_{\Kf}$ to
$$N_{\zetaup^*\zetaup}=\{v^*\!\cdot\! \zetaup^*\!\cdot\! \zetaup\!\cdot\! {v}\mid v\in\Vf\}.$$ 
Since the point
$(\zetaup^*\!\cdot\!\zetaup)$ belongs to $N_{\zetaup^*\cdot\zetaup}$ and has distance $2\|X\|$ from
$e_{\Kf}$, (in fact ${t\to{u}^*\!\cdot\!\exp(2tX)\!\cdot\!{u}}$ is the geodesic joining $e_{\Kf}$ to 
$({\zetaup^*\!\cdot\!\zetaup})$), it follows that \mbox{$\phiup(p)\leq\|X\|^2$.}
\end{proof}
We summarize: 
\begin{prop}
 Let $c>0$. Then 
\begin{equation}\label{Ucc}\vspace{-27pt}
 \Phi_c={\bigcup}_{ 
\begin{smallmatrix}
 X\in\ft_0,\\
 \|X\|^2=c
\end{smallmatrix}} M_X\;\; \text{(disjoint union)},\quad 
\tilde{M}_X\subset\{\phiup\leq\|X\|^2\},\;\; \forall X\in\ft_0.
\end{equation}
\qed
\end{prop}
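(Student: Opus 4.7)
The plan is to establish the two assertions separately. The second one, $\tilde{M}_X\subset\{\phiup\leq\|X\|^2\}$, is exactly Lemma~\ref{containedlemma}, so no further work is required there. The first assertion splits into the equality of sets $\Phi_c={\bigcup}_{\|X\|^2=c}M_X$ and the disjointness of the family $\{M_X\}$.

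For the equality, I would argue both inclusions using the description $M_-\simeq\Kf_0\times_{\If_0}\ft_0$ and the explicit form of $\phiup$. Given $p\in\Phi_c$, write $p=[x,X]$ with $x\in\Kf_0$ and $X\in\ft_0$; the condition $\phiup(p)=c$ forces $\|X\|^2=c$, and by definition $p=x\cdot p_X\in M_X$. Conversely, if $X\in\ft_0$ with $\|X\|^2=c$ and $p\in M_X$, then $p=x\cdot p_X$ for some $x\in\Kf_0$, and the $\Kf_0$-invariance of $\phiup$ established in Lemma~\ref{exhalemm}(3) together with $\phiup(p_X)=\|X\|^2=c$ yields $p\in\Phi_c$.

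The main point is the disjointness, and this is where the {\HNR} hypothesis enters. Suppose $p\in M_X\cap M_Y$ with $X,Y\in\ft_0$ and $\|X\|^2=\|Y\|^2=c$; I want to show $M_X=M_Y$. Lifting $p$ to $\Kf$, there exist $x,y\in\Kf_0$ and $v\in\Vf$ with $y\cdot\exp(Y)=x\cdot\exp(X)\cdot v$. Writing $v=v_0\cdot v_1$ with $v_0\in\Vf_0=\If_0$ and $v_1\in\Vf'=\exp(\vt_n)\cdot\exp(\vt\cap\pt_0)$ via the decomposition \eqref{dcmpv}, and moving $v_0$ to the left:
\begin{equation*}
y\cdot\exp(Y)=(xv_0)\cdot\exp(\Ad(v_0^{-1})(X))\cdot v_1.
\end{equation*}
Since $\ft_0=(\vt+\overline{\vt})^\perp\cap\pt_0$ is $\Ad(\Vf_0)$-invariant (as $\Vf_0\subset\Kf_0$ acts by isometries of $\kll$ preserving $\pt_0$ and $\vt+\overline{\vt}$), we have $\Ad(v_0^{-1})(X)\in\ft_0$. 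Then Theorem~\ref{teotrequindici}, asserting the \emph{direct product} decomposition $\Kf=\Kf_0\cdot\exp(\ft_0)\cdot\Vf'$, forces uniqueness of all three factors and in particular $Y=\Ad(v_0^{-1})(X)$. Consequently $p_Y=[e,\Ad(v_0^{-1})(X)]=[v_0,X]=v_0\cdot p_X$, which lies in $M_X$; hence $M_Y=\Kf_0\cdot p_Y=M_X$.

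The only potentially subtle step is the last one: one must verify carefully that $\ft_0$ is $\Ad(\Vf_0)$-invariant (so that $\Ad(v_0^{-1})(X)$ again parametrises a point in the fiber) and invoke the \emph{uniqueness} half of Theorem~\ref{teotrequindici} to conclude. Everything else is bookkeeping with the equivalence relation defining $\Kf_0\times_{\If_0}\ft_0$.
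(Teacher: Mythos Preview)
Your proof is correct and follows the same line as the paper, which in fact presents this proposition merely as a summary of the preceding discussion (Lemma~\ref{exhalemm}, the description of $\Kf_0$-orbits in \S\ref{k1orb}, and Lemma~\ref{containedlemma}) without a separate argument.

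Two small remarks. First, the disjointness is more elementary than you suggest: the $M_X$ are $\Kf_0$-orbits, and orbits of a group action are automatically either equal or disjoint; no appeal to the uniqueness in Theorem~\ref{teotrequindici} is needed for that. The {\HNR} hypothesis enters earlier, in the very existence of the Hermitian Mostow fibration $M_-\simeq\Kf_0\times_{\If_0}\ft_0$ and hence in the definition of $\phiup$ via \eqref{exha}, not in the disjointness step itself. Your more detailed argument is not wrong---it actually yields the finer statement that $M_X=M_Y$ iff $Y\in\Ad(\Vf_0)(X)$---but it is more than is required. Second, a sign slip: from $(e,\Ad(v_0^{-1})(X))\sim(v_0^{-1},X)$ one gets $p_Y=[v_0^{-1},X]=v_0^{-1}\cdot p_X$, not $[v_0,X]$; the conclusion $p_Y\in M_X$ is of course unaffected.
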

\smallskip
In particuar, for $c>0$,  we can draw through each point of  $\Phi_c$
a translate $\tilde{M}_X$ of $M_0,$ which is
$CR$-diffeomorphic to $M_0$ and tangent to $\Phi_c$ \textit{from inside}, i.e.
lying in $\overline{\Omega}_c.$
This means that 
the boundary 
$U_c$  
of $\Omega_c$ is at each point \textit{less convex} than $M_0.$
\subsection{Application to Dolbeault and 
\texorpdfstring{$CR$}{TEXT} Cohomologies I}\label{vanishing.sec}
By  An\-dreotti-Grauert theory (see \cite{AG62}) 
we know that for every coherent sheaf $\mathcal{F}$ 
on  an $r$-pseudoncave
complex  manifold $\mathrm{X}$ we have 
\begin{equation*}
\mathbf{H}^{j}(\mathrm{X},\mathcal{F})<\infty,\;\;\;\forall <r-\mathrm{hd}(\mathcal{F}), 
\end{equation*}
where $\mathrm{hd}(\mathcal{F})$ is the homological dimension
of $\mathcal{F}.$ \par 
We obtain the following:
\begin{thm}\label{vanishing}
Let $M_0$ be a compact $\nt$-reductive 
homogeneous $CR$ manifold, with $(\kt_0,\vt)$
 {\HNR} and canonical complex
 embedding $M_0\hookrightarrow{M}_-$.\par 
If  $M_0$ is an $r$-psudoconvave $CR$-manifold, then 
 $M_-$ is an $r$-pseudoconcave complex manifold and
for every coherent sheaf $\mathcal{F}$ we have
\begin{equation} \label{vanishingth} 
\dim\big(\mathbf{H}^{j}(M_0,\mathcal{F})
\simeq\mathbf{H}^{j}(M_-,\mathcal{F})\big)
<\infty, 
\;\;\forall j<r-\mathrm{hd}(\mathcal{F}).
\end{equation}
In particular,
\begin{equation}\dim\big(
\mathbf{H}^{p,j}(M_0)\simeq \mathbf{H}^{p,j}(M_-)
\big)<\infty,\;\;\forall j<r.
\end{equation}
\end{thm}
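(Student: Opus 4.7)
The plan is to exploit the smooth exhaustion $\phiup$ constructed in \S\ref{k0orb} to transfer the pseudoconcavity of $M_0$ to an Andreotti--Grauert pseudoconcavity property of the ambient complex manifold $M_-$, and then to combine the resulting finiteness with the approximation result of \cite{HN1995}.

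First I would show that $M_-$ is $r$-pseudoconcave. Fix $c>0$ and a point $p_X\in\Phi_c$. Lemma~\ref{containedlemma} asserts that the translate $\tilde{M}_X=\exp(X)\cdot M_0\subset\overline{\Omega}_c$ passes through $p_X$, so $p_X$ is a maximum of $\phiup|_{\tilde{M}_X}$. Because left translation by $\exp(X)$ is a biholomorphism of $M_-$ mapping $M_0$ onto $\tilde{M}_X$, the induced $CR$ structures on the two submanifolds are isomorphic. The restriction of $\partial\bar\partial\phiup(p_X)$ to the holomorphic tangent space of $\tilde{M}_X$ at $p_X$ then agrees, up to a positive factor, with the scalar Levi form of $\tilde{M}_X$ computed on the outer conormal $d\phiup(p_X)$, which is nonzero by Lemma~\ref{exhalemm}. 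The hypothesis that $M_0$ is $r$-pseudoconcave forces this Hermitian form to have at least $r$ negative eigenvalues, so the complex Hessian of $-\phiup$ has at least $r$ positive eigenvalues at every point of $M_-\setminus M_0$. This is precisely the Andreotti--Grauert condition of $r$-pseudoconcavity from \cite{AG62}.

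Once $r$-pseudoconcavity is in hand, the Andreotti--Grauert finiteness and extension theorem applied to the exhaustion of $M_-$ by the relatively compact sublevel sets $\Omega_c$ yields, for every coherent sheaf $\mathcal{F}$ on $M_-$ and every $j<r-\mathrm{hd}(\mathcal{F})$, finite dimensionality of $\mathbf{H}^{j}(\Omega_c,\mathcal{F})$ together with bijectivity of the restriction maps $\mathbf{H}^{j}(\Omega_{c'},\mathcal{F})\to\mathbf{H}^{j}(\Omega_c,\mathcal{F})$ for $0<c<c'$. Letting $c'\nearrow\infty$ identifies this common space with $\mathbf{H}^{j}(M_-,\mathcal{F})$, while letting $c\searrow 0$ and invoking \cite{HN1995} identifies it with the tangential $CR$-cohomology $\mathbf{H}^{j}(M_0,\mathcal{F})$. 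This yields \eqref{vanishingth}, and specializing to $\mathcal{F}=\Omega^{p}_{M_-}$ (which has zero homological dimension) produces the Dolbeault version.

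The delicate point is the first step: turning the geometric containment $\tilde{M}_X\subset\overline{\Omega}_c$ with equality at $p_X$ into a quantitative lower bound on the number of negative eigenvalues of the complex Hessian of $\phiup$, uniform in $p_X$. One has to carefully identify the scalar Levi form of $\tilde{M}_X$ with the restriction of $\partial\bar\partial\phiup$ to the $(1,0)$-tangent directions, and verify that $d\phiup(p_X)$ is non-characteristic for the $CR$ structure of $\tilde{M}_X$. Both assertions rest on the \HNR{} condition and on the explicit Hermitian-fiber description of the Mostow fibration furnished by Theorem~\ref{tmforotto} and the minimum-distance characterization of Corollary~\ref{cor:3.5.12}, which together guarantee that the $CR$ identification $M_0\simeq\tilde{M}_X$ intertwines the two Hermitian structures in the way required by the eigenvalue count.
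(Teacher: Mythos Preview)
Your argument is correct and follows essentially the same route as the paper's own proof: exhibit the smooth exhaustion $\phiup$ coming from the Hermitian-fiber Mostow decomposition, use the translate $\tilde{M}_X\subset\overline{\Omega}_c$ tangent to $\Phi_c$ at $p_X$ to compare the complex Hessian of $\phiup$ with the scalar Levi form of $\tilde{M}_X\simeq M_0$, and then invoke Andreotti--Grauert together with \cite{HN1995}. The only point left implicit in your write-up is the reduction from an arbitrary boundary point to one of the special form $p_X=[\exp(X)]$, which the paper justifies by the $\Kf_0$-invariance of $\phiup$; you should make that step explicit.
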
 
Here we
used the notation $\mathbf{H}^{p,j}$ for the $\bar{\partial}$
and $\bar{\partial}_{M_0}$-cohomologies on forms of type $(p,*)$. 
Because of the validity of the
Poincar\'e lemma in degree $j$, for 
$0<j<r$
(see \cite{23}), they coincide with the \v{C}ech
cohomology with coefficients in the sheaf of germs of
$CR$ or holomorphic $p$-forms. Moreover, in this range, the tangential Cauchy-Riemann complexes
on currents, $\Ci$-smooth forms and real-analytic forms  on $M_0$
have  isomorphic finite dimensional cohomology groups. 
\begin{proof}
By the \textit{HNR} assumption, the exhaustion function 
$\phiup$ in \eqref{exha}   is   well defined. 
Then to verify  
\eqref{vanishingth} 
we can apply Andreotti-Grauert's theory, after
showing that, for $c>0,$  
each subdomain $\Omega_c=\{\phiup<c\}$ is $r$-pseudoconcave. 
To this aim, 
we prove that the complex Hessian of $\phiup$  admits at least  $r$ negative eigenvalues 
on the analytic 
tangent to $\Phi_c
=\partial\Omega_c.$ By exploiting 
the  $\Kf_0$-invariance of $\phiup$,  
we can, without any loss of 
generality, restrict our consideration to points
$p_0=[\exp{(X)} ] \in \Phi_c,$ with $\| X\|^2=c\in\R$. 
We may consider  $(0,1)$-vector fields which are tangent 
to the submanifold $\tilde{M}_X$, 
defined  in  \eqref{tildeMx} and that are also tangent to $\partial{\Omega}_c$ at
$p_0,$ because
$\tilde{M}_X$ is tangent to $\Phi_c$ 
at $p_0.$
By Lemma~\ref{containedlemma},   
$\tilde{M}_X$
is contained in $\overline{\Omega}_c=\{\phiup\leq\|X\|^2\}.$
 Since $\tilde{M}_X$ is \textit{CR}-diffeomorphic to $M_0$,
 it is $r$-pseudoconcave. Being $\tilde{M}_X\subset\overline{\Omega}_c$,
 the restriction of the complex Hessian of
 $\phiup$ to the analytic tangent to $\tilde{M}_X$ at $p_0$
 has at least as many negative eigenvalues as 
 the Levi form of $\tilde{M}_X$ in the codirection $Jd\phiup([\exp(X)]),$ which,
 by the assumption, are at least $r.$ 
This completes the proof.
\end{proof}
\subsection{Application to Dolbeault and \texorpdfstring{$CR$}{TEXT} cohomologies II}
In this section we want to exploit
the amount of pseudo-convexity of the exhaustion function $\phiup.$
We keep the assumption that $(\kt_0,\vt)$ is $\nt$-reductive and
{\HNR} and set \mbox{$\qt=\{Z\in\kt\mid [Z,\vt_n]\subset\vt_n\}$}
for the maximal parabolic subalgebra in  
$\Pp_0(\vt).$ We recall that $\vt_n=\qt_n$ is the nilradical of $\qt.$ 
Let $\Qf$ be the parabolic subgroup of $\Kf$ with $\Lie(\Qf)=\qt$ and $\Qf_r$ its conjugation-invariant
reductive factor. Let $\varpi:\Kf\to{M}_-=\Kf/\Vf$ be the quotient map. The image of $\Qf_r$ by
$\varpi$ is a $\Qf_r$-homogeneous complex submanifold $Q_-$ of~${M}_-.$ 
\begin{lem} \label{lemma5.7}
For every $X\in\ft_0,$ the $CR$ manifold $\tilde{M}_X$ and the complex manifold
$Q_-$ are transversal at $p_X$ and their analytic tangent spaces at $p_X$ are orthogonal for
the complex Hessian of $\phiup.$
\end{lem}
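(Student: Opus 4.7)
The plan is to verify the lemma in three movements: show that $p_X$ lies in both $\tilde M_X$ and $Q_-$, deduce transversality by reduction to $p_0$, and then verify the complex Hessian orthogonality by translating the problem into a Jacobi-field computation modelled on \S\ref{sec3}.

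First I would check that $\ft_0\subset\qt_r$, so that $\exp(X)\in\Qf_r$ and $p_X\in Q_-\cap\tilde M_X$. Under the {\HNR} assumption, $\qt_n=\vt_n$, hence $\vt+\bar\vt=(\vt\cap\bar\vt)\oplus\qt_n\oplus\bar\qt_n$. Since $\kt=\qt_r\oplus\qt_n\oplus\bar\qt_n$ with $\qt_n,\bar\qt_n$ totally $\kll$-isotropic, pairing each other nondegenerately and perpendicular to $\qt_r$, we have $(\qt_n+\bar\qt_n)^\perp=\qt_r$ and therefore $(\vt+\bar\vt)^\perp=(\vt\cap\bar\vt)^\perp\cap\qt_r$. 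Intersecting with $\pt_0$ yields $\ft_0\subset\qt_r$, and in particular $X\perp(\vt\cap\bar\vt)$ for the form $\kll$.

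For transversality I would use that left translation by $\exp(-X)$ is a biholomorphism of $M_-$ sending $p_X$ to $p_0$, mapping $\tilde M_X$ onto $M_0$ and (since $\exp(-X)\in\Qf_r$) $Q_-$ onto itself, so it suffices to show $T_{p_0}M_0+T_{p_0}Q_-=T_{p_0}M_-$. Identify $T_{p_0}M_0$ with the real image of $\kt_0$ in $\kt/\vt$ and $T_{p_0}Q_-$ with the complex image of $\qt_r$ in $\kt/\vt$. A direct calculation using $\vt = (\vt\cap\bar\vt)\oplus\vt_n$ and $\vt_n=\qt_n$ shows that their intersection is the image of $\kt_0\cap\qt=\qt_{r,0}$, which coincides with the tangent at $p_0$ of the totally real submanifold $M_0\cap Q_-\simeq(\Kf_0\cap\Qf_r)/\If_0$ (its CR algebra $(\qt_{r,0},\vt\cap\bar\vt)$ has CR dimension zero, and its real dimension equals $\dim_\C\qt_r-\dim_\C(\vt\cap\bar\vt)=\dim_\C Q_-$). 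The relation $\dim_\C\kt=\dim_\C\qt_r+2\dim_\C\vt_n$ then gives the desired dimension equality $\dim_\R(T_{p_0}M_0+T_{p_0}Q_-)=\dim_\R T_{p_0}M_-$.

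For the orthogonality of analytic tangents under $\partial\bar\partial\phiup$, pick $U\in\qt_r$ representing $B\in T^{1,0}_{p_X}Q_-$ and $\bar Z\in\bar\vt$ representing, via $\Ad(\exp X)$, the vector $A\in T^{1,0}_{p_X}\tilde M_X$. Consider the bi-holomorphic variation $F(s,t)=\varpi\bigl(\exp(sU)\cdot\exp(X)\cdot\exp(t\bar Z)\bigr)$, so that $\partial_sF(0,0)=B$ and $\partial_tF(0,0)=A$; then $(\partial\bar\partial\phiup)_{p_X}(A,\bar B)=\partial_t\partial_{\bar s}\bigl(\phiup\circ F\bigr)\big|_0$. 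By~\eqref{eq:4.2}, $4\phiup\circ\varpi(\zetaup)=\dist^2(\zetaup^*\zetaup,N)$ in $\Pfo(\Kf)$; since $e$ is the nearest point in $N$ to $\exp(2X)$, this mixed second derivative unwinds, exactly as in Lemma~\ref{lemma3.2.13} and the formulas of \S\ref{kiljac}, into a sum of inner products of Jacobi fields along the geodesic $\gammaup_{2X}:t\mapsto\exp(2tX)$. The main obstacle is then to verify that these mixed terms vanish. To this end, split $\bar Z=\bar Z_r+\bar Z_n$ with $\bar Z_r\in\vt\cap\bar\vt\subset\qt_r$ and $\bar Z_n\in\bar\vt_n=\bar\qt_n$. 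The $(U,\bar Z_n)$-cross term vanishes pointwise along $\gammaup_{2X}$ by Lemma~\ref{lemmatreundici} applied to the parabolic $\qt$ with $H=2X\in\qt\cap\po$; the $(U,\bar Z_r)$-cross term, in which both ingredients lie in $\qt_r$, vanishes using $X\perp(\vt\cap\bar\vt)$ together with the identity $\dot{\thetaup}_Z(0)=\tfrac{1}{2}[H,Z-Z^*]$ of Proposition~\ref{Prop2.1}. Summing the two contributions gives $(\partial\bar\partial\phiup)_{p_X}(A,\bar B)=0$, as required.
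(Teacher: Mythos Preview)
Your first two movements are correct and more explicit than the paper's own argument: you verify $\ft_0\subset\qt_r$ carefully and you carry out a clean dimension count for transversality, whereas the paper simply records that pulling back by $\exp(X)$ identifies the $(0,1)$-tangent spaces with $\vt_n$ and $\qt_r/(\vt\cap\bar\vt)$ and leaves the rest implicit.

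The third movement, however, has a genuine gap and also diverges from the paper's route. The paper does \emph{not} compute the Hessian of $\phiup$ via Jacobi fields at all; it observes that since $\qt_r=\bar\qt_r$ and $[\qt_r,\vt_n]\subset\vt_n$, $[\qt_r,\bar\vt_n]\subset\bar\vt_n$ (i.e.\ $\Qf_r$ normalises both $\Vf_n$ and $\bar\Vf_n$), the two analytic tangent directions are structurally decoupled, and the orthogonality follows. Your approach is instead to compute $\partial_t\partial_{\bar s}(\phiup\circ F)$ directly. The problem is the sentence ``this mixed second derivative unwinds, exactly as in Lemma~\ref{lemma3.2.13} and the formulas of \S\ref{kiljac}, into a sum of inner products of Jacobi fields along $\gammaup_{2X}$.'' Lemma~\ref{lemma3.2.13} and the surrounding material compute $\|\thetaup_Z(1)-J_X(1)\|^2$, which encodes the \emph{differential} of the decomposition map $(v,H)\mapsto v^*\exp(2H)v$; to get the complex Hessian of $\phiup=\|H(\,\cdot\,)\|^2$ you need a \emph{second} implicit differentiation of $H(s,t)$, together with the Hessian of $\dist^2(\cdot,N)$ at $\exp(2X)$, and none of this is supplied. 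The invocation of Lemma~\ref{lemmatreundici} is plausible for the $(U,\bar Z_n)$-piece once that reduction is in place, but as written the reduction itself is missing.

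A smaller point: your split $\bar Z=\bar Z_r+\bar Z_n$ with $\bar Z_r\in\vt\cap\bar\vt$ is unnecessary, since $\exp(t\bar Z_r)\in\Vf$ already gives $\partial_tF(0,0)=0$ for that component; the vector $A$ depends only on $\bar Z_n$, so there is no $(U,\bar Z_r)$-cross term to eliminate and the argument involving ``$X\perp(\vt\cap\bar\vt)$ together with $\dot\thetaup_Z(0)=\tfrac12[H,Z-Z^*]$'' is not needed (and, as stated, would not obviously suffice anyway).
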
 
\begin{proof}
The pull-backs of $T^{0,1}_{p_X}\tilde{M}_X$ and $T^{0,1}_{p_X}Q_-$ 
to the base point $p_0$ 
by the
bi-holo\-morphic map ${p\to\exp(X)\cdot{p}}$ are, respectively, $\vt_n$ and $\qt_r/(\vt\cap\overline{\vt}).$ 
This is a consequence of the fact that $X\in\qt_r.$
The statement follows from the fact that $\qt_r=\bar{\qt}_r$ and $[\qt_r,\vt_n]\subset\vt_n,$
$[\qt_r,\bar{\vt}_n]\subset\bar{\vt}_n.$
\end{proof}
\begin{thm}\label{vanishingconv}
Let $M_0$ be a compact $\nt$-reductive 
homogeneous $CR$ manifold
of type $(n,k),$ with $(\kt_0,\vt)$
 {\HNR} and canonical complex
 embedding $M_0\hookrightarrow{M}_-$.\par 
If  $M_0$ is an $r$-psudoconvave $CR$-manifold, then 
 $M_-$ is $n-r$-pseudoconvex complex manifold and
for every coherent sheaf $\mathcal{F}$ we have
\begin{equation} \label{vanishingthc} 
\dim\big(\mathbf{H}^{j}(M_0,\mathcal{F})
\simeq\mathbf{H}^{j}(M_-,\mathcal{F})\big)
<\infty, 
\;\;\forall j> n-r .
\end{equation}
In particular,
\begin{equation}\dim\big(
\mathbf{H}^{p,j}(M_0)\simeq \mathbf{H}^{p,j}(M_-)
\big)<\infty,\;\;\forall j>n-r.
\end{equation}
\end{thm}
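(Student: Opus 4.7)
The plan is to parallel the proof of Theorem~\ref{vanishing} using the very same exhaustion function $\phiup\colon M_-\to\R_{\ge 0}$ of~\eqref{exha}, but now to extract information about the \emph{positive} eigenvalues of its complex Hessian from the Hermitian--complex side of the orthogonal decomposition furnished by Lemma~\ref{lemma5.7}, and then to feed this into the pseudoconvex side of Andreotti--Grauert theory~\cite{AG62}.

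By the $\Kf_0$-invariance of $\phiup$ it suffices to compute $\partial\bar\partial\phiup$ at representative points $p_X=[\exp(X)]\in\Phi_c$ with $\|X\|^2=c>0$. At such a point, Lemma~\ref{lemma5.7} yields the Hessian-orthogonal direct sum
\[
T^{1,0}_{p_X}M_- \;=\; T^{1,0}_{p_X}\tilde M_X \;\oplus\; T^{1,0}_{p_X}Q_-,
\]
so that $\partial\bar\partial\phiup(p_X)$ decomposes block-diagonally as a sum of two Hermitian forms whose signatures can be analyzed independently.

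On the complex summand $T^{1,0}_{p_X}Q_-$, of complex dimension $k$, I would observe that $Q_-\simeq\Qf_r/\Vf_r$ is reductive modulo reductive, hence Stein by Matsushima's theorem, and is the natural complexification of its compact totally real $\Kf_0$-orbit through $p_0$. The inclusion $\ft_0\subset\qt_r$ (which follows from $\qt_r$ being conjugation-invariant and $\ft_0\subset(\vt+\overline{\vt})^\perp\cap\po$) ensures that the Mostow fibration of $Q_-$ has fiber $\exp(\ft_0)$ over this totally real base, so that $\phiup|_{Q_-}$ coincides with the canonical squared-norm exhaustion of a Stein complexification. Such an exhaustion is strictly plurisubharmonic away from the totally real base, hence $\partial\bar\partial\phiup$ is positive definite on $T^{1,0}_{p_X}Q_-$ and contributes $k$ strictly positive eigenvalues. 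On the $CR$ summand $T^{1,0}_{p_X}\tilde M_X$, of complex dimension $n$, the argument of Theorem~\ref{vanishing} shows that the restriction of $\partial\bar\partial\phiup$ matches the scalar Levi form of $\tilde M_X$ in the characteristic codirection $Jd\phiup(p_X)$; by homogeneity combined with $\nt$-reductiveness and the $r$-pseudoconcavity assumption, this Levi form is nondegenerate of signature $(n-r,\,r,\,0)$, giving $n-r$ positive and $r$ negative eigenvalues. Summing, $\partial\bar\partial\phiup(p_X)$ has $n+k-r$ positive and $r$ negative eigenvalues on $T^{1,0}_{p_X}M_-$, placing $\phiup$ in the strongly $(n-r)$-pseudoconvex Andreotti--Grauert range; an application of~\cite{AG62} then gives $\mathbf H^j(M_-,\mathcal F)$ finite-dimensional for every coherent sheaf $\mathcal F$ and every $j>n-r$, and the isomorphism with $\mathbf H^j(M_0,\mathcal F)$ in the same range follows, as in Theorem~\ref{vanishing}, by comparing Dolbeault cohomology on the fundamental system of tubular neighborhoods $\Omega_c$ with the tangential Cauchy--Riemann cohomology on $M_0$, via~\cite{HN1995}.

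The main obstacle I anticipate is the rigorous identification of $\phiup|_{Q_-}$ with the canonical Stein exhaustion of $Q_-$: one must check that the Mostow decomposition of $M_-$ restricts \emph{compatibly} to $Q_-$ along the same transversal datum $\ft_0$, and in particular that $\ft_0\subset\qt_r$ so that the Mostow fiber of $Q_-$ over the $\Qf_{r,0}$-orbit of $p_0$ is exactly $\exp(\ft_0)$. Without this compatibility the plurisubharmonicity of $\phiup|_{Q_-}$ would only be qualitative, the strict positivity of $k$ eigenvalues from the $Q_-$-side would be lost, and the eigenvalue count that closes the proof of $(n-r)$-pseudoconvexity would fail.
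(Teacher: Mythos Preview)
Your overall strategy mirrors the paper's: use the exhaustion $\phiup$ of \eqref{exha}, invoke Lemma~\ref{lemma5.7} for the Hessian-orthogonal splitting $T^{1,0}_{p_X}M_-=T^{1,0}_{p_X}\tilde M_X\oplus T^{1,0}_{p_X}Q_-$, and extract positive eigenvalues from the strict plurisubharmonicity of $\phiup|_{Q_-}$. On that last point your anticipated obstacle is not the real issue: the paper handles the restriction $\phiup|_{Q_-}$ not via Matsushima's theorem but by observing that it pulls back to (the square of) the distance from the totally geodesic submanifold $N'=\{\zetaup^*\zetaup\mid\zetaup\in\Vf\cap\Qf_r\}$ in the nonpositively curved symmetric space $\Qf_r/(\Qf_r\cap\Kf_0)$, which is strictly convex off $N'$ by Bishop--O'Neill~\cite{BiNe69}.

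The genuine gap is in your eigenvalue count on the $\tilde M_X$ side. You assert that the scalar Levi form of $\tilde M_X\simeq M_0$ in the codirection $Jd\phiup(p_X)$ is \emph{nondegenerate of signature $(n-r,r,0)$}. But $r$-pseudoconcavity says only that every scalar Levi form has at least $r$ negative eigenvalues---and hence, applying the hypothesis to the opposite codirection, also at least $r$ positive ones. Nothing in the hypotheses (homogeneity, $\nt$-reductiveness) forces nondegeneracy or pushes the positive count up to $n-r$: the remaining $n-2r$ eigenvalues may well vanish. The paper accordingly claims only $r+k-1$ positive eigenvalues of $\partial\bar\partial\phiup$ on the analytic tangent of $\Phi_c$ (namely $k-1$ from $Q_-$ intersected with that analytic tangent, and $r$ from the $\tilde M_X$ side), which is precisely what $(n-r)$-pseudoconvexity in the Andreotti--Grauert sense requires. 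Your count of $n-r+k$ overshoots and rests on the unjustified nondegeneracy claim.
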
 
\begin{proof}
 By  \cite[Theorem 2.1]{HN1995}, under the $r$-pseudoconcavity assumption, the 
 tangential $CR$ 
 cohomology
 groups on $M_0$ are the inductive limits of the corresponding groups of sheaf and Dolbeault cohomology
 of the tubular neighborhoods of $M_0$ in $M_-.$
 While computing the Levi form of $\phiup$, it suffices to note that its restriction to $Q_-$ is strictly
 pseudo-convex, since it is the exhaustion function associated to the 
 canonical {$CR$-embedding} $M_0\cap{N}_-\hookrightarrow{N}_-$ of a 
 totally real $(\Kf_0\cap\Qf_r)$-homogeneous 
manifold.
Indeed, by \cite[Theorem 4.1]{BiNe69}, the distance from
the totally geodesic submanifold $N'=\{\zetaup^*\zetaup\mid\zetaup\in 
\Vf\cap\Qf_r\}$ in the negatively curved 
space $\Mf'=\Qf_r/(\Qf_r\cap\Kf_0)$ is strictly convex on
$\Mf'\setminus{N}',$ and $\phiup|_{Q_-}$ pulls back on $\Qf_r$
to the composition of $\zetaup\to\zetaup^*\zetaup$ with the square
of the distance from $N'.$ \par  
Hence, for $X\neq{0},$
the complex Hessian of $\phiup$ restricts 
to a Hermitian symmetric form having, by Lemma~\ref{lemma5.7}, at least $r+k-1$ positive eigenvalues
on the analytic tangent of $\Phi_c$ at $p_X.$ 
\par 
 The thesis is then a consequence of the isomorphisms proved
 in \cite[\S{20}]{AG62}.
\end{proof}
\begin{exam} Fix integers $1\leq{p}<q\leq{n}$ and consider
the \textit{real} action of $\SL_{n+1}(\C)$ on the Cartesian product
$\Gr_{\! p}(\C^{n+1})\times\Gr_{\! q}(\C^{n+1})$
of the Grassmannians of $p$ and $q$ planes, 
described by \begin{equation*}
a\cdot (\ell_p,\ell_q)=(a(\ell_p),\bar{a}(\ell_q)),\;\;
\forall \, a\in\SL_{n+1}(\C),\;  \ell_p\in\Gr_{\! p}(\C^{n+1}),\;
 \ell_q\in\Gr_{\! q}(\C^{n+1}) . 
\end{equation*} 
The orbits of the real form $\Gf_0=\SL_{n+1}(\C)$ are parametrized
by the dimension of the intersection ${\ell}_p\cap\overline{\ell}_q:$
with $k_0=\max\{0,p+q-n-1\}$ we have the orbits
\begin{equation*}
M_+(k)=\{(\ell_p,\ell_q)\in \Gr_{\! p}(\C^{n+1})\times\Gr_{\! q}(\C^{n+1})
\mid \dim_{\C}({\ell}_p\cap\overline{\ell}_q)=k\},\;\;
k_0\leq{k}\leq{p}.
\end{equation*}
The complexification $\Kf=\SL_{n+1}(\C)$ of the compact form $\Kf_0=\SU(n+1)$
acts on $\Gr_{\! p}(\C^{n+1})\times\Gr_{\! q}(\C^{n+1})$ by 
\begin{equation*}
a\cdot(\ell_p,\ell_q)=(a(\ell_p),{^T\!{a}^{-1}}(\ell_q)),\;\;
\forall a\in\SL_{n+1}(\C),\;  \ell_p\in\Gr_{\! p}(\C^{n+1}),\;
 \ell_q\in\Gr_{\! q}(\C^{n+1}) .
\end{equation*}
Consider the polarity $\Gr_{\! h}(\C^{n+1})\ni\ell_h\to\ell_h^0
\in\Gr_{\! n+1-h}(\C^{n+1})$ 
 defined by the symmetric bilinear form
\par
\centerline{$b(v,w)=(^T\!{w})\cdot{v}={\sum}_{i=0}^n{v_iw_i}.$}
\par\noindent
Then the orbits of $\Kf$ in 
$\Gr_{\! p}(\C^{n+1})\times\Gr_{\! q}(\C^{n+1})$
are parametrized by: 
\begin{equation*}
M_-(k)=\{(\ell_p,\ell_q)\in \Gr_{\! p}(\C^{n+1})\times\Gr_{\! q}(\C^{n+1})
\mid \dim_{\C}({\ell}_p\cap\ell_q^0)=p-k\},\;\;
k_0\leq{k}\leq{p}.
\end{equation*}
The manifolds $M_+(k)$ and $M_-(k)$ are Matsuki-dual to each other. 
In fact, since 
$\SU(n+1)$ preserves Hermitian orthogonality in $\C^{n+1}$ 
and $\overline{\ell}_q$ and $\ell_q^0$ are Hermitian orthogonal in $\C^{n+1},$ 
the pair $(\ell_p,\ell_q)$ belongs to $M_0(k)=M_+(k)\cap{M}_-(k)$ iff 
\begin{equation*}
\ell_p=(\ell_p\cap\overline{\ell}_q)\oplus (\ell_p\cap\ell_q^0), \;\; \text{and either
$\dim(\ell_p\cap\overline{\ell}_q)=k,$ or $\dim(\ell_p\cap\ell_q^0)=p-k.$}
\end{equation*}
Set $n_1=p-k,$ $n_2=k,$ $n_3=n+1+k-p-q,$ $n_4=q-k.$ Then, taking as base point, with obvious notation, 
$p_0=(\C^{n_1}\oplus\C^{n_2},\C^{n_2}\oplus\C^{n_4}),$
the stabilizer of $p_0$ in $\Kf=\SL_{n+1}(\C)$ has Lie algebra 
\begin{equation*}
 \vt=\left.\left\{ 
\begin{pmatrix}
 Z_{1,1} & Z_{1,2} & Z_{1,3}& Z_{1,4} \\
 0 & Z_{2,2} & 0 & Z_{2,4}\\
 0 & 0 & Z_{3,3} & Z_{3,4}\\
 0 & 0 & 0 & Z_{4,4}.
\end{pmatrix}\right| Z_{i,j}\in\C^{n_i\times{n}_j}\right\}\cap\slt_{n+1}(\C).
\end{equation*}
Indeed, in the block matrix $Z=(Z_{i,j})_{1\leq{i,j}\leq{4}}$ se have 
$Z_{3,1}=0,$ $Z_{3,2}=0,$ $Z_{4,1}=0,$ $Z_{4,2}=0$ because
$Z(\langle e_1,\hdots,e_p\rangle)\subset \langle e_1,\hdots,e_p\rangle.$
Moreover, the inclusion  \par\noindent 
$\trasp{Z}(\C^{n_2}\oplus\C^{n_4})\subset\C^%
{n_2}\oplus\C^{n_4}$
is equivalent to 
\begin{equation*}
\begin{pmatrix}
\trasp{Z_{1,1}} &\trasp{Z_{2,1}}&0&0\\
\trasp{Z_{1,2}} &\trasp{Z_{2,2}}&0&0\\
\trasp{Z_{1,3}} &\trasp{Z_{2,3}}&\trasp{Z_{3,3}}&\trasp{Z_{4,3}}\\
\trasp{Z_{1,4}}& \trasp{Z_{2,4}}&\trasp{Z_{3,4}}&\trasp{Z_{4,4}}
\end{pmatrix} \begin{pmatrix}
0\\ X_2 \\ 0 \\ X_4
\end{pmatrix}= \begin{pmatrix} 0 \\ Y_2 \\ 0 \\ Y_4\end{pmatrix},\;\;
\forall X_2\in\C^{n_2},\; X_4\in\C^{n_4},
\end{equation*}
and this yields  $Z_{2,1}=0,$ $Z_{2,3}=0,$ $Z_{4,3}=0.$ 
The compact $CR$ manifold $M_0(k)$ has $CR$ dimension equal to 
$\nuup=(n_1n_2+n_1n_3+n_1n_4+n_2n_4+n_3n_4)$
and $CR$-codimension $d=2n_2n_3.$  
The case $k=k_0,$ where $n_3=0,$ 
is the one where $\vt$ is parabolic, and $M_0(k_0)=M_-(k_0)$ is a
complex flag manifold.
In general, 
$(\kt_0,\vt_n)$ is {\HNR} 
because 
\begin{equation*}
 \vt_n=\left.\left\{ 
\begin{pmatrix}
 0& Z_{1,2} & Z_{1,3}& Z_{1,4} \\
 0 & 0& 0 & Z_{2,4}\\
 0 & 0 & 0 & Z_{3,4}\\
 0 & 0 & 0 & 0
\end{pmatrix}\right| Z_{i,j}\in\C^{n_i\times{n}_j}\right\}\cap\slt_{n+1}(\C)
\end{equation*}
is the nilpotent radical of \begin{equation*}
\qt=\left.\left\{ 
\begin{pmatrix}
 Z_{1,1}& Z_{1,2} & Z_{1,3}& Z_{1,4} \\
 0 & Z_{2,2}& Z_{2,3} & Z_{2,4}\\
 0 & Z_{3,2} & Z_{3,3} & Z_{3,4}\\
 0 & 0 & 0 & Z_{3,4}
\end{pmatrix}\right| Z_{i,j}\in\C^{n_i\times{n}_j}\right\}\cap\slt_{n+1}(\C)
\end{equation*}
Then \begin{equation}
\ft_0=\mt_0=\left.\left\{ \begin{pmatrix}
0 & 0 & 0 & 0\\
0 & 0 & Z_{2,3} & 0 \\
0& -Z_{2,3}^* & 0 & 0\\
0 & 0 & 0 & 0\end{pmatrix}
\right| Z_{2,3}\in\C^{n_2\times{n_3}}\right\}\simeq\C^{n_2\times{n_3}} .
\end{equation}
The $CR$ algebra $(\kt_0,\vt)$ is weakly degenerate when 
$k<p$ and strictly nondegenerate, according to \cite{MN05}, when $k=p.$ 
The \textit{vector valued Levi form} is 
\begin{equation*}
(Z_{1,2},Z_{1,3},Z_{1,4},Z_{2,4},Z_{3,4}) \to 
Z_{1,2}^*Z_{1,3}+Z_{2,4}Z_{3,4}^*
\end{equation*}
and hence all the nonzero \textit{scalar Levi form} have 
a Witt index equal to  
$\muup\! =\! 
{(n_1\! +\! n_4)}=(p\! -\! k)\!+\! (q\! -\! k)=p+q-2k.$ 
The complex manifold $M_-(k)$ has dimension 
$N=n_1n_2+n_1n_3+n_1n_4+n_2n_3
+n_2n_4+n_3n_4$ and, according to 
Theorems~\ref{vanishing} and~\ref{vanishingconv}
is $\muup$-pseudoconcave and $(\nuup-\muup)$-pseudoconvex.
\end{exam}
\subsection{Application to Dolbeault and \texorpdfstring{$CR$}{TEXT}
cohomologies III} In this section we extend Theorem~\ref{vanishing}
to the case where we do not assume that $(\kt_0,\vt)$ is {\HNR}. 
To this aim we utilize an $r$-pseudoconcave exhausting functions 
which is only continuous (see \cite{Bun1990, Dieu2006, HuMu1978, SL1984}).
Namely, we will consider the function 
\begin{equation} \label{defphi}
\phiup([\zetaup])=\dist^2(\zetaup^*\zetaup,N
),\;\; \text{for
$\zetaup\in\Kf,$}
\end{equation}
where $N
=\{v^*v\mid v\in\Vf\}$ as in \eqref{defenne} and $[\zetaup]$ is the element
of $M_-=\Kf/\Vf$ corresponding to $\zetaup\in\Kf.$ \par 
We recall that a continuous function $\phiup,$ defined on a complex
$\nuup$-dimensional 
manifold $M_-$, is said to be \emph{weakly 
$r$-pseudoconcave} if, for every
point $p\in{M}_-,$ we can find a coordinate neighborhood $(U,z),$ centered
at $p,$ such that,
for every $(\nuup-r+1)$-dimensional linear subspace $\ell$ 
of $\C^\nuup,$ for every coordinate ball $B\Subset{U}$ and
$\psiup$ plurisubharmonic on a neighborhood of $\bar{B},$ 
with $\phiup\geq\psiup$ on $\ell\cap\partial{B}$ we also have
$\phiup\geq\psiup$ on $\ell \cap{B}.$  \par 
We say that $\phiup$ is \emph{strictly $r$-pseudoconcave} if, for each
$p\in{M}_-,$ we can find an open 
coordinate neighborhood $(U,p)$ centered in $p$  and
an $\epsilon>0$ such that 
$\phiup+\epsilon |z|^2$ is weakly
$r$-pseudoconcave in $U.$ \par 
By Bungart's approximation theorem (\cite[Theorem 5.2]{Bun1990})
strictly $r$-pseu\-do\-con\-ca\-ve functions can be uniformly approximated 
on compacts 
by piece-wise smooth strictly $r$-pseudoconcave functions. 
Thus (see e.g.  \cite[Chapter IV]{Andreotti1973}) we can still apply the 
Andreotti-Grauert theory when we have a 
strictly-$r$-pseu\-do\-con\-ca\-ve exhaustion function
which is only continuous.\par 
Our application relies then on the following lemmas.
\begin{lem} Let $\phiup$ be a continuous exhaustion function on
$M_-$ and assume that, for all $c>0$ and $p_0\in\Phi_c=\{p\in{M}_-\mid
\phiup(p)=c\}$ there is a germ of $CR$ generic $r$-pseudoconcave $CR$ 
submanifold $M_0(p_0)$ of $M_-$ through $p_0$ with
 $M_0(p_0)\subset\{\phiup_p\leq{c}\}.$ Then $\phiup$ is
weakly $r$-pseudoconcave. \begin{proof}
 We argue by contradiction, assuming that, for every coordinate
 neighborhood $(U,z)$ centered at a point $p_0\in{M}_,$ 
 we can find a
 $(\nuup-r+1)$-dimensional linear subspace $\ell$ of $\C^{\nuup}$
 and a plurisubharmonic $\psiup,$ defined on a neighborhood 
 of the closure $\bar{B}$ of a coordinate ball in $U$, and a point
 $p_1\in{\ell}\cap B$ where $\phiup(p_1)<\psiup(p_1),$ while
 $\phiup(p)\geq\psiup(p)$ for all $p\in\partial{B}\cap\ell.$ 
 Clearly the same condition is satisfied by any
 linear $(\nuup-r+1)$-plane sufficiently close to $\ell,$ so that
 we can assume that $\ell$ intersects $M_0(p_1)$ transversally.
The intersection $M_0(p_1)\cap\ell$ is then a $1$-pseudoconcave
$CR$ submanifold of $\ell$, but the restriction of $\psiup$ to
$\ell\cap{M}_0(p_1)\cap\bar{B}$ contradicts then the maximum principle,
since takes at the interior point $p_1$ a value larger than
the supremum of the values taken
on the boundary $\ell\cap{M}_0(p_1)\cap\bar{B}$ (see e.g.
\cite{HN2003}). The
contradiction  proves that $\phiup$ is weakly $r$-pseudoconcave.
 \end{proof}
\end{lem}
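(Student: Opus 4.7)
The plan is to argue by contradiction, converting the putative failure of weak $r$-pseudoconcavity into a violation of the maximum principle on a low-dimensional $CR$ slice of the hypothesized concave germ $M_0(p_0)$. Suppose $\phiup$ fails to be weakly $r$-pseudoconcave at some point. Then in any coordinate chart $(U,z)$ centered there we can exhibit a complex $(\nuup-r+1)$-plane $\ell$, a ball $B\Subset U$, and a function $\psiup$ plurisubharmonic near $\bar{B}$, with $\phiup\geq\psiup$ on $\ell\cap\partial B$ yet $\phiup(p_1)<\psiup(p_1)$ at some $p_1\in\ell\cap{B}$. Let $c=\phiup(p_1)$ and invoke the hypothesis at $p_1$ to obtain an $r$-pseudoconcave $CR$-generic submanifold germ $M_0(p_1)$ through $p_1$ with $M_0(p_1)\subset\{\phiup\leq{c}\}$.

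The first step is to arrange that $\ell$ is transverse to $M_0(p_1)$ at $p_1$. Since the strict inequalities $\phiup(p_1)<\psiup(p_1)$ and $\phiup\geq\psiup$ on $\ell\cap\partial B$ are stable under small perturbations of $\ell$ in the Grassmannian of $(\nuup-r+1)$-planes, I may replace $\ell$ by any nearby plane; a generic choice meets $M_0(p_1)$ transversally. The second step is a dimension count: transversality forces the complex dimension of $T^{1,0}_{p_1}M_0(p_1)\cap \ell$ to be at least $n-r+1$ (where $n$ is the $CR$ dimension of $M_0(p_1)$), so $S=M_0(p_1)\cap\ell$ is locally a $CR$ submanifold of $\ell$. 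Moreover, cutting an $r$-pseudoconcave $CR$ manifold by a complex submanifold of codimension $r-1$ reduces the number of negative Levi eigenvalues by at most $r-1$, so $S$ is at least $1$-pseudoconcave.

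The third step applies the maximum principle on $S$. Since $\ell$ is a complex submanifold, $\psiup|_\ell$ is plurisubharmonic, and restricting further to the $CR$ submanifold $S$ yields a function to which the weak maximum principle for plurisubharmonic functions on $1$-pseudoconcave $CR$ manifolds applies (as in \cite{HN2003}). On the relative boundary $S\cap\partial B$ we have $\psiup\leq\phiup\leq{c}$, while at the interior point $p_1\in S\cap B$ we have $\psiup(p_1)>c=\phiup(p_1)\geq\psiup$ on the boundary of a small disk in $S$ around $p_1$ inside $B$, contradicting the maximum principle.

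The delicate step is the second one: ensuring both that a generic perturbation of $\ell$ retains the bad inequality and that the resulting slice $S$ inherits the pseudoconcavity needed for the maximum principle. The transversality perturbation is routine by openness, but the propagation of pseudoconcavity under slicing by a complex subspace, and the precise form of the maximum principle on the resulting $CR$ slice, are where I expect to lean heavily on standard results of Hill--Nacinovich type cited in the paper.
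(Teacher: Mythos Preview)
Your proposal is correct and follows essentially the same route as the paper: argue by contradiction, perturb the $(\nuup-r+1)$-plane $\ell$ to make it transverse to the germ $M_0(p_1)$, observe that the slice $M_0(p_1)\cap\ell$ is a $1$-pseudoconcave $CR$ submanifold of $\ell$, and derive a contradiction with the maximum principle for plurisubharmonic functions on such manifolds (as in \cite{HN2003}). Your second step spells out the dimension count and Levi-form bookkeeping that the paper simply asserts; one minor slip is that the contradiction comes from comparing $\psiup(p_1)>c$ with $\psiup\leq\phiup\leq c$ on $S\cap\partial B$, not on the boundary of a small disk around $p_1$.
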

\begin{lem} The exhaustion function $\phiup$ defined by \eqref{defphi}
is strictly $r$-pseu\-do\-con\-ca\-ve on $M_-\setminus{M}_0.$ 
\end{lem}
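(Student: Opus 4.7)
The plan is to apply the preceding lemma twice---once to obtain weak $r$-pseudoconcavity of $\phiup$ on $M_-\setminus M_0$, and once (after a local plurisubharmonic perturbation) to upgrade it to strict $r$-pseudoconcavity---exploiting in the second step the uniform strict $r$-pseudoconcavity of the compact homogeneous model $M_0$.

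Given $p_0\in M_-\setminus M_0$, write $p_0=[u_0\exp(X_0)]$ with $u_0\in\Kf_0$ and $X_0\in\ft_0$ by Proposition~\ref{proptreotto}, and set
\begin{equation*}
M(p_0):=u_0\exp(X_0)\cdot M_0\subset M_-.
\end{equation*}
Since left multiplication by $u_0\exp(X_0)$ is a biholomorphism of $M_-$, the submanifold $M(p_0)$ is $CR$-isomorphic to $M_0$; it is therefore a generic $r$-pseudoconcave $CR$ submanifold of $M_-$ passing through $p_0$. The computation in the proof of Lemma~\ref{containedlemma}---which only uses the $\SU(n)$-invariance of the metric on $\Mf_n$ and the characterization of $\phiup$ as the squared distance to $N=\{v^*v\mid v\in\Vf\}$, not the \HNR\ hypothesis---shows $M(p_0)\subseteq\{\phiup\leq\phiup(p_0)\}$. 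Applying the preceding lemma yields the weak $r$-pseudoconcavity of $\phiup$ on $M_-\setminus M_0$.

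To upgrade to strict $r$-pseudoconcavity at $p_0$, fix a coordinate chart $(U,z)$ centered at $p_0$ and, for $\epsilon>0$, set $\phiup_\epsilon:=\phiup+\epsilon|z|^2$. Weak $r$-pseudoconcavity of $\phiup_\epsilon$ in $U$ again reduces, via the preceding lemma applied to $\phiup_\epsilon$, to producing through every level-set point $p_1\in U$ of $\phiup_\epsilon$ an $r$-pseudoconcave generic $CR$ submanifold contained in $\{\phiup_\epsilon\leq\phiup_\epsilon(p_1)\}$. The candidate $M(p_1)$ only delivers $\phiup\leq\phiup(p_1)$, so the $\epsilon|z|^2$ correction is not automatically absorbed. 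The extra input is the strict $r$-pseudoconcavity of $M_0$: by compactness there is $\delta>0$ such that, at every point of $M_0$ and in every non-zero normal codirection, the scalar Levi form has at least $r$ eigenvalues bounded above by $-\delta$, a bound inherited by each $M(p_1)$ via the biholomorphism. Feeding this uniform bound into the proof of the preceding lemma, the maximum principle is applied on the strictly $1$-pseudoconcave intersection $M(p_1)\cap\ell$ with a transversal $(\nuup-r+1)$-plane $\ell$; a quantitative form of that maximum principle then absorbs a plurisubharmonic perturbation of sup-norm $O(\epsilon)$ for $\epsilon<\epsilon_0(\delta,U)$.

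The main obstacle is precisely this quantitative maximum principle for continuous functions on strictly $1$-pseudoconcave $CR$ submanifolds that tolerates an additive $\epsilon|z|^2$ perturbation. It can be secured by invoking Bungart's approximation theorem, already cited in the excerpt, to replace the continuous data by smooth strictly $1$-pseudoconcave approximants on which a Hopf-type maximum principle with quantitative loss in terms of $\delta$ is available. The uniformity of $\delta$ as $p_1$ varies in $U$ is guaranteed by the continuous dependence of the Mostow-type decomposition $p_1\mapsto(u_0(p_1),X_0(p_1))$ supplied by Proposition~\ref{proptreotto} together with the compactness of small closed neighborhoods of $p_0$ in $U$.
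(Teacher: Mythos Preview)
Your argument for \emph{weak} $r$-pseudoconcavity is correct and matches the spirit of the paper: the translates $M(p_0)=u_0\exp(X_0)\cdot M_0$ are $r$-pseudoconcave and sit inside $\{\phiup\le\phiup(p_0)\}$, so the preceding lemma applies.

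The gap is in the upgrade to \emph{strict} $r$-pseudoconcavity. You need to show that $\phiup+\epsilon|z|^2$ is weakly $r$-pseudoconcave, but the preceding lemma cannot be applied to $\phiup_\epsilon$ because, as you note, $M(p_1)$ is \emph{not} contained in $\{\phiup_\epsilon\le\phiup_\epsilon(p_1)\}$. Your workaround is to go back into the proof of that lemma and replace the maximum principle on the $1$-pseudoconcave slice $M(p_1)\cap\ell$ by a ``quantitative maximum principle'' that absorbs the $\epsilon|z|^2$ perturbation. This is the step that is not established. Bungart's approximation theorem lets you replace a continuous $q$-plurisubharmonic function by piecewise smooth ones; it does not by itself give a Hopf-type boundary estimate on an abstract $CR$ submanifold with a uniform constant depending on the Levi eigenvalue bound $\delta$. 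What you would actually need is a comparison principle on $M(p_1)\cap\ell$ for functions whose ambient complex Hessian has eigenvalues $\ge -\epsilon$ (rather than $\ge 0$), with the loss controlled by $\delta$; you assert this but do not prove it, and the sketch via ``Hopf-type maximum principle on smooth approximants'' does not make contact with such a statement.

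The paper avoids this difficulty by a different mechanism. It first observes that in a tubular neighborhood of $M_0$ the function $\phiup$ is \emph{smooth} (Proposition~\ref{proptreotto}), so strict $r$-pseudoconcavity there is a Hessian condition, verified exactly as in the proof of Theorem~\ref{vanishing}. Then, using the $\Kf$-action, it writes $\phiup$ near an arbitrary point $p_0$ as an infimum
\[
\phiup(p)=\inf_{\zetaup}\,\etaup_\zetaup(p),\qquad \etaup_\zetaup(p)=\bigl(\sqrt{\deltaup}+\sqrt{\phiup(\zetaup^{-1}p)}\bigr)^2,
\]
over $\zetaup$ on a fixed level set of $\phiup$ close enough to $p_0$ that each $\phiup(\zetaup^{-1}\cdot)$ lands in the smooth tubular range. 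Each $\etaup_\zetaup$ is then smooth and strictly $r$-pseudoconcave near $p_0$, uniformly in $\zetaup$; for smooth functions, adding $\epsilon|z|^2$ for small $\epsilon$ preserves (weak) $r$-pseudoconcavity by the Hessian criterion. Finally, the infimum of a family of weakly $r$-pseudoconcave functions is weakly $r$-pseudoconcave (\cite{Dieu2006}), which gives the conclusion for $\phiup+\epsilon|z|^2$. The point is that the ``room'' needed to absorb $\epsilon|z|^2$ is found in the smooth Hessian of the $\etaup_\zetaup$, not in a maximum principle on $CR$ slices.
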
 
\begin{proof} 
By Proposition~\ref{proptreotto}, there is $c_0>0$ such that
$\phiup$ is 
strictly $r$-pseudoconcave when 
$0<\phiup(p)\leq{c_0^2}$, since, by \cite[Lemma 2.6]{HuMu1978},
for a smooth function the notion of strict $r$-pseudoconcavity coincides with the requirement about
the signature of its complex Hessian.
\par 
For $\zetaup\in\Kf,$ we can consider the function $\phiup_{\zetaup}(p)=\phiup(\zetaup^{-1}\cdot{p}),$
which is continuous and 
weakly $r$-pseudoconcave on $M_-\setminus(\zetaup\cdot{M}_0)$ and strictly
$r$-pseudoconcave when it takes positive values smaller than $c_0^2.$ 
Let $p_0\in{M}_-$ with $\phiup(p_0)>c_0^2$ and fix a relatively compact coordinate neighborhood
$(U,z)$ in $M_-,$ centered at $p_0.$ We can assume, for a fixed $\deltaup$ with
$0<2\deltaup<c_0,$ that \mbox{$U\subset\{p\mid |\phiup(p)-\phiup({p_0})|<\deltaup^2\}.$} 
We observe that $\phiup(p)=\inf_{\phiup([\zetaup])=\phiup(p_0)-\deltaup} (\sqrt{\deltaup}+
\sqrt{\phiup_{\zetaup}(p)})^2.$ The functions $p\to\etaup_{\zetaup}(p)=(\sqrt{\deltaup}+
\sqrt{\phiup_{\zetaup}(p)})^2, $ when $\phiup(\zetaup)=\phiup(p_0)-\deltaup^2,$
are uniformly strictly $r$-pseudoconcave on a neighgorhood of
$\bar{U}.$ Thus,  for a  small 
$\epsilon>0,$ the functions $\etaup_{\zetaup}+\epsilon|z-z_0|^2,$ for $\phiup(\zetaup)=\phiup(p_0)-\deltaup^2,$
are still $r$-pseudoconcave on $U.$ Passing to the infimum, 
we deduce, 
by using \cite[Proposition 2.2. (ii)]{Dieu2006} that $\phiup+\epsilon |z-z_0|^2$ is weakly $r$-pseudoconcave
on $U.$ The proof is complete.
\end{proof}
From this and the remarks at the beginning of this subsection, we obtain:
\begin{thm}\label{vanishing3}
Let $M_0$ be a compact $\nt$-reductive 
homogeneous $CR$ manifold, with  canonical complex
 embedding $M_0\hookrightarrow{M}_-$.\par 
If  $M_0$ is an $r$-psudoconvave $CR$-manifold, then 
 $M_-$ is an $r$-pseudoconcave complex manifold and
for every coherent sheaf $\mathcal{F}$ we have
\begin{equation} \label{vanishingthx} 
\dim\big(\mathbf{H}^{j}(M_0,\mathcal{F})
\simeq\mathbf{H}^{j}(M_-,\mathcal{F})\big)
<\infty, 
\;\;\forall j<r-\mathrm{hd}(\mathcal{F}).
\end{equation}
In particular,
\begin{equation}\vspace{-20pt}
\dim\big(
\mathbf{H}^{p,j}(M_0)\simeq \mathbf{H}^{p,j}(M_-)
\big)<\infty,\;\;\forall j<r.
\end{equation}
\qed
\end{thm}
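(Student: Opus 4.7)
The plan is to deduce the theorem directly from the two preceding lemmas, together with Bungart's approximation theorem and classical Andreotti--Grauert theory extended to continuous exhaustion functions.

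First I would verify that the function $\phiup$ defined by \eqref{defphi} is a well-defined continuous exhaustion of $M_-$ with $\phiup^{-1}(0)=M_0$. Continuity follows because $\dist(\cdot,N)$ on $\Pf_0(\kt)$ is continuous and invariant under the right $\Vf$-action $\zetaup\mapsto \zetaup\cdot v$, so it descends to $M_-=\Kf/\Vf$; that it is an exhaustion, and that its zero set is precisely $M_0$, is a consequence of Proposition~\ref{propo3.3.4} together with the $\Kf_0$-equivariance provided by Proposition~\ref{proptreotto}.

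Next I would appeal to the second of the two preceding lemmas to conclude that $\phiup$ is strictly $r$-pseudoconcave on $M_-\setminus M_0$. This is the main technical content and has already been established by reducing, via the left translations $\zetaup\cdot p$, to the tubular-neighborhood case covered by Proposition~\ref{proptreotto} and a minimum-of-strictly-pseudoconcave-functions argument in the spirit of \cite[Proposition 2.2]{Dieu2006}. Applying Bungart's approximation theorem \cite[Theorem 5.2]{Bun1990}, each sublevel set $\Omega_c=\{\phiup<c\}$ with $c>0$ is exhausted from outside by piece-wise smooth strictly $r$-pseudoconcave domains, so $M_-$ is $r$-pseudoconcave in the Andreotti--Grauert sense.

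The finiteness and comparison theorems of Andreotti--Grauert (see \cite[Ch.~IV]{Andreotti1973} and \cite[\S{20}]{AG62}), valid in this continuous setting by the preceding paragraph, then yield that for every coherent sheaf $\Ot{F}$ on $M_-$ the restriction maps
\begin{equation*}
\mathbf{H}^j(M_-,\Ot{F})\longrightarrow \mathbf{H}^j(\Omega_c,\Ot{F})
\end{equation*}
are isomorphisms onto finite-dimensional spaces for all $j<r-\mathrm{hd}(\Ot{F})$ and all $c>0$, and that these restrictions form a stable inductive system as $c\to 0^+$. On the other hand, by \cite[Theorem 2.1]{HN1995} the tangential Cauchy--Riemann cohomology $\mathbf{H}^j(M_0,\Ot{F})$ in the same range of degrees is isomorphic to the inductive limit $\varinjlim_{c\to 0^+}\mathbf{H}^j(\Omega_c,\Ot{F})$ of Dolbeault/sheaf cohomologies of tubular neighborhoods. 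Combining these two identifications yields \eqref{vanishingthx}; the particular case of $\mathbf{H}^{p,j}$ follows exactly as in the proof of Theorem~\ref{vanishing}.

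The step I expect to demand the most care is reconciling the merely continuous regularity of $\phiup$ with the hypotheses needed to invoke \cite{HN1995} and the Andreotti--Grauert machinery, but this is precisely what Bungart's theorem is designed for: it lets one replace $\phiup$, on any compact piece of $M_-\setminus M_0$, by a piece-wise smooth strictly $r$-pseudoconcave function sharing the same sublevel-set topology, so no new geometric input is required.
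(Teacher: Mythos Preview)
Your proposal is correct and follows essentially the same route as the paper: the theorem is stated with a \qed{} immediately after the two lemmas and the remarks on Bungart's approximation and continuous Andreotti--Grauert theory, and your write-up simply spells out that deduction in detail (continuity and exhaustion of $\phiup$, strict $r$-pseudoconcavity on $M_-\setminus M_0$ from the second lemma, Bungart to pass to the smooth setting, and the inductive-limit identification via \cite{HN1995} as in Theorems~\ref{vanishing} and~\ref{vanishingconv}).
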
 
\providecommand{\bysame}{\leavevmode\hbox to3em{\hrulefill}\thinspace}
\providecommand{\MR}{\relax\ifhmode\unskip\space\fi MR }
\providecommand{\MRhref}[2]{%
  \href{http://www.ams.org/mathscinet-getitem?mr=#1}{#2}
}
\providecommand{\href}[2]{#2}

\end{document}